\documentclass[sort&compress]{elsarticle}
\usepackage{amsmath,amssymb,amsthm,multirow,nicematrix,tikz,rotating,enumitem,cases,cleveref,varwidth}
\usepackage{algpseudocodex}
\allowdisplaybreaks[4]

\newtheorem{definition}{Definition}
\newtheorem{lemma}{Lemma}
\newtheorem{problem}{Problem}
\newtheorem{remark}{Remark}
\newtheorem{theorem}{Theorem}
\newtheorem{prop}{Proposition}

\newcommand\adots{\mathinner{\mkern2mu%
	\raisebox{0.1em}{.}\mkern2mu\raisebox{0.4em}{.}%
	\mkern2mu\raisebox{0.7em}{.}\mkern1mu}}
\newcommand\Ao{A_\mathrm o}

\newcommand\faii{\varphi_\mathrm i}

\newcommand\GAmma{\mathit\Gamma}
\newcommand\nsharp{n_\mathrm i^*}
\newcommand\nocstar{n_\mathrm{oc}^*}
\newcommand\Pio{\mathit\Omega_\mathrm o}
\newcommand\Qi{Q_\mathrm i}
\newcommand\Qoc{Q_\mathrm{oc}}
\newcommand\rank{\mathrm{rank}}
\newcommand\rref{\mathrm{rref}}
\newcommand\im{\mathrm{im}}
\newcommand\imr[1]{\mathrm{im}(#1^\mathrm T)}
\newcommand\R{\mathbf R}
\newcommand\T{\mathrm T}
\newcommand\PHi{\mathit\Phi}
\newcommand\PSi{\mathit\Psi}

\journal{XXX}

\begin{document}

\begin{frontmatter}

\title{Determination of the output injection of
  invertible linear systems
  for guaranteeing a regular relative degree}

\tnotetext[ack]{ }

\author[hit]{Shida~Cao}
\ead{shida\_cao@163.com}

\author[hit,keylab]{Bin~Zhou\corref{cor}}
\ead{binzhou@hit.edu.cn}
\cortext[cor]{Corresponding author.}

\author[hit,sust]{Guangren~Duan}
\ead{g.r.duan@hit.edu.cn}

\affiliation[hit]{
	organization = {Center for Control Theory and Guidance Technology},
	addressline  = {Harbin Institute of Technology},
	city         = {Harbin},
	postcode     = {150001},
	state        = {Heilongjiang},
	country      = {People's Republic of China}}

\affiliation[keylab]{
	organization = {National Key Laboratory of Complex System Control
	                and Intelligent Agent Cooperation},
	city         = {Harbin},
	postcode     = {150001},
	state        = {Heilongjiang},
	country      = {People's Republic of China}}

\affiliation[sust]{
	organization = {Shenzhen Key Laboratory of Control Theory and Intelligent Systems},
	addressline  = {Southern University of Science and Technology},
	city         = {Shenzhen},
	postcode     = {518055},
	state        = {Guangdong},
	country      = {People's Republic of China}}

\begin{abstract}
For an invertible linear time-invariant system, the Morse normal form reveals that
there exist an output injection and an output transformation such that the resulting
system possesses a regular relative degree.
Based on this fact, this paper develops a method to determine all admissible output
injections and output transformations that achieve this property.
In contrast to the recursive and implicit computation methods in the existing
literature, the proposed determination method for the output injections is explicit
and easy to use, since the solutions are obtained by solving linear matrix equations.
A rigorous proof of the proposed method is provided, and an illustrative example is
presented to demonstrate the applicability of the approach.
\end{abstract}

\begin{keyword}
Linear time-invariant system\sep
Invertible system\sep
Morse normal form\sep
Output injection\sep
Output transformation\sep
Relative degree

\textbf{MSC codes}:
34C20\sep
93B10\sep
93B11\sep
93B17\sep
93C05
\end{keyword}

\end{frontmatter}

\section{Introduction}

Morse normal form (\cite{Morse1973SIAM[J]}),
a fundamental result in the structural analysis of linear systems
(\cite{Kouri23,Nicolau25,Sepitka25}),
reveals an important structural property of linear systems.
Under the complete equivalence transformations consisting of
\begin{itemize}
\item the algebraic equivalence transformations, including
  \begin{itemize}
  \item the state transformation,
  \item the input transformation, and
  \item the output transformation,
  \end{itemize}
\item the state feedback transformation, and
\item the output injection transformation,
\end{itemize}
a linear system can be transformed into a structured form composed of four subsystems:
an internal dynamic subsystem,
a strongly controllable subsystem,
a strongly observable subsystem, and
a strongly controllable and observable subsystem (also known as the prime subsystem).

The properties and applications of the Morse normal form have been explored
by many researchers, such as
(\cite{Kucera2019ECC[C],Kucera2020TAC[J],Patil2021,Chen2021,Zhou2024IJC[J]}),
among which the case of invertible linear systems
(\cite{Rospondek2017,Zhou2024IJC[J]})
is of particular interest to us.
On the one hand,
in the Morse normal form of an invertible system,
the strongly controllable subsystem
and the strongly observable subsystem do not appear.
On the other hand,
a system containing only an internal dynamic subsystem and
a prime subsystem has a regular relative degree
(\cite{Mueller2009[J],Zhou2023Auto[J],Guo}),
that is,
the coupling matrix of the system is nonsingular.
These two observations lead to the fact that,
under a series of complete equivalence transformations,
an invertible system can be transformed into
a system with a regular relative degree,
which is an important class of systems.

Among the complete equivalence transformations,
the regularity of the relative degree is invariant under
state transformations, input transformations,
and state feedback transformations,
while it can only be influenced by output transformations
and output injection transformations.
Summarizing the above analyses,
the Morse normal form reveals the existence of
output transformations and output injection transformations
that render an invertible system a regular relative degree.

Compared with existence results,
determination problems are often equally important.
The determination of the transformation matrices associated with the Morse normal form
has been extensively investigated in
\cite{Thorp1973,Jordan1977,vanDooren1979LAA[J],
Kitapci1984CDC[C],Dragan1987,Sannuti1987,Suda1994,Kucera20,Chen21}.
However, the existing approaches mainly focus on computing the transformation matrices
that bring a system into the Morse normal form,
and the corresponding computation procedures are usually recursive or implicit.
Moreover, these methods do not directly provide all admissible output injections
and output transformations that render an invertible system a regular relative degree.

Motivated by the above observations,
this paper investigates the determination of transformation matrices
for invertible linear time-invariant systems.
Different from the classical computation procedures for the Morse normal form,
which mainly aim at constructing the canonical structure,
this paper directly characterizes all admissible output injections
and output transformations that ensure the regularity of the relative degree.

The main contributions of this paper are summarized as follows.
\begin{itemize}

\item An explicit determination method is developed to compute all admissible
output injection and output transformation matrices that render
an invertible linear time-invariant system a regular relative degree.

\item The determination problem is reduced to solving a sequence of
linear matrix equations, which makes the proposed method explicit
and easy to implement.

\item Several properties of the block Hankel matrix are established
to support the theoretical development and the proof of the proposed method.

\end{itemize}

The remainder of this paper is organized as follows.
Section~2 presents the mathematical preliminaries and the problem formulation.
Section~3 provides the main results, describing the determination method
for both the output injection and the output transformation.
Section~4 presents a representative example to demonstrate
the application of the proposed computational method.
The proofs of the main results are given in Section~5.
Section~6 concludes the paper and discusses possible directions for future work.

\textbf{Notations:}
$\R$, $\mathbf C$, and $\mathbf Z$ denote the sets of real numbers,
complex numbers, and integers, respectively.
$\R^p$ denotes the set of $p$-dimensional real column vectors,
$\R^{p\times q}$ the set of $p\times q$ real matrices, and
$\R^{p\times q}_r$ the set of $p\times q$ real matrices with rank $r$.
$0_{p\times q}\in\R^{p\times q}$ denotes the $p\times q$ zero matrix,
$0_p\in\R^p$ the $p$-dimensional zero column vector, and
$I_p\in\R^{p\times p}$ the $p$-order identity matrix.
The symbol $0$ is used to denote either the number zero or
a zero matrix whose order is not specified.
For a matrix $X$,
$X^\T$ denotes its transpose;
$\rank(X)$ its rank;
$\det(X)$ its determinant if $X$ is square;
$\rref(X)$ its reduced row-echelon form;
and $\im(X)$ its column space.
By combining the symbols of transpose and image,
$\imr X$ is frequently used to denote the row space of $X$.
For submatrix representations,
$X[p,q],X[p,:],X[:,q],X[p_1\!:\!p_2,:]$ denote the $(p,q)$-entry,
the $p$-th row, the $q$-th column, and
the $p_1$-th to $p_2$-th rows of $X$, respectively.
In this paper, we frequently use the concept of an empty matrix.
For a matrix $X\in\R^{p\times q}$ or $X\in\R^{p\times q}_r$,
if $p=0$ or $q=0$, then $X$ is called an empty matrix.
The image of an empty matrix is the empty set, and conversely,
a matrix contained in the empty set is an empty matrix.
For a matrix summation $\Sigma_{i=p}^qX_i$,
if $q<p$, the summation is regarded as nonexistent.
A detailed remark on this unusual notation will be given
whenever it is encountered.
For two matrices $X$ and $Y$,
their difference is denoted by $X\backslash Y$,
their Kronecker product by $X\otimes Y$, and
$X\oplus Y$ denotes the block-diagonal matrix
$X\oplus Y=\begin{bmatrix}X&\\ &Y\end{bmatrix}$.
In a block matrix, the symbol ``$*$'' denotes a block that is not of concern.
For a sequence $a_1,a_2,\dots,a_n\in\R$,
$\nabla$ and $\nabla^2$ denote the first-order and second-order
backward differences, respectively, that is,
$\nabla a_i=a_i-a_{i-1}$,
$\nabla^2a_i=\nabla a_i-\nabla a_{i-1}$ for
$i=2,3,\dots,n$, and particularly, $\nabla a_1=a_1$.

\section{Preliminaries and problem statement}

\subsection{Output transformation and output injection}

Consider the square linear system
\begin{equation}\label{eqn: xdot=Ax+Bu, y=Cx}
\dot x=Ax+Bu,\ y=Cx.
\end{equation}
Here, $x\in\R^n$, $u,y\in\R^m$ are the state, input and output vectors;
and $A\in\R^{n\times n}$, $B\in\R^{n\times m}$, $C\in\R^{m\times n}$
are constant matrices.
The system \eqref{eqn: xdot=Ax+Bu, y=Cx} is called the system $(A,B,C)$ for short.

\begin{definition}[regular relative degree]\label{def:rd}
Denote $C=\begin{bmatrix}c_1^\T&c_2^\T&\cdots&c_m^\T\end{bmatrix}^\T$
in system \eqref{eqn: xdot=Ax+Bu, y=Cx},
where $c_i\in\R^{1\times n}$ for $i=1,2,\dots,m$.
The ordered set $\{r_1,r_2,\dots,r_m\}$, where $1\le r_i\le n$,
is said to be the relative degree of system
\eqref{eqn: xdot=Ax+Bu, y=Cx} if for $i=1,2,\dots,m$, there holds
$c_iA^jB=0$ for $j=0,1,\dots,r_i-2$, $c_iA^{r_i-1}B\ne0$.
We say the relative degree $\{r_1,r_2,\dots,r_m\}$ is regular if
$\det(\GAmma(A,B,C))\ne0$, where
\[
\GAmma(A,B,C)=
\begin{bmatrix}
(c_1A^{r_1-1}B)^\T&(c_2A^{r_2-1}B)^\T&\cdots&(c_mA^{r_m-1}B)^\T
\end{bmatrix}^\T.
\]
Here, $\GAmma(A,B,C)\in\R^{m\times m}$ is referred to
as the coupling matrix.
Conversely, the relative degree $\{r_1,r_2,\dots,r_m\}$
is said to be irregular if $\det(\GAmma(A,B,C))=0$.
\end{definition}

\begin{remark}
The importance of the regular relative degree lies in that
it is the sufficient and necessary condition for a linear system to
have an input-output normal form (\cite{Zhou2023Auto[J]}).
\end{remark}

For the system \eqref{eqn: xdot=Ax+Bu, y=Cx},
among the state transformation $\tilde x=T_xx$, $T_x\in\R^{n\times n}_n$,
the input transformation $\tilde u=T_uu$, $T_u\in\R^{m\times m}_m$ and
the output transformation $\tilde y=T_yy$, $T_y\in\R^{m\times m}_m$,
only the output transformation can influence
the regularity of the relative degree.
That is to say, for a system having an irregular relative degree,
only the output transformation may exist to make
the transformed system $(A,B,T_yC)$:
\[
\dot x=Ax+Bu,\ y=T_yCx,
\]
has a regular relative degree.
The sufficient and necessary condition about what kind of systems have
such a $T_y$ was proposed by \cite{Zhou2023Auto[J]} and \cite{Cao2025rrdref[J]}.

Finally, we introduce the output injection.
Output injection transformation refers to the introduction of
a proportional term for the output in the state equation,
i.e., $Ly$.
Here, $L\in\R^{n\times m}$ is usually called the Luenberger gain.
After output injection, the state equation of
the system \eqref{eqn: xdot=Ax+Bu, y=Cx} becomes
$\dot x=Ax+Bu+Ly=Ax+Bu+LCx=(A+LC)x+Bu$,
and thus the original system \eqref{eqn: xdot=Ax+Bu, y=Cx}
becomes the system $(A+LC,B,C)$:
\begin{equation*}
\dot x=(A+LC)x+Bu,\ y=Cx.
\end{equation*}
Performing the output injection and the output transformation in sequence,
we obtain the system $(A+LC,B,T_yC)$:
\begin{equation}\label{eqn: xdot=(A+LC)x+Bu, y=TyCx}
\dot x=(A+LC)x+Bu,\ y=T_yCx.
\end{equation}
The system \eqref{eqn: xdot=(A+LC)x+Bu, y=TyCx}
is what this paper aims to investigate.

\subsection{Invertible systems}

In this subsection,
we provide the definition of the invertible linear systems.

\begin{definition}\emph{(invertible system)}
Denote the transfer function of the system \eqref{eqn: xdot=Ax+Bu, y=Cx}
as $G(s)$ and the normal rank of the transfer function as
$\mathrm{nrank}(G(s))=\max_{s\in\mathbf C}\{\rank(G(s))\}$.
The system \eqref{eqn: xdot=Ax+Bu, y=Cx} is called invertible
if $\mathrm{nrank}(G(s))=m$.
\end{definition}

Then we need to introduce the concepts of
the invertibility matrices.

\begin{definition}
\emph{\cite{Zhoubook,Zhou2024IJC[J]}
(invertibility matrices and indices)}
The series of matrices
\[
\Qi^{[i]}(A,B,C)
=\begin{bmatrix}
&&&CB\\
&&CB&CAB\\
&\adots&\adots&\vdots\\
CB&CAB&\cdots&CA^{i-1}B
\end{bmatrix}
\in\R^{mi\times mi},
\quad i=1,2,\dots,n,
\]
are called the invertibility matrices of the system \eqref{eqn: xdot=Ax+Bu, y=Cx}.
The series of integers
$\faii^{[i]}=\faii^{[i]}(A,B,C)=\nabla\rank(\Qi^{[i]}(A,B,C))$
for $i=1,2,\dots,n$
are referred to as the invertibility indices of
the system \eqref{eqn: xdot=Ax+Bu, y=Cx}.
\end{definition}

For the sake of notational simplicity,
we sometimes directly use $\Qi^{[i]}$ to represent $\Qi^{[i]}(A,B,C)$ if no confusion may arise.
As for the invertibility indices,
there holds the following result.

\begin{lemma}\label{thm: invertible indices}
\emph{\cite{Sain1969TAC[J]}}
About the invertibility indices, there hold
\begin{enumerate}
\item The system \eqref{eqn: xdot=Ax+Bu, y=Cx} is invertible
if and only if $\faii^{[n]}=m$;\label{item:inv<=>fain=m}
\item $\faii^{[1]}\le\faii^{[2]}\le\cdots\le\faii^{[n]}$.\label{item:fai1<fain}
\end{enumerate}
\end{lemma}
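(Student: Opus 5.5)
We will prove the two items of Lemma~\ref{thm: invertible indices} by interpreting $\Qi^{[i]}$ as the matrix of an input-to-output map and reading off the differences of its ranks. Denote by $M_k=CA^{k-1}B$ the Markov parameters, so that $G(s)=\sum_{k\ge1}M_ks^{-k}$.

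For item~\ref{item:fai1<fain}, write $\Qi^{[i+1]}$ in block-partitioned form. Its last $mi$ block rows, restricted to the last $mi$ block columns, reproduce $\Qi^{[i]}$. Its first block row is $[0\ \cdots\ 0\ M_1]$, which is supported only on the last block column. Equivalently, $\Qi^{[i+1]}$ contains $\Qi^{[i]}$ in two positions: the lower-right $mi\times mi$ block, and the upper-right $mi\times mi$ block, which is the block Hankel matrix built from $M_1,\dots,M_i$.

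The cleanest route is the classical Sain--Massey argument. Read the rows of $\Qi^{[i]}$ bottom-to-top; they are the row-blocks $[M_i,\dots,M_1]$ of a lower block-triangular Toeplitz matrix $T_i$ after a block-row reversal. This reversal does not change the rank, so $\rank\Qi^{[i]}=\rank T_i$, where
\[
T_i=\begin{bmatrix}M_1&&\\ \vdots&\ddots&\\ M_i&\cdots&M_1\end{bmatrix}.
\]
Define $\faii^{[i]}=\rank T_i-\rank T_{i-1}$. Let $N_i=\{v\in\R^{mi}:T_iv=0\}$, and let $\pi_i$ be the projection of $v$ onto its first block component $v_1$. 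Because $T_i$ is block lower triangular Toeplitz, $T_{i+1}\begin{bmatrix}v\\ w\end{bmatrix}=0$ forces $T_iv=0$. Conversely, if $(v_1,\dots,v_{i+1})\in N_{i+1}$, then shifting gives $(v_2,\dots,v_{i+1})\in N_i$. Hence
\[
\dim N_{i+1}=\dim N_i+\dim\pi_{i+1}(N_{i+1}).
\]
Substituting $\rank T_i=mi-\dim N_i$, this yields $\faii^{[i+1]}=m-\dim\pi_{i+1}(N_{i+1})$. Truncation maps $N_{i+2}$ into $N_{i+1}$ and preserves the first component, so $\pi_{i+2}(N_{i+2})\subseteq\pi_{i+1}(N_{i+1})$. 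The projected dimensions are therefore nonincreasing, which gives monotonicity of $\faii^{[i]}$ in $i$. This bookkeeping of kernels under the shift is the main technical step, and I would verify it carefully.

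For item~\ref{item:inv<=>fain=m}, note that the subspaces $\pi_i(N_i)$ form a decreasing chain in $\R^m$. The system is invertible exactly when no nonzero input power series $v(s)=\sum v_ks^{-k}$ with leading coefficient $v_1\ne0$ is annihilated by $G(s)$ to all orders, that is, when $\bigcap_i\pi_i(N_i)=\{0\}$. Indeed, a polynomial or rational kernel vector of $G(s)$ can be normalized into such a series, and conversely a compactness or dimension argument produces one from a nonzero intersection.

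It remains to show that the chain stabilizes by $i=n$. I would argue via Cayley--Hamilton: $M_{k}$ for $k>n$ is a linear combination of $M_1,\dots,M_n$. Combined with a dimension count on $x$-trajectories, showing that the reachable and unobservable behaviour relevant to the kernel lives in $\R^n$, this gives $\pi_n(N_n)=\pi_{n+j}(N_{n+j})$ for all $j\ge0$. Hence $\faii^{[n]}=m$ if and only if the intersection is trivial, which holds if and only if $\mathrm{nrank}(G(s))=m$.

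I expect this stabilization bound at exactly $n$ to be the main obstacle. My first attempt would be a structure-algorithm style argument, noting that each strict drop in the chain must reduce a subspace of $\R^n$ (the associated weakly unobservable directions).
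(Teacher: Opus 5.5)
\textbf{Gap in Item~1.} Item~1 has a genuine gap, in the direction ``invertible $\Rightarrow\faii^{[n]}=m$''. The parts you do establish are sound:
\begin{itemize}
\item the identity $\faii^{[i]}=m-\dim\pi_i(N_i)$;
\item the converse direction: normalize a kernel vector of $G(s)$ into a series with $v_1\ne0$ and truncate it to length $n$.
\end{itemize}
But invertibility only rules out an \emph{infinite} zero-output sequence with $v_1\ne0$. To conclude $\pi_n(N_n)=\{0\}$, you need every $v_1\in\pi_n(N_n)$ to extend to such an infinite sequence. In particular this requires $\pi_n(N_n)=\pi_{n+j}(N_{n+j})$ for all $j\ge0$; without it you only get $\faii^{[i]}=m$ for some possibly larger $i$.

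That stabilization at $n$ is exactly the content of the Sain--Massey result, which the paper cites rather than proves. Your sketch does not establish it. Cayley--Hamilton expresses $M_k$, $k>n$, through $M_1,\dots,M_n$. It says nothing about whether an element of $N_n$ extends, possibly after changing $v_2,\dots,v_n$, to an element of $N_{n+1}$.

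The structure-algorithm idea you mention last is the right one, and it should be the proof:
\begin{itemize}
\item With $x_1=0$ and $x_{k+1}=Ax_k+Bv_k$, the condition $T_iv=0$ reads $Cx_2=\cdots=Cx_{i+1}=0$. Hence $\pi_i(N_i)=\{v_1:Bv_1\in\mathcal V_{i-1}\}$, where $\mathcal V_0=\ker C$ and $\mathcal V_{k+1}=\{x\in\ker C:Ax\in\mathcal V_k+\im(B)\}$.
\item This chain in $\R^n$ is decreasing and constant from its first repetition on. Since $\dim\mathcal V_0\le n-1$ when $C\ne0$ (the case $C=0$ is trivial), $\mathcal V_{n-1}=\mathcal V_{n-1+j}$ for all $j$.
\item The limit $\mathcal V^*=\mathcal V_{n-1}$ satisfies $\mathcal V^*\subseteq\ker C$ and $A\mathcal V^*\subseteq\mathcal V^*+\im(B)$. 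So any nonzero $v_1$ with $Bv_1\in\mathcal V^*$ extends step by step to an infinite zero-output input sequence.
\item That sequence is a nonzero kernel vector of $G(s)$ over Laurent series in $s^{-1}$. Hence $\det G(s)\equiv0$, contradicting invertibility.
\end{itemize}
This argument also replaces your unproved ``compactness or dimension argument''.

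\textbf{Item~2.} Your argument is the kernel-side dual of the paper's. The paper shows $\im(E_{i-1}^\T)\subseteq\im(E_i^\T)$ for the reduced row-echelon form of the block Hankel matrix, together with $e_i=\faii^{[i]}$; indeed $\im(E_i^\T)=\pi_i(N_i)^\perp$.

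Your route works, but one stated step is false: shifting does not map $N_{i+1}$ into $N_i$. Write $T_{i+1}=\begin{bmatrix}M_1&0\\ c&T_i\end{bmatrix}$ with $c=[M_2^\T\ \cdots\ M_{i+1}^\T]^\T$. Then $(v_1,v')\in N_{i+1}$ only gives $T_iv'=-cv_1$, which need not vanish.

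What is true, and what your dimension formula actually uses, is that $u\mapsto(0,u)$ maps $N_i$ isomorphically onto $N_{i+1}\cap\ker\pi_{i+1}$. Rank--nullity for $\pi_{i+1}$ restricted to $N_{i+1}$ then gives $\dim N_{i+1}=\dim N_i+\dim\pi_{i+1}(N_{i+1})$.

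A minor point: $T_i$ arises from $\Qi^{[i]}$ by reversing block columns, not block rows. This is harmless for the rank.
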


\begin{remark}
\Cref{item:fai1<fain} of Lemma~\ref{thm: invertible indices} comes from a footnote of
\cite{Sain1969TAC[J]}.
A detailed proof can be found in \cite{Zhoubook}.
In this paper, it can also be proven by
\Cref{thm: ei=d'i} in the following.
\end{remark}

\subsection{Property and criterion}

In this subsection, at first, we introduce the rank property of
the invertibility matrices and the output controllability matrices
under some complete equivalence transformations.
Then, we introduce a concise criterion of the existence of
the output transformations making a linear system have a regular relative degree.
We introduce the output controllability matrices.

\begin{definition}
\emph{\cite{Zhoubook} (output controllability matrices)}
The series of matrices
\[
\Qoc^{[i]}(A,B,C)
=\begin{bmatrix}CB&CAB&\cdots&CA^{i-1}B\end{bmatrix}
\in\R^{m\times mi},
\quad i=1,2,\dots,n,
\]
are called the output controllability matrices of
the system \eqref{eqn: xdot=Ax+Bu, y=Cx}.
\end{definition}

Like $\Qi^{[i]}$, for the sake of notational simplicity,
we sometimes directly use $\Qi^{[i]}$ to represent $\Qi^{[i]}(A,B,C)$ if no confusion may arise.
Some rank properties are introduced below.

\begin{lemma}\label{thm: dont change Qi/oc}
\emph{\cite{Zhoubook}}
Consider the system \eqref{eqn: xdot=Ax+Bu, y=Cx}.
Suppose that $T_x\in\R^{n\times n}_n$, $T_u\in\R^{m\times m}_m$,
$T_y\in\R^{m\times m}_m$, $K\in\R^{m\times n}$, $L\in\R^{n\times m}$.
Then for $i=1,2,\dots,n$, there hold
\begin{gather}
\label{eqn: rank Qi A+BK}
\rank(\Qi^{[i]}(A+BK,B,C))=\rank(\Qi^{[i]}(A,B,C)),\\
\label{eqn: rank Qi A+LC}
\rank(\Qi^{[i]}(A+LC,B,C))=\rank(\Qi^{[i]}(A,B,C)),\\
\label{eqn: rank Qoc A+BK}
\rank(\Qoc^{[i]}(A+BK,B,C))=\rank(\Qoc^{[i]}(A,B,C)),\\
\label{eqn: rank Qi TxTuTy}
\rank(\Qi^{[i]}(T_xAT_x^{-1},T_xBT_u^{-1},T_yCT_x^{-1}))
=\rank(\Qi^{[i]}(A,B,C)),\\
\label{eqn: rank Qoc TxTuTy}
\rank(\Qoc^{[i]}(T_xAT_x^{-1},T_xBT_u^{-1},T_yCT_x^{-1}))
=\rank(\Qoc^{[i]}(A,B,C)).
\end{gather}
\end{lemma}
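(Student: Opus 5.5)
The plan is to prove each identity by exhibiting an explicit factorization of the transformed matrix as the original matrix multiplied on the left and/or right by nonsingular matrices. Rank invariance then follows immediately.

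\textbf{Similarity and scaling, \eqref{eqn: rank Qi TxTuTy} and \eqref{eqn: rank Qoc TxTuTy}.} These are the easiest. Put $\tilde A=T_xAT_x^{-1}$, $\tilde B=T_xBT_u^{-1}$, $\tilde C=T_yCT_x^{-1}$. Then $\tilde C\tilde A^j\tilde B=T_yCA^jBT_u^{-1}$ for every $j$. This gives
\[
\Qoc^{[i]}(\tilde A,\tilde B,\tilde C)=T_y\Qoc^{[i]}(A,B,C)(I_i\otimes T_u^{-1})
\]
and
\[
\Qi^{[i]}(\tilde A,\tilde B,\tilde C)=(I_i\otimes T_y)\Qi^{[i]}(A,B,C)(I_i\otimes T_u^{-1}).
\]
Every factor other than the middle one is nonsingular, so the ranks agree.

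\textbf{State feedback, \eqref{eqn: rank Qi A+BK} and \eqref{eqn: rank Qoc A+BK}.} Expand $(A+BK)^j$ and collect terms to get the recursion
\[
C(A+BK)^jB=CA^jB+\sum_{k=0}^{j-1}CA^{j-1-k}B\,K(A+BK)^kB .
\]
By induction on $j$, $C(A+BK)^jB$ equals $CA^jB$ plus a right combination of $CB,\dots,CA^{j-1}B$, with coefficient matrices depending only on $A,B,K$. Hence there is a block upper-triangular matrix $U_i$ with identity diagonal blocks such that $\Qoc^{[i]}(A+BK,B,C)=\Qoc^{[i]}(A,B,C)U_i$. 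The coefficients $M_{l,j}$ for which $C(A+BK)^jB=\sum_{l\le j}CA^lBM_{l,j}$ do not depend on $i$. Because of this, the same Toeplitz-type coefficient structure acts on the Hankel-type matrix $\Qi^{[i]}$. Concretely, the $(p,q)$ block of $\Qi^{[i]}$ is $CA^{p+q-i-1}B$ (zero when the exponent is negative). One checks that the new matrix is $\Qi^{[i]}(A,B,C)\,V_i$, where $V_i$ is block triangular with identity diagonal blocks and its blocks are built from the $M_{l,j}$ arranged consistently along anti-diagonals.

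\textbf{Output injection, \eqref{eqn: rank Qi A+LC}.} This is dual to the previous case. The analogous recursion
\[
C(A+LC)^jB=CA^jB+\sum_{k}C(A+LC)^kL\,CA^{j-1-k}B
\]
gives a left multiplication $W_i\Qi^{[i]}(A,B,C)$ with $W_i$ unipotent block triangular. Note that no such identity is claimed for $\Qoc^{[i]}$ under output injection. This fits the structure: left multiplication mixes block rows, and $\Qoc^{[i]}$ has only one block row.

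\textbf{Main obstacle.} The only delicate step is verifying that the triangular factor really works on the Hankel-shaped $\Qi^{[i]}$. The coefficients must be placed so that each anti-diagonal block $CA^kB$ picks up exactly the right combination of lower-order blocks, and the upper-left zero triangle must be preserved. To settle this cleanly, I would first prove it in the single-step form $\Qi^{[i]}(A+BK,B,C)=\Qi^{[i]}(A,B,C)V_i$ directly by induction on $i$. The matrix $\Qi^{[i+1]}$ contains $\Qi^{[i]}$ as its upper-right block, bordered by a zero block column on the left and a new bottom row; appending that bottom block row and left block column then extends $V_i$ to $V_{i+1}$.
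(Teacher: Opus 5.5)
Your proposal is correct and follows essentially the paper's route. The paper quotes this lemma from \cite{Zhoubook} without proof, but the part it does establish itself is $\Qi^{[i]}(A,B,C)=\Pio^{[i]}(L)\Qi^{[i]}(A+LC,B,C)$, together with the quoted inverse form $\Qi^{[i]}(A+LC,B,C)=\mathit\Pi_\mathrm o^{[i]}(L)\Qi^{[i]}(A,B,C)$; this is exactly your unipotent block-triangular factorization (your $W_i$ is $\mathit\Pi_\mathrm o^{[i]}(L)$), and your similarity and state-feedback factorizations are the standard analogues.

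The step you flag as delicate closes at once. The coefficient of $CA^lB$ in $C(A+BK)^jB$ is $K(A+BK)^{j-1-l}B$, which depends only on $j-l$; independence of $i$ alone would not suffice. Hence $U_i$ is block Toeplitz, every trailing $p\times p$ principal block of $U_i$ equals $U_p$, and $V_i=U_i$ works without a separate induction.
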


We introduce the criterion regarding the existence
of an output transformation such that the new system has a regular relative degree.

\begin{lemma}\label{thm: Zhou2023(8)(9)}
\emph{\cite{Zhou2023Auto[J]}}
Consider the system \eqref{eqn: xdot=Ax+Bu, y=Cx}.
There exists an output transformation matrix $T_y\in\R^{m\times m}_m$
such that the new system $(A,B,T_yC)$ has a regular relative
degree if and only if
\begin{equation}\label{eqn: Zhou2023(8)(9)}
\rank(\Qi^{[n]})
=\Sigma_{i=1}^n\rank(\Qoc^{[i]}),\
\rank(\Qoc^{[n]})=m.
\end{equation}
An alternative expression for \eqref{eqn: Zhou2023(8)(9)} is
\begin{equation}\label{eqn: Zhou2023(8)(9) n*}
\rank(\Qi^{[\nocstar]})
=\Sigma_{i=1}^{\nocstar}\rank(\Qoc^{[i]}),\
\rank(\Qoc^{[n]})=m,
\end{equation}
where
$\nocstar=\nocstar(A,B,C)=\min\big\{i\in\{1,2,\dots,n\}:\rank(\Qoc^{[i]}(A,B,C))=m\big\}$.
\end{lemma}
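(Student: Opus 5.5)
We prove necessity through an echelon computation for a system that already has a regular relative degree. For sufficiency we construct $T_y$ from a flag of left annihilators of the output controllability matrices. Throughout we use Lemma~\ref{thm: dont change Qi/oc}: the ranks of $\Qi^{[i]}$ and $\Qoc^{[i]}$ do not change under $C\mapsto T_yC$, so both rank conditions may be checked on any transformed system.

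\emph{Necessity.} Suppose $(A,B,\tilde C)$, with $\tilde C=T_yC$, has relative degree $\{r_1,\dots,r_m\}$ and a nonsingular coupling matrix $\GAmma$.
\begin{itemize}
\item Row $i$ of $\Qoc^{[k]}(A,B,\tilde C)$ has zero blocks in positions $1,\dots,r_i-1$, and its block in position $r_i$ is $\tilde c_iA^{r_i-1}B$, which is row $i$ of $\GAmma$.
\item Sort the rows by $r_i$. The rows with $r_i\le k$ are then in block-echelon form, and their leading blocks are distinct rows of the nonsingular $\GAmma$, hence linearly independent. The rows with $r_i>k$ are zero.
\item Therefore $\rank(\Qoc^{[k]})=\#\{i:r_i\le k\}$. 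Since every $r_i\le n$, this gives $\rank(\Qoc^{[n]})=m$.
\item In $\Qi^{[n]}$, block row $k$ is $\Qoc^{[k]}$ shifted to the right by $n-k$ blocks. So row $i$ of block row $k$ has its leading block at column-block position $n-k+r_i$, and that block is again row $i$ of $\GAmma$.
\item Group the nonzero rows by the position of their leading block. For a fixed position, the leading blocks are distinct rows of $\GAmma$, hence independent. A block-echelon argument then shows that all nonzero rows are independent, so $\rank(\Qi^{[n]})=\Sigma_{k=1}^n\#\{i:r_i\le k\}=\Sigma_{k=1}^n\rank(\Qoc^{[k]})$.
\end{itemize}

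\emph{Sufficiency.} Let $S_k=\{w\in\R^{1\times m}:w\Qoc^{[k]}=0\}$ for $k=0,1,\dots,n$, with $S_0=\R^{1\times m}$. These subspaces are nested decreasingly, $\dim S_k=m-\rank(\Qoc^{[k]})$, and $S_n=\{0\}$ because $\rank(\Qoc^{[n]})=m$.
\begin{itemize}
\item Choose a basis $w_1,\dots,w_m$ adapted to this flag, so that $S_{k-1}$ is spanned by a complement of $S_k$ inside it together with $S_k$.
\item A vector $w_i\in S_{k-1}\setminus S_k$ gives an output $w_iC$ of relative degree exactly $k$.
\item Let $T_y$ be the matrix with rows $w_i$. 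The only remaining requirement is that the rows $w_iCA^{r_i-1}B$ of $\GAmma$ are linearly independent.
\end{itemize}

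To prove that independence, repeat the echelon count above for $(A,B,T_yC)$ without assuming $\GAmma$ nonsingular. Every vector $w_i$ lying in a given $S_{k-1}\setminus S_k$ has leading block at the same position $k$. The count then gives the general inequality $\rank(\Qi^{[n]})\le\Sigma_k\rank(\Qoc^{[k]})$. Equality should hold exactly when there is no nontrivial relation $\Sigma_i\alpha_iw_iCA^{r_i-1}B=0$.

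\emph{Main obstacle.} The key step is the contrapositive: a relation $\Sigma_i\alpha_iw_iCA^{r_i-1}B=0$ forces a strict rank drop in $\Qi^{[n]}$. The construction I will try is as follows. For each $i$ with $\alpha_i\neq0$, take row $w_i$ from block row $n-\rho+r_i$, where $\rho=\max\{r_i:\alpha_i\ne0\}$. This aligns all the leading blocks at column-block position $\rho$, so the combination $\Sigma_i\alpha_i(\cdot)$ of these rows has its first nonzero block strictly further to the right. One then has to check that this combination does not reappear among the other echelon pivots, so that exactly one counted rank is lost. I expect this bookkeeping to be the delicate part. It is cleanest to carry it out by induction on the leading position $\rho$.

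\emph{The alternative expression \eqref{eqn: Zhou2023(8)(9) n*}.} Every $r_i$ satisfies $r_i\le\nocstar$. The independence argument only involves the block rows $k\le\nocstar$, so the first condition at $\nocstar$ already yields a nonsingular $\GAmma$. Conversely, by the necessity part, a nonsingular $\GAmma$ gives equality at every truncation level $\nocstar\le k\le n$, in particular at $k=\nocstar$.
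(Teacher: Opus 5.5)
The paper quotes this lemma from \cite{Zhou2023Auto[J]} without proof, so there is no in-paper argument to compare against, and I judge the proposal on its own. Your necessity argument is correct. The block-echelon count works because the leading blocks at each position are distinct rows of the nonsingular $\GAmma$, and Lemma~\ref{thm: dont change Qi/oc} takes you back to $C$. The flag construction of $T_y$ for sufficiency is also the right one.

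The gap is the step you yourself call the main obstacle. You never show that a relation $\Sigma_i\alpha_iw_iCA^{r_i-1}B=0$ contradicts \eqref{eqn: Zhou2023(8)(9)}, and the route you set up does not close. Aligning the chosen rows at block column $\rho=\max\{r_i:\alpha_i\neq0\}$ leaves tail blocks $\Sigma_i\alpha_iw_iCA^{r_i-1+s}B$, $s=1,\dots,n-\rho$, which are in general nonzero. To conclude that a rank is lost, you would have to show that this vector lies in the span of the remaining nonzero rows. The proposal gives no mechanism for that: ``the combination does not reappear among the other echelon pivots'' is the claim to be proved, not an argument. (You also need \emph{at least} one lost rank, not exactly one.) As written, sufficiency is therefore unproved for both \eqref{eqn: Zhou2023(8)(9)} and \eqref{eqn: Zhou2023(8)(9) n*}.

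The repair is to align at the last block column, where there is no tail. In $\Qi^{[n]}(A,B,T_yC)$, the row with coefficient $w_i$ in block row $r_i$ is $\begin{bmatrix}0&\cdots&0&w_iCA^{r_i-1}B\end{bmatrix}$, because $w_iCA^sB=0$ for $s\le r_i-2$. Your flag construction makes the number of nonzero rows of $\Qi^{[n]}(A,B,T_yC)$ equal to $\Sigma_{k=1}^n\#\{i:r_i\le k\}=\Sigma_{k=1}^n\rank(\Qoc^{[k]})$. Both ranks are unchanged when $C$ is replaced by $T_yC$. So \eqref{eqn: Zhou2023(8)(9)} says exactly that these nonzero rows are linearly independent. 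In particular the $m$ rows just described are independent, i.e.\ the rows of $\GAmma(A,B,T_yC)$ are independent. This argument is direct, with no contrapositive and no induction on $\rho$. Since every $r_i\le\nocstar$, the same rows sit in $\Qi^{[\nocstar]}(A,B,T_yC)$, which gives the sufficiency of \eqref{eqn: Zhou2023(8)(9) n*} the same way. Your converse for the alternative expression, via the necessity count at level $\nocstar$, is fine.
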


\subsection{Morse normal form and problem statement}

To analyze the system $(A+LC,B,T_yC)$,
we first recall the well-known Morse normal form (\cite{Morse1973SIAM[J]})
for invertible linear systems (\cite{Zhoubook,Zhou2024IJC[J]}).

\begin{lemma}\label{thm: Morse normal form}
Suppose that the linear system \eqref{eqn: xdot=Ax+Bu, y=Cx} is invertible.
Then there exists a quintuple of matrices $(T_x,T_u,T_y,K,L)$, where
$T_x\in\R^{n\times n}_n$, $T_u\in\R^{m\times m}_m$,
$T_y\in\R^{m\times m}_m$, $K\in\R^{m\times n}$, and $L\in\R^{n\times m}$,
such that the transformed system $(\tilde A,\tilde B,\tilde C)$ with
$\tilde A=T_x(A+BK+LC)T_x^{-1}$,
$\tilde B=T_xBT_u^{-1}$,
$\tilde C=T_yCT_x^{-1}$,
takes the following form:
\begin{equation}\label{eqn: Morse normal form's ABC}
\tilde A=A_0\oplus A_3,\
\tilde B=\begin{bmatrix}0_{m\times n_0}&B_3^\T\end{bmatrix}^\T,\
\tilde C=\begin{bmatrix}0_{m\times n_0}&C_3\end{bmatrix}.
\end{equation}
Here, $A_0\in\R^{n_0\times n_0}$ is a real Jordan matrix, and
$A_3\in\R^{n_3\times n_3}$,
$B_3\in\R^{n_3\times m}$,
$C_3\in\R^{m\times n_3}$
are given by
\begin{align*}
&
A_3=
\begin{bmatrix}
0&I_{q_1-1}\\ 0_1&0
\end{bmatrix}
\oplus\cdots\oplus
\begin{bmatrix}
0&I_{q_m-1}\\ 0_1&0
\end{bmatrix},
\\
&
B_3=
\begin{bmatrix}0_{q_1-1}\\ 1\end{bmatrix}
\oplus\cdots\oplus
\begin{bmatrix}0_{q_m-1}\\ 1\end{bmatrix},
\\
&
C_3=
\begin{bmatrix}1&0_{1\times(q_1-1)}\end{bmatrix}
\oplus\cdots\oplus
\begin{bmatrix}1&0_{1\times(q_m-1)}\end{bmatrix},
\end{align*}
where $q_1\ge q_2\ge\cdots\ge q_m\ge1$,
$n_0+n_3=n$, and $\Sigma_{i=1}^mq_i=n_3$.
\end{lemma}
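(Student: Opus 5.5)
We will prove \Cref{thm: Morse normal form} by reducing to Morse's canonical form for general square systems and then using invertibility to eliminate the unwanted blocks. We do not rederive the full canonical form from scratch. Instead we take as given the general Morse normal form of \cite{Morse1973SIAM[J]}: under some quintuple $(T_x,T_u,T_y,K,L)$, any system $(A,B,C)$ becomes a block-diagonal (decoupled) interconnection of four parts.
\begin{itemize}
\item An internal-dynamics part $(A_0,0,0)$, with $A_0$ in real Jordan form (the state transformation may be refined on this block).
\item A strongly controllable part, made of Brunovsky input chains whose outputs are absent (zero output columns).
\item A strongly observable part, made of Brunovsky output chains with no input.
\item A prime part of integrator chains $\frac{d}{dt}\xi_{k,j}=\xi_{k,j+1}$ and $\dot\xi_{k,q_k}=u_k$, with $y_k=\xi_{k,1}$.
\end{itemize}
Any static direct term is absent because the system has no feedthrough.

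\textbf{Removing the extra blocks.} The key step is to show that invertibility forces the second and third parts to be empty, and forces the numbers of prime chains in input and output to coincide.
\begin{itemize}
\item By \eqref{eqn: rank Qi A+BK}, \eqref{eqn: rank Qi A+LC} and \eqref{eqn: rank Qi TxTuTy}, $\rank(\Qi^{[n]})$ is unchanged by the transformations. By item~\ref{item:inv<=>fain=m} of Lemma~\ref{thm: invertible indices}, invertibility is therefore equivalent to $\nabla\rank\Qi^{[n]}=m$ for the transformed system.
\item Equivalently, the transfer function keeps normal rank $m$, since the normal rank is preserved by feedback and injection: $G$ changes by left and right multiplication with unimodular-type rational factors $(I-K(sI-A)^{-1}B)^{-1}$ and similar.
\item The transfer function of the canonical form is block diagonal. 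The strongly controllable chains contribute input columns with zero output, and the strongly observable chains contribute output rows with zero input. The prime part contributes $\mathrm{diag}(s^{-q_k})$ on $p$ channels.
\item Hence $\mathrm{nrank}\,G=p$, where $p$ is the number of prime chains. Since $G$ is $m\times m$ with normal rank $m$, $p=m$, and no input or output channel is left for the other two parts.
\item The strongly controllable chains each carry an input, so they must be absent. The strongly observable chains each carry an output, so they must be absent too. Any of their states would otherwise require extra inputs or outputs beyond the $m$ already used by the prime part.
\end{itemize}

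\textbf{Assembling the form.} Finally we reorder the prime chains with a permutation on states, inputs and outputs so that $q_1\ge\cdots\ge q_m\ge1$. Writing the chains in companion/shift form gives exactly $A_3,B_3,C_3$ as displayed. Stacking the $n_0$ internal-dynamics states first gives \eqref{eqn: Morse normal form's ABC}, with $n_0+n_3=n$ and $\sum_i q_i=n_3$.

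\textbf{Main obstacle.} The delicate point is the bookkeeping showing that the strongly controllable and strongly observable blocks must have dimension zero, rather than merely contributing no rank. We will argue it via channel counting as above: these blocks use dedicated input or output coordinates in Morse's form, and all $m$ of those coordinates are consumed by the prime part.
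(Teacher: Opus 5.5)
The paper does not prove this lemma. It simply recalls it from the literature: Morse's 1973 canonical form, specialized to invertible systems in Zhou's book and his 2024 paper. Your proposal is correct, and it supplies exactly the specialization step the paper delegates to those references.

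You take the general, fully decoupled four-part form as given. The transformed transfer function is $T_yC(sI-A-BK-LC)^{-1}BT_u^{-1}$. Feedback and injection multiply $G$ by biproper, hence invertible, rational factors such as $(I-K(sI-A)^{-1}B)^{-1}$, so the normal rank $m$ is preserved. Calling these factors ``unimodular'' is a misnomer, but invertibility over $\R(s)$ is all you actually use. The canonical transfer function is $\mathrm{diag}(s^{-q_1},\dots,s^{-q_p})$ padded with zeros, which forces $p=m$. Every strongly controllable (resp.\ strongly observable) chain, including a zero-length one, occupies its own input (resp.\ output) coordinate. Hence those two parts have no chains and therefore no states, which is the correct way to get dimension zero rather than merely zero rank contribution.

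Two small remarks tighten the argument. First, if the version of Morse's theorem you invoke assumes $B$ has full column rank and $C$ full row rank, invertibility supplies both: if $Bv=0$ then $G(s)v=0$, and dually for $C$. Second, your alternative route through $\Qi^{[n]}$ also closes, without transfer functions. For the canonical form, $\faii^{[n]}$ equals the number of prime chains, since every $q_k\le n_3\le n$. So item~1 of Lemma~\ref{thm: invertible indices}, together with the rank invariances, gives $p=m$ directly.

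Compared with the bare citation, your route costs a paragraph. In exchange, it makes explicit that invertibility is exactly what eliminates the two non-prime subsystems. That is the fact the paper relies on in the introduction and in the proof of Proposition~\ref{thm: invertible and Morse form}.
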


The following proposition is a direct consequence of the Morse normal form.
It reveals the relationship between system invertibility and the existence of
output injection and transformation that enable a linear system to possess
a regular relative degree.
The proof is provided in Appendix.

\begin{prop}\label{thm: invertible and Morse form}
For the system \eqref{eqn: xdot=Ax+Bu, y=Cx},
there exist a Luenberger gain $L\in\R^{n\times m}$ and
an output transformation matrix $T_y\in\R^{m\times m}_m$
such that the new system $(A+LC,B,T_yC)$
has a regular relative degree
if and only if the system \eqref{eqn: xdot=Ax+Bu, y=Cx} is invertible.
\end{prop}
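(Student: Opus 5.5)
We prove the two directions separately. Sufficiency comes from the Morse normal form (Lemma~\ref{thm: Morse normal form}). Necessity uses the fact that a regular relative degree forces full normal rank of the transfer function, together with the invariance of invertibility under output injection and output transformation.

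\emph{Sufficiency.} Assume the system is invertible, and take the quintuple $(T_x,T_u,T_y,K,L)$ from Lemma~\ref{thm: Morse normal form}. In the normal form \eqref{eqn: Morse normal form's ABC}, the $i$-th output row is the first coordinate of the $i$-th chain of length $q_i$.
\begin{itemize}
\item For this row, $c_iA_3^jB_3=0$ for $j<q_i-1$, while $c_iA_3^{q_i-1}B_3=e_i^\T$.
\item Hence $(\tilde A,\tilde B,\tilde C)$ has relative degree $\{q_1,\dots,q_m\}$ with coupling matrix $I_m$, which is regular.
\end{itemize}
Next we remove the transformations that do not affect regularity.
\begin{itemize}
\item The relative degree and the coupling matrix are unchanged under $T_x$, since $\tilde c_i\tilde A^j\tilde B=c_iT_x^{-1}\cdots$.
\item Under $T_u$, the coupling matrix is only multiplied on the right by the nonsingular matrix $T_u^{-1}$.
\item Under the state feedback $A\mapsto A+BK$, the rows satisfy $c_i(A+BK)^jB=c_iA^jB$ for $j\le r_i-1$. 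This follows by induction, because $c_i(A+BK)^{j}=c_iA^{j}$ whenever $c_iA^{k}B=0$ for all $k<j$.
\end{itemize}
Therefore the system $(A+LC,B,T_yC)$ also has a regular relative degree. Here $L$ and $T_y$ are the matrices from the quintuple, after accounting for $K$ in the combination $A+BK+LC$. To be careful, we apply the feedback invariance to the pair $(A+LC,\,K)$: the system $(T_x(A+LC+BK)T_x^{-1},T_xBT_u^{-1},T_yCT_x^{-1})$ is regular, and so $(A+LC,B,T_yC)$ is regular.

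\emph{Necessity.} Suppose $(A+LC,B,T_yC)$ has a regular relative degree $\{r_i\}$ with coupling matrix $\GAmma$.
\begin{itemize}
\item Expanding $G(s)=T_yC(sI-A-LC)^{-1}B=\sum_k T_yC(A+LC)^kBs^{-k-1}$, row $i$ equals $s^{-r_i}(\GAmma[i,:]+O(s^{-1}))$.
\item Thus $\mathrm{diag}(s^{r_i})G(s)\to\GAmma$ as $s\to\infty$, so $\det G(s)\not\equiv0$ and the transformed system is invertible.
\end{itemize}
It remains to transfer invertibility back to $(A,B,C)$. By Lemma~\ref{thm: invertible indices}, invertibility is equivalent to $\faii^{[n]}=m$, and this is determined by $\rank\Qi^{[n]}$ and $\rank\Qi^{[n-1]}$. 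By \eqref{eqn: rank Qi A+LC} and \eqref{eqn: rank Qi TxTuTy} (with $T_x=I$, $T_u=I$), these ranks coincide for $(A+LC,B,T_yC)$ and $(A,B,C)$. Hence $(A,B,C)$ is invertible.

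The main obstacle is only a bookkeeping one: the normal form mixes $K$ and $L$ in $A+BK+LC$, and we must check that feedback invariance can be applied on top of the injected matrix $A+LC$. This works because the inductive argument above holds for an arbitrary base matrix.
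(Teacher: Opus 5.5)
Your proof is correct. The necessity direction matches the paper's: both use the rank invariance of $\Qi^{[i]}$ under output injection and output transformation, together with the fact that a regular relative degree forces invertibility. The paper only asserts that fact, whereas you prove it via $\mathrm{diag}(s^{r_i})G(s)\to\GAmma$ as $s\to\infty$.

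The sufficiency direction takes a genuinely different route.
\begin{itemize}
\item \textbf{The paper} never computes a coupling matrix. It uses the rank invariances \eqref{eqn: rank Qi A+BK}--\eqref{eqn: rank Qoc TxTuTy} to transfer the ranks of $\Qi^{[i]}$ and $\Qoc^{[i]}$ from $(A_3,B_3,C_3)$ to $(A+LC,B,C)$. It then asserts that $(A_3,B_3,C_3)$ satisfies the rank criterion \eqref{eqn: Zhou2023(8)(9)}, and invokes Lemma~\ref{thm: Zhou2023(8)(9)} to obtain the existence of some $T_y$.
\item \textbf{You} verify directly that the Morse form has relative degree $\{q_1,\dots,q_m\}$ with coupling matrix $I_m$. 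You then pull regularity back through $T_x$, $T_u$ and the feedback $K$, using your Markov-parameter invariance. That invariance is symmetric, since $A+LC=(A+LC+BK)+B(-K)$, so the direction of the feedback is immaterial.
\end{itemize}

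Your route is self-contained and constructive. It does not need the external criterion of Lemma~\ref{thm: Zhou2023(8)(9)}, it shows that the $T_y$ from the Morse quintuple itself works, and it identifies the resulting relative degree. The paper's route fits its rank-based framework and reuses Lemma~\ref{thm: dont change Qi/oc}. However, it yields only the existence of $T_y$ and leaves the verification of \eqref{eqn: Zhou2023(8)(9)} for $(A_3,B_3,C_3)$ to the reader.
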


Although the Morse normal form is a fundamental result, the original work \cite{Morse1973SIAM[J]}
only provided a recursive and implicit solution for the Luenberger gain $L$.
This paper is dedicated to providing an explicit and computationally efficient
method to determine $L$ and $T_y$.
The formal problem statement is as follows.

\begin{problem}
For an invertible system \eqref{eqn: xdot=Ax+Bu, y=Cx},
characterize the set of all Luenberger gains $L\in\R^{n\times m}$
and output transformation matrices $T_y\in\R^{m\times m}_m$
such that the new system $(A+LC,B,T_yC)$
has a regular relative degree.
\end{problem}

We provide some further clarification on Problem~1.
Since the rows of $\Qi^{[n]}$ are composed of $\Qoc^{[i]}$ for $i=1,2,\dots,n$,
it is evident that
$\rank(\Qi^{[n]})\le\Sigma_{i=1}^n\rank(\Qoc^{[i]})$.
According to \eqref{eqn: rank Qi A+LC}, output injection
does not alter the rank of the invertibility matrices; thus,
$\rank(\Qi^{[n]}(A+LC,B,C))\le
\Sigma_{i=1}^n\rank(\Qoc^{[i]}(A+LC,B,C))$.
It follows from Lemma~\ref{thm: Zhou2023(8)(9)}
that to ensure the system $(A+LC,B,C)$ has a regular relative degree,
one must design a suitable $L$ to
minimize the ranks of $\Qoc^{[i]}(A+LC,B,C)$ such that
$\rank(\Qi^{[n]}(A+LC,B,C))=
\Sigma_{i=1}^n\rank(\Qoc^{[i]}(A+LC,B,C))$.

We note that Problem~1 is inherently nonlinear, as $L$
appears nonlinearly in the expressions for the output controllability matrices,
rendering the problem highly non-trivial.
The essence of our approach is to demonstrate that this nonlinear problem
can be transformed into a linear one.

\subsection{Block Hankel matrix and its reduced row-echelon form}
\label{sec: Hankel}

In this subsection, we establish several properties of block skew
lower-triangular Hankel matrices. These properties serve as a foundation
for characterizing the invertibility matrices in the subsequent subsection.

For $D_i \in \R^{m \times m}, i=1,2,\dots,n$,
denote the block skew lower-triangular Hankel matrix as
\[
D^{[i]}
=\begin{bmatrix}
&&&D_1\\
&&D_1&D_2\\
&\adots&\adots&\vdots\\
D_1&D_2&\cdots&D_n
\end{bmatrix}\in\R^{mi\times mi},
\quad i=1,2,\dots,n.
\]
Let $d_i = \rank(D^{[i]})$, $d_i' = \nabla d_i$, and
$d_i'' = \nabla^2 d_i$ for $i=1,2,\dots,n$.
The reduced row-echelon form of $D^{[n]}$ is partitioned as follows,
\begin{equation}\label{eqn:rrefDniswhat}
\rref(D^{[n]})
=\ \ \begin{NiceArray}{ccccWc{17pt}}
E_1&*&\cdots&*&e_1\\
&E_2&\cdots&*&e_2\\
&&\ddots&\vdots&\vdots\\
&&&E_n&e_n\\
0&0&\cdots&0&\\
m&m&\cdots&m&
\CodeAfter\SubMatrix[{1-1}{5-4}][right-xshift=5pt]
\tikz \draw (2-|1) -- (2-|5);
\tikz \draw (3-|1) -- (3-|5);
\tikz \draw (4-|1) -- (4-|5);
\tikz \draw (5-|1) -- (5-|5);
\tikz \draw (1-|2) -- (6-|2);
\tikz \draw (1-|3) -- (6-|3);
\tikz \draw (1-|4) -- (6-|4);
\end{NiceArray}.
\end{equation}
Here, $E_i \in \R^{e_i \times m}_{e_i}$ represents a matrix of full row rank,
where $e_i \ge 0$ for $i=1,2,\dots,n$.
The following lemma summarizes its properties; the proof is provided in Appendix.

\begin{lemma}\label{thm: ei=d'i}
The following properties hold:
\begin{enumerate}
\item \label{thm: Drref=E1~i}
The reduced row-echelon form of $D^{[i]}$ is given by
\begin{equation}\label{eqn: Dirref=E1~i}
\rref(D^{[i]})=
\begin{bmatrix}
E_1&*&\cdots&*\\
&E_2&\cdots&*\\
&&\ddots&\vdots\\
&&&E_i\\
\multicolumn{4}{c}{0}
\end{bmatrix},
\quad i=1,2,\dots,n;
\end{equation}
\item \label{item:ei=d'i}
$e_i = d'_i$ for $i=1,2,\dots,n$;
\item \label{item:E1<En}
$\im(E_1^\T) \subseteq \im(E_2^\T) \subseteq \cdots \subseteq \im(E_n^\T)$;
\item \label{item:d1<dn-dn-1}
$d_1 \le d_2 - d_1 \le d_3 - d_2 \le \cdots \le d_n - d_{n-1}$.
\end{enumerate}
\end{lemma}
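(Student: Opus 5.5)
The plan is to exploit two structural facts about $D^{[n]}$: its leading block columns contain the smaller Hankel matrices $D^{[i]}$, and its row space is invariant under a block right-shift. Write $D^{[n]}$ in block form, with the $(j,k)$ block equal to $D_{j+k-n}$ (understood as $0$ when $j+k-n\le0$). Then the first $mi$ columns of $D^{[n]}$ are exactly
\[
D^{[n]}[:,1\!:\!mi]=\begin{bmatrix}0_{m(n-i)\times mi}\\ D^{[i]}\end{bmatrix},
\]
since for block rows $j=n-i+1,\dots,n$ and block columns $k\le i$ the entry is $D_{(j-n+i)+k-i}$.

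\textbf{Item~\ref{thm: Drref=E1~i}.} I will use the standard fact that the reduced row-echelon form of a leading column submatrix equals the corresponding leading columns of the reduced row-echelon form of the whole matrix. Row operations act identically on all columns, and the pivots in the leading columns are determined by those columns alone. Hence $\rref(D^{[i]})$ coincides with $\rref([0;D^{[i]}])$, which is the first $mi$ columns of \eqref{eqn:rrefDniswhat}. Rows whose pivots lie beyond block column $i$ become zero in these columns, which gives \eqref{eqn: Dirref=E1~i}.

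\textbf{Item~\ref{item:ei=d'i}.} Each $E_k$ has full row rank, so \eqref{eqn: Dirref=E1~i} gives $d_i=\sum_{k=1}^i e_k$. Taking backward differences yields $e_i=\nabla d_i=d_i'$.

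\textbf{Item~\ref{item:E1<En}.} This is the main step. First I will characterize $\im(E_k^\T)$ intrinsically. Let $\mathcal R_k$ be the set of vectors in the row space of $D^{[n]}$ whose blocks $1,\dots,k-1$ vanish. The rows of $\rref(D^{[n]})$ with pivots in block columns $\ge k$ form a basis of $\mathcal R_k$. Projecting $\mathcal R_k$ onto block column $k$ therefore gives exactly the span of the rows of $E_k$, i.e.\ $\im(E_k^\T)$.

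Next, let $S$ be the operator on row vectors of length $mn$ that shifts right by one block, prepending $m$ zeros and discarding the last block. Block row $j$ of $D^{[n]}$ is $[0,\dots,0,D_1,\dots,D_j]$ with $n-j$ leading zero blocks. Applying $S$ gives $[0,\dots,0,D_1,\dots,D_{j-1}]$, which is block row $j-1$ (or zero when $j=1$). Since $S$ is linear, the row space of $D^{[n]}$ is $S$-invariant. Now take any $w\in\im(E_k^\T)$ with $k<n$ and a vector $x\in\mathcal R_k$ whose block $k$ equals $w$. Then $Sx\in\mathcal R_{k+1}$, and block $k+1$ of $Sx$ is $w$, so $w\in\im(E_{k+1}^\T)$. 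The point needing care is the basis claim for $\mathcal R_k$: a combination of rref rows that vanishes on blocks $<k$ cannot involve rows with pivots in those blocks, because pivot columns contain a single nonzero entry.

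\textbf{Item~\ref{item:d1<dn-dn-1}.} Item~\ref{item:E1<En} gives $e_k=\dim\im(E_k^\T)\le e_{k+1}$. Combined with item~\ref{item:ei=d'i}, namely $d_1=e_1$ and $d_k-d_{k-1}=e_k$, this yields the chain of inequalities. It also recovers Lemma~\ref{thm: invertible indices}\,(\ref{item:fai1<fain}) when $D_i=CA^{i-1}B$.
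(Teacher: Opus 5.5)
Your proposal is correct and follows essentially the same route as the paper. Items~1, 2 and 4 rest on the same leading-column property of the reduced row-echelon form applied to $D^{[n]}=\begin{bmatrix}0&D^{[n-i]}\\ D^{[i]}&*\end{bmatrix}$, followed by differencing. Your block-shift argument for Item~3 uses the same mechanism as the paper's placement of $\rref(D^{[i-1]})$ in block columns $2,\dots,i$ of a matrix row-equivalent to $D^{[i]}$, and your description of $\im(E_k^\T)$ as the block-$k$ projection of row-space vectors vanishing on blocks $1,\dots,k-1$ spells out the step the paper compresses into ``by combining''.
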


\begin{remark}
By identifying $D_1, D_2, \dots, D_n$ with $CB, CAB, \dots, CA^{n-1}B$,
\Cref{item:fai1<fain} of Lemma~\ref{thm: invertible indices} follows
directly from \Cref{item:d1<dn-dn-1} of \Cref{thm: ei=d'i}.
\end{remark}

Next, we introduce a specific structure of the row-echelon form.
The following lemma, whose proof is provided in Appendix,
is a direct consequence of Items~\ref{thm: Drref=E1~i} and \ref{item:E1<En} of \Cref{thm: ei=d'i}.

\begin{lemma}\label{thm: QD=F}
Let $i$ be any integer such that $1 \le i \le n$.
For each $j=1,2,\dots,i$,
let $f_j$ be an integer satisfying $0 \le f_j \le d''_j$,
and let $F_j \in \R^{f_j \times m}_{f_j}$ be any matrix
satisfying $\im(F_j^\T) \setminus \{0_m\} \subseteq \im(E_j^\T) \setminus \im(E_{j-1}^\T)$,
where $E_0 = \{0_m\}$.
Then, the following results hold:
\begin{enumerate}
\item\label{thm:existQ}
There exists a matrix $Q \in \R^{f \times mi}$ with $f = \Sigma_{j=1}^i f_j$
such that
\[
QD^{[i]}=\begin{bmatrix}
F_1&*&\cdots&*\\
&F_2&\cdots&*\\
&&\ddots&\vdots\\
&&&F_i
\end{bmatrix};
\]
\item\label{thm:imF=imE}
If $f_j = d''_j$ for $j=1,2,\dots,i$, then
$\im\begin{bmatrix}F_1^\T & F_2^\T & \cdots & F_i^\T\end{bmatrix}
= \im(E_i^\T)$;
\item\label{thm:Q1rank}
By partitioning $Q$ as $Q = \begin{bmatrix} * & Q_1 \end{bmatrix}$
where $Q_1 \in \R^{f \times m}$,
the submatrix $Q_1$ has full row rank.
\end{enumerate}
\end{lemma}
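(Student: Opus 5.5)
Since $\rref(D^{[i]})=PD^{[i]}$ for some invertible $P$ by \Cref{thm: Drref=E1~i} of \Cref{thm: ei=d'i}, every row of $\rref(D^{[i]})$ is a combination of rows of $D^{[i]}$, and the plan is to build $Q$ row block by row block from these rows.

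\emph{Item 1.} For each $j\le i$, the rows of $\rref(D^{[i]})$ in block row $j$ have the form $[\,0\ \cdots\ 0\ E_j\ *\ \cdots\ *\,]$, with $E_j$ in block column $j$; this is the structure displayed in \eqref{eqn: Dirref=E1~i}. Since $\im(F_j^\T)\subseteq\im(E_j^\T)$, there is a matrix $M_j\in\R^{f_j\times e_j}$ with $F_j=M_jE_j$. Multiplying block row $j$ of $\rref(D^{[i]})$ by $M_j$ gives a row block $[\,0\ \cdots\ 0\ F_j\ *\ \cdots\ *\,]$. Stacking these blocks for $j=1,\dots,i$ yields the required upper block-triangular matrix, and since each block is a left multiple of $D^{[i]}$, so is the stack; this defines $Q$. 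The exclusion hypothesis $\im(F_j^\T)\setminus\{0_m\}\subseteq\im(E_j^\T)\setminus\im(E_{j-1}^\T)$ is not needed here; it is used in Items 2 and 3.

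\emph{Item 2.} The hypothesis says $\im(F_j^\T)\cap\im(E_{j-1}^\T)=\{0\}$. By \Cref{item:ei=d'i} of \Cref{thm: ei=d'i}, $\dim\im(E_j^\T)-\dim\im(E_{j-1}^\T)=d_j'-d_{j-1}'=d_j''$ (with $d_0'=0$), and $F_j$ has rank $f_j=d''_j$. Hence $\im(E_{j-1}^\T)\oplus\im(F_j^\T)=\im(E_j^\T)$ for every $j$. Inducting on $j$, using the nesting in \Cref{item:E1<En}, the spans add up to $\im[F_1^\T\ \cdots\ F_i^\T]=\im(E_i^\T)$.

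\emph{Item 3.} This step is the one needing care. I would use the skew-Hankel structure: the last block column of $D^{[i]}$ is the stack of $D_1,\dots,D_i$, while the first block column is $[0;\dots;0;D_1]$. The block of $QD^{[i]}$ in column $1$ is $[F_1;0;\dots;0]$.

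The plan is to rewrite $Q$ by splitting it into its block columns, $Q=[Q^{(1)}\ \cdots\ Q^{(i)}]$ with $Q_1=Q^{(i)}$, and to argue through the shift structure: rows of $D^{[i]}$ indexed by block row $k$ equal the rows of $D^{[k]}$ padded with zeros on the left. So $Q^{(k)}$ contributes only to columns $\ge i-k+1$. Row block $j$ of $QD^{[i]}$ has leading part $F_j$ at column $j$ and zeros before it, which should force the coefficients on block rows $k>i-j+1$ to combine to zero in the early columns. The only contribution to column $1$ comes from $Q^{(i)}D_1$, so $Q_1D_1=[F_1;0;\dots;0]$ directly. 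More generally I would show by induction on $j$ that a vanishing row combination $xQ_1=0$ forces $xQ$ to yield a row $[0\ \cdots\ 0\ xF\ *]$ whose leading block would come from a strictly shorter Hankel matrix $D^{[i-1]}$. By \Cref{thm: Drref=E1~i}, such a block lies in $\im(E_{j-1}^\T)$, contradicting the independence from Item 2 unless $x=0$. Hence $Q_1$ has full row rank. I expect this shift and induction argument to be the main obstacle, and I would carry it out for a suitably chosen $Q$, namely the one built in Item 1 with rows taken from the echelon construction.
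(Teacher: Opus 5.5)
Your proposal is correct and follows the paper's own proof. Items~1 and~2 use the same construction $Q=(M_1\oplus\cdots\oplus M_i)Q_{\rref}$ and the same dimension count. For Item~3 the paper likewise splits $Q=\begin{bmatrix}\bar Q_1&Q_1\end{bmatrix}$ and uses $xQ_1=0$ together with $D^{[i]}=\begin{bmatrix}0&D^{[i-1]}\\ D_1&*\end{bmatrix}$ to get $xQD^{[i]}=\begin{bmatrix}0&x\bar Q_1D^{[i-1]}\end{bmatrix}$. It then writes $x\bar Q_1D^{[i-1]}$ as a combination $l$ of the nonzero rows of $\rref(D^{[i-1]})$ and matches blocks inductively to get $x_jF_j=l_{j-1}E_{j-1}$, which vanishes because $\im(F_j^\T)\cap\im(E_{j-1}^\T)=\{0\}$; your leading-block phrasing is the same mechanism.

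Two small fixes. First, that trivial intersection comes from the hypothesis itself, not from Item~2, whose assumption $f_j=d''_j$ is not part of Item~3. Second, your argument never uses how $Q$ was built, so prove Item~3 for every $Q$ satisfying the identity in Item~1, not just the one you constructed. The paper later applies it to an arbitrary, non-unique solution $R$ of $R\Qi^{[\nsharp]}=F$.
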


\section{Main results}

In this section, we state the determination method for
the Luenberger gain $L$ and the output transformation matrix $T_y$.
The proof for the determination method is provided in the next section.

For an invertible system, it follows from
Lemma~\ref{thm: invertible indices} that
$\faii^{[1]}\le\faii^{[2]}\le\cdots\le\faii^{[n]}=m$.
Then, an important number $\nsharp$ can be defined as
$\nsharp=\nsharp(A,B,C)=\min\big\{i\in\{1,2,\dots,n\}:\faii^{[i]}(A,B,C)=m\big\}$.

\begin{remark}\label{rmk: n* and nsharp}
The relation between $\nocstar$ and the newly defined $\nsharp$
can be summarized as follows.
Their proofs are in Appendix.
\begin{enumerate}
\item If the system \eqref{eqn: xdot=Ax+Bu, y=Cx} satisfies
\eqref{eqn: Zhou2023(8)(9)}, then $\nsharp=\nocstar$;
\item if the system \eqref{eqn: xdot=Ax+Bu, y=Cx} is invertible
and the $L\in\R^{n\times m}$ is chosen such that
there exists a $T_y\in\R^{m\times m}_m$
making the system $(A+LC,B,T_yC)$
have a regular relative degree,
then $\nocstar(A+LC,B,C)=\nsharp(A,B,C)$;
\item $\nsharp\ge \nocstar$;
\item $\nocstar=2$ if $\nsharp=2$ (see Lemma~\ref{thm: if nsharp=2 then n*=2}).
\end{enumerate}
\end{remark}

Firstly, we state the results for the cases $\nsharp=1,2$
in which the Luenberger gain $L$ doesn't need to be figured out.
Then, we introduce the cases $\nsharp\ge3$,
which is the main thread of this paper.
The case $\nsharp=1$ is quite simple and is stated in the following.

\begin{prop}
If the system \eqref{eqn: xdot=Ax+Bu, y=Cx} is invertible and
$\nsharp=1$, then this system has a regular relative degree.
\end{prop}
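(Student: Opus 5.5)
The plan is to read off the hypothesis $\nsharp=1$ directly. By definition of $\nsharp$, this means $\faii^{[1]}=m$. Since $\faii^{[1]}=\nabla\rank(\Qi^{[1]})=\rank(\Qi^{[1]})$ and $\Qi^{[1]}=CB$, we obtain $\rank(CB)=m$. Hence $CB\in\R^{m\times m}$ is nonsingular.

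Next I verify the relative degree directly from Definition~\ref{def:rd}. Write $C=\begin{bmatrix}c_1^\T&\cdots&c_m^\T\end{bmatrix}^\T$. Each row $c_iB$ of the nonsingular matrix $CB$ is nonzero, so the defining condition holds with $r_i=1$ for every $i$. The condition ``$c_iA^jB=0$ for $j=0,\dots,r_i-2$'' is vacuous when $r_i=1$. The relative degree is therefore $\{1,1,\dots,1\}$, and the coupling matrix is $\GAmma(A,B,C)=CB$. Since $\det(CB)\ne0$, the relative degree is regular.

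As a consistency check, the same conclusion follows from Lemma~\ref{thm: Zhou2023(8)(9)} with $T_y=I_m$. We have $\rank(\Qoc^{[1]})=m$, so $\nocstar=1$, and $\rank(\Qi^{[1]})=m=\rank(\Qoc^{[1]})$, so \eqref{eqn: Zhou2023(8)(9) n*} holds. The direct argument above, however, already gives regularity for the system itself without invoking any transformation, so it is the one I would present.

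There is no real obstacle here. The only step needing care is the identification $\faii^{[1]}=\rank(CB)$, which uses the convention $\nabla a_1=a_1$, together with the remark that the vacuous range of $j$ is allowed when $r_i=1$.
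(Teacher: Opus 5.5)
Your argument is correct and is essentially the paper's proof: $\nsharp=1$ means $\faii^{[1]}=\rank(CB)=m$, so the coupling matrix $\GAmma(A,B,C)=CB$ is nonsingular, and your explicit check that every $r_i=1$ spells out what the paper leaves implicit. One small caveat on your aside: Lemma~\ref{thm: Zhou2023(8)(9)} only guarantees the existence of \emph{some} $T_y$, not that $T_y=I_m$ works, so it is not an independent proof of the claim; this is why the direct argument is the one to present.
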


\begin{proof}
The condition $\nsharp=1$ means that $\Qi^{[1]}=CB$ is nonsingular,
implying that the coupling matrix $\GAmma(A,B,C)=CB$ is already nonsingular.
Hence, this system has a regular relative degree.
This finishes the proof.
\end{proof}

\subsection{Case $\nsharp=2$}

In this subsection, we investigate the case where $\nsharp=2$.
The following lemma, which also appears as
Item~4 of Remark~\ref{rmk: n* and nsharp},
is required for the subsequent analysis, and its proof is provided in Appendix.

\begin{lemma}\label{thm: if nsharp=2 then n*=2}
For the system \eqref{eqn: xdot=Ax+Bu, y=Cx},
if $\nsharp=2$, then $\nocstar=2$.
\end{lemma}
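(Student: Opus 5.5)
The plan is to bound $\rank(\Qoc^{[2]})$ from below using the rank of $\Qi^{[2]}$, and then to exclude $\nocstar=1$ directly. Recall that $\nocstar$ is the least $i$ with $\rank(\Qoc^{[i]})=m$. So it suffices to show two things: $\rank(\Qoc^{[2]})=m$, and $\rank(\Qoc^{[1]})<m$.

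\emph{Step 1: the first index is not $m$.} Since $\nsharp=2$ is the minimal index with $\faii^{[i]}=m$, we have $\faii^{[1]}\neq m$. Here $\faii^{[1]}=\nabla\rank(\Qi^{[1]})=\rank(CB)$, so $\rank(CB)<m$. Because $\Qoc^{[1]}=CB$, this gives $\rank(\Qoc^{[1]})<m$, and therefore $\nocstar\neq1$.

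\emph{Step 2: the second output controllability matrix has full rank.} From $\faii^{[2]}=m$ we get
\[
\rank(\Qi^{[2]})=\rank(CB)+m,
\qquad
\Qi^{[2]}=\begin{bmatrix}0&CB\\ CB&CAB\end{bmatrix}.
\]
The rank of a matrix is at most the sum of the ranks of its block rows. Applied to $\Qi^{[2]}$, this gives
\[
\rank(\Qi^{[2]})\le\rank\begin{bmatrix}0&CB\end{bmatrix}+\rank\begin{bmatrix}CB&CAB\end{bmatrix}=\rank(CB)+\rank(\Qoc^{[2]}).
\]
Combining the two displays yields $\rank(\Qoc^{[2]})\ge m$. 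Since $\Qoc^{[2]}$ has only $m$ rows, $\rank(\Qoc^{[2]})=m$, so $\nocstar\le2$. Together with Step 1, this gives $\nocstar=2$.

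There is no real obstacle here; the only point needing care is the subadditivity of rank over the block rows of $\Qi^{[2]}$. One should also note that $n\ge2$ is implicit in $\nsharp=2$, so $\Qoc^{[2]}$ is among the matrices over which $\nocstar$ is defined.
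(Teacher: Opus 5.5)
Your proof is correct and is essentially the paper's argument. The paper proves the general fact $\nsharp\ge\nocstar$ (its Item~3) from the block-row bound $\rank(\Qi^{[i]})\le\rank(\Qi^{[i-1]})+\rank(\Qoc^{[i]})$, then rules out $\nocstar=1$ using $\Qoc^{[1]}=\Qi^{[1]}=CB$; you do the same thing with the bound specialized directly to $i=2$.
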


The following result reveals that,
even without output injection,
an invertible system with $\nsharp=2$ already satisfies
the condition \eqref{eqn: Zhou2023(8)(9) n*}.

\begin{prop}\label{thm:ifnsharp=2}
If the system \eqref{eqn: xdot=Ax+Bu, y=Cx} is invertible and $\nsharp=2$,
then the system satisfies
\eqref{eqn: Zhou2023(8)(9) n*}.
\end{prop}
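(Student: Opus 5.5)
We need to show that $\rank(\Qoc^{[n]})=m$ and that $\rank(\Qi^{[\nocstar]})=\Sigma_{i=1}^{\nocstar}\rank(\Qoc^{[i]})$, where $\nocstar=2$ by Lemma~\ref{thm: if nsharp=2 then n*=2}.

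\textbf{First condition.} Since $\nocstar=2$, we have $\rank(\Qoc^{[2]})=m$. Because $\Qoc^{[2]}$ is made of columns of $\Qoc^{[n]}$, this gives $\rank(\Qoc^{[n]})=m$.

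\textbf{Second condition.} It reads $\rank(\Qi^{[2]})=\rank(\Qoc^{[1]})+\rank(\Qoc^{[2]})$. Write
\[
\Qi^{[2]}=\begin{bmatrix}0&CB\\ CB&CAB\end{bmatrix}.
\]
Its first block row is $[0\ \ CB]$, of rank $\rank(CB)=\rank(\Qoc^{[1]})$. Its second block row is $\Qoc^{[2]}$, of rank $m$. Hence
\[
\rank(\Qi^{[2]})\le\rank(CB)+m,
\]
and the task is to prove the reverse inequality.

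\textbf{Reverse inequality.} Here we use $\nsharp=2$, i.e.\ $\faii^{[2]}=m$. By definition $\faii^{[2]}=\rank(\Qi^{[2]})-\rank(\Qi^{[1]})$, so
\[
\rank(\Qi^{[2]})=m+\rank(CB)=\rank(\Qoc^{[2]})+\rank(\Qoc^{[1]}).
\]
This is exactly the required equality. Note that $\nsharp=2$ also entails $\faii^{[1]}<m$, so $CB$ is singular, but this is not needed for the identity.

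\textbf{Main obstacle.} The argument is short once the lemma $\nocstar=2$ is available, and it is given. The only delicate point is that the definition of $\nsharp$ forces $\faii^{[2]}=m$ exactly, rather than merely $\faii^{[n]}=m$, and that step is immediate from the definition of $\nsharp$.
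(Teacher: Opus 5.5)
Your proof is correct and is essentially the paper's argument: use Lemma~\ref{thm: if nsharp=2 then n*=2} to get $\rank(\Qoc^{[2]})=m$, then combine it with $\faii^{[2]}=m$ to obtain $\rank(\Qi^{[2]})=\rank(CB)+\rank(\Qoc^{[2]})$. The upper bound $\rank(\Qi^{[2]})\le\rank(CB)+m$ is harmless but unnecessary, since the equality follows directly from the definition of $\faii^{[2]}$.
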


\begin{proof}
The condition $\nsharp=2$ implies
\begin{equation}\label{eqn: faii2=m}
\faii^{[2]}=\rank\begin{bmatrix}&CB\\ CB&CAB\end{bmatrix}-\rank(CB)=m.
\end{equation}
By Lemma~\ref{thm: if nsharp=2 then n*=2},
it follows that $\nocstar=2$, which means that
\begin{equation}\label{eqn: rank Qoc2=m}
\rank\begin{bmatrix}CB&CAB\end{bmatrix}=m.
\end{equation}
Combining \eqref{eqn: faii2=m}
and \eqref{eqn: rank Qoc2=m} yields
\begin{equation}\label{eqn: rank Qi2=rank Qoc1+rank Qoc2}
\rank\begin{bmatrix}&CB\\ CB&CAB\end{bmatrix}=\rank(CB)
+\rank\begin{bmatrix}CB&CAB\end{bmatrix}.
\end{equation}
The conjunction of \eqref{eqn: rank Qoc2=m} and
\eqref{eqn: rank Qi2=rank Qoc1+rank Qoc2}
indicates that the system \eqref{eqn: xdot=Ax+Bu, y=Cx} satisfies
\eqref{eqn: Zhou2023(8)(9) n*}.
This completes the proof.
\end{proof}

We consider the system under both output injection and transformation, namely, the system $(A+LC,B,T_yC)$.

\begin{theorem}
\label{thm: (A,B,TyC) and (A+LC,B,TyC), Ty the same}
Consider the invertible system \eqref{eqn: xdot=Ax+Bu, y=Cx}
with $\nsharp=2$.
Let $T_y\in\R^{m\times m}_m$ be an output transformation matrix.
Then for any Luenberger gain $L\in\R^{n\times m}$,
the system $(A,B,T_yC)$ has a regular relative degree
if and only if
the system $(A+LC,B,T_yC)$ has a regular relative degree.
\end{theorem}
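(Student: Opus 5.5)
The plan is to compare the coupling matrices of $(A,B,T_yC)$ and $(A+LC,B,T_yC)$ directly, row by row, and to show that when $\nsharp=2$ they differ only by elementary row operations. Write $T_yC=\begin{bmatrix}c_1^\T&\cdots&c_m^\T\end{bmatrix}^\T$. By \eqref{eqn: rank Qi A+LC}, the invertibility indices, hence $\nsharp$ and invertibility, are unchanged by output injection. So $(A+LC,B,C)$ is again invertible with $\nsharp=2$. Moreover $A=(A+LC)+(-L)C$. Hence it suffices to prove one implication: if $(A,B,T_yC)$ has a regular relative degree, then so does $(A+LC,B,T_yC)$ for every $L$. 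The converse then follows by applying this implication to $(A+LC,B,C)$ with gain $-L$.

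First I will record the structure forced by $\nsharp=2$. By Lemma~\ref{thm: if nsharp=2 then n*=2}, $\nocstar=2$, so $\rank[CB\ \ CAB]=m$. Since $T_y$ is nonsingular, $T_yC[B\ \ AB]$ has full row rank. Thus every row satisfies $c_jB\ne0$ or $c_jAB\ne0$, and every relative degree $r_j$ of $(A,B,T_yC)$ lies in $\{1,2\}$. Let $J_1=\{j:c_jB\ne0\}$ and $J_2=\{j:c_jB=0\}$. The rows of $\GAmma(A,B,T_yC)$ are $c_jB$ for $j\in J_1$ and $c_jAB$ for $j\in J_2$. The key observation is that the nonzero rows of $T_yCB$ are exactly the rows indexed by $J_1$. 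Hence these rows span $\imr{(T_yCB)}=\imr{(CB)}$.

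Now assume $\GAmma(A,B,T_yC)$ is nonsingular, and compute the relative degree of $(A+LC,B,T_yC)$. Rows in $J_1$ still have $c_jB\ne0$, so they keep degree $1$ and the same coupling row $c_jB$. For $j\in J_2$ we have $c_jB=0$ and
\[
c_j(A+LC)B=c_jAB+(c_jL)CB .
\]
The correction $(c_jL)CB$ lies in $\imr{(CB)}$. Since $\GAmma(A,B,T_yC)$ is nonsingular, the rows $\{c_kB\}_{k\in J_1}$ are linearly independent and, by the observation above, span $\imr{(CB)}$. So $(c_jL)CB=\sum_{k\in J_1}\alpha_{jk}c_kB$ for some scalars $\alpha_{jk}$. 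Consequently, the matrix $\tilde\GAmma$ whose rows are $c_jB$ for $j\in J_1$ and $c_j(A+LC)B$ for $j\in J_2$ is obtained from $\GAmma(A,B,T_yC)$ by adding to each $J_2$-row a linear combination of $J_1$-rows. Therefore $\det\tilde\GAmma=\det\GAmma(A,B,T_yC)\ne0$.

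The remaining step is to make sure $\tilde\GAmma$ really is the coupling matrix of $(A+LC,B,T_yC)$, i.e.\ that the relative degrees are $\{1,2\}$ in the same pattern. This is the only point needing care. Since $\tilde\GAmma$ is nonsingular, each of its rows is nonzero. Thus $c_j(A+LC)B\ne0$ for $j\in J_2$, and with $c_jB=0$ this gives $r_j=2$ for the new system. The relative degree is therefore the same and its coupling matrix equals $\tilde\GAmma$, which is nonsingular, completing the argument.
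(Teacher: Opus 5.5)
Your proposal is correct and follows essentially the same route as the paper. Both arguments show that the coupling matrix of $(A+LC,B,T_yC)$ is that of $(A,B,T_yC)$ premultiplied by a unit lower-triangular matrix, because each correction term $c_jLCB$ lies in the span of the degree-one rows of $T_yCB$. The paper writes this as $T_2CLT_y^{-1}T_yCB=T_3T_1CB$, after a WLOG row ordering $T_y=[T_1^\T\ T_2^\T]^\T$.

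The differences are cosmetic. You use index sets $J_1,J_2$ instead of a block partition. You also justify the converse explicitly via $A=(A+LC)+(-L)C$ and the invariance of $\nsharp$ under output injection, which the paper omits as a ``similar argument.''
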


\begin{proof}
We focus on proving the necessity, as the sufficiency follows by a similar argument and is thus omitted for brevity.

According to \Cref{thm: if nsharp=2 then n*=2}, the condition $\nsharp=2$ implies $\nocstar=2$, which yields
\[
\rank(T_yCB)<m, \ \rank\begin{bmatrix}T_yCB & T_yCAB\end{bmatrix}=m.
\]
Consequently, the relative degree of the system $(A,B,T_yC)$ must be either $\{1,\dots,1,2,\dots,2\}$ or $\{2,\dots,2\}$.
Suppose the relative degree is
\[
\{\underbrace{1,\dots,1}_{n_1\,\mathrm{times}}, \underbrace{2,\dots,2}_{n_2\,\mathrm{times}}\},
\text{ where $n_1$ may be $0$.}
\]
Without loss of generality, let $T_y$ be partitioned as $T_y=\begin{bmatrix}T_1\\ T_2\end{bmatrix}$, where $T_1\in\R^{n_1\times m}$ and $T_2\in\R^{n_2\times m}$.
According to the definition of the regular relative degree (\Cref{def:rd}), it holds that $T_2CB=0$, and the coupling matrix
$\GAmma(A,B,T_yC)=\begin{bmatrix}T_1CB\\ T_2CAB\end{bmatrix}$
is nonsingular. By noting that $T_yCB = \begin{bmatrix}T_1CB \\ 0\end{bmatrix}$ and partitioning $T_2CLT_y^{-1}$ as $\begin{bmatrix}T_3 & *\end{bmatrix}$ where $T_3\in\R^{n_2\times n_1}$, we obtain
\begin{align*}
T_2C(A+LC)B
&= T_2CAB + T_2CLCB
= T_2CAB + T_2CLT_y^{-1}T_yCB \\
&= T_2CAB + \begin{bmatrix}T_3 & *\end{bmatrix} \begin{bmatrix}T_1CB \\ 0\end{bmatrix}
= T_2CAB + T_3T_1CB.
\end{align*}
Since $\begin{bmatrix}T_1CB \\ T_2CAB\end{bmatrix}$ is nonsingular, it follows that each row of $T_2CAB + T_3T_1CB$ is non-zero.
Thus, the coupling matrix is
\[
\GAmma(A+LC,B,T_yC) = \begin{bmatrix}T_1CB \\ T_2C(A+LC)B\end{bmatrix} = \begin{bmatrix}I_{n_1} & \\ T_3 & I_{n_2}\end{bmatrix} \begin{bmatrix}T_1CB \\ T_2CAB\end{bmatrix},
\]
and it is also nonsingular. This implies that the system $(A+LC,B,T_yC)$ also possesses a regular relative degree, which completes the proof.
\end{proof}

Theorem~\ref{thm: (A,B,TyC) and (A+LC,B,TyC), Ty the same} demonstrates that, for the case $\nsharp=2$,
the selection of the output transformation
matrix $T_y$ is decoupled from
the choice of the Luenberger gain $L$.
The following proposition, providing an equivalent description of
this result, follows immediately.

\begin{prop}\label{thm: nsharp=2 Ty L no}
Consider the invertible system \eqref{eqn: xdot=Ax+Bu, y=Cx}
where $\nsharp=2$.
Let $T_y\in\R^{m\times m}_m$ be an output transformation matrix.
Then for any Luenberger gains $L_1,L_2\in\R^{n\times m}$,
the system $(A+L_1C,B,T_yC)$ has a regular relative degree
if and only if
the system $(A+L_2C,B,T_yC)$ has a regular relative degree.
\end{prop}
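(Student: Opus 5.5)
The plan is to route both directions through the unperturbed system $(A,B,T_yC)$, using Theorem~\ref{thm: (A,B,TyC) and (A+LC,B,TyC), Ty the same} twice: once for the original system and once for a re-based copy.

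For the first leg, apply Theorem~\ref{thm: (A,B,TyC) and (A+LC,B,TyC), Ty the same} with the original data $(A,B,C)$, the fixed $T_y$, and $L=L_1$. This gives: $(A+L_1C,B,T_yC)$ has a regular relative degree if and only if $(A,B,T_yC)$ does.

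For the second leg, the same argument with $L=L_2$ gives: $(A+L_2C,B,T_yC)$ has a regular relative degree if and only if $(A,B,T_yC)$ does. Chaining the two equivalences through $(A,B,T_yC)$ yields the claim.

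The only point to check is that Theorem~\ref{thm: (A,B,TyC) and (A+LC,B,TyC), Ty the same} really covers both directions for an arbitrary $L$, since the paper omitted the sufficiency part. If one prefers not to rely on that omitted half, one can re-base instead. Set $A'=A+L_1C$ and $L'=L_2-L_1$, so that $A'+L'C=A+L_2C$. Then apply only the proven necessity direction of the theorem to the system $(A',B,C)$.

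For that application, $(A',B,C)$ must satisfy the theorem's hypotheses.
\begin{itemize}
\item It is invertible, because output injection does not change the transfer function's normal rank. This follows from \eqref{eqn: rank Qi A+LC} together with Item~\ref{item:inv<=>fain=m} of Lemma~\ref{thm: invertible indices}.
\item It satisfies $\nsharp(A',B,C)=2$. Indeed, \eqref{eqn: rank Qi A+LC} preserves every $\rank(\Qi^{[i]})$, hence every invertibility index $\faii^{[i]}$, hence $\nsharp$.
\end{itemize}
The necessity direction then gives: if $(A',B,T_yC)$ has a regular relative degree, so does $(A'+L'C,B,T_yC)=(A+L_2C,B,T_yC)$. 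By symmetry, re-basing at $A+L_2C$ with gain $L_1-L_2$ gives the converse.

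This re-basing step, and the invariance check that $\nsharp=2$ is preserved, is the only substantive part. Everything else is bookkeeping.
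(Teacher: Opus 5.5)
Your proposal is correct and matches the paper, which says this proposition follows immediately from Theorem~\ref{thm: (A,B,TyC) and (A+LC,B,TyC), Ty the same}: you apply it with $L_1$ and with $L_2$ and chain the two equivalences through $(A,B,T_yC)$. Your re-basing variant is also sound: take $A'=A+L_1C$ and $L'=L_2-L_1$, note that invertibility and $\nsharp=2$ are preserved by \eqref{eqn: rank Qi A+LC}, and swap roles for the converse. This makes the argument rest only on the direction the paper actually proves, and it is in effect the ``similar argument'' the paper omits for the other half.
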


\subsection{Cases $\nsharp \ge 3$}

In this subsection, we investigate the cases where $\nsharp \ge 3$.
We first partition the reduced row-echelon form of the invertibility matrix $\Qi^{[\nsharp]}$ as
\[
\rref(\Qi^{[\nsharp]})=
\ \ \begin{NiceArray}{cccc}
E_1&*&\cdots&*\\
&E_2&\cdots&*\\
&&\ddots&\vdots\\
&&&E_{\nsharp}\\
0&0&\cdots&0\\
m&m&\cdots&m
\CodeAfter\SubMatrix[{1-1}{5-4}]
\tikz \draw (2-|1) -- (2-|5);
\tikz \draw (3-|1) -- (3-|5);
\tikz \draw (4-|1) -- (4-|5);
\tikz \draw (5-|1) -- (5-|5);
\tikz \draw (1-|2) -- (6-|2);
\tikz \draw (1-|3) -- (6-|3);
\tikz \draw (1-|4) -- (6-|4);
\end{NiceArray}\ \ .
\]
Here, $E_i$ for $i=1,2,\dots,\nsharp$ each has $m$ columns
and possesses full row rank.
Note that the invertibility matrix $\Qi^{[\nsharp]}$
is a block skew lower-triangular Hankel matrix.
Consequently, based on the results from Subsection \ref{sec: Hankel},
the following properties hold:
\begin{enumerate}
\item According to \Cref{item:ei=d'i} of \Cref{thm: ei=d'i},
one has $E_i \in \R^{\faii^{[i]} \times m}_{\faii^{[i]}}$
for $i=1,2,\dots,\nsharp$.
In particular, $E_{\nsharp} \in \R^{m \times m}_m$ is nonsingular.
\item From \Cref{item:E1<En} of \Cref{thm: ei=d'i}, it follows that
$\im(E_1^\T) \subseteq \im(E_2^\T) \subseteq \cdots \subseteq \im(E_{\nsharp}^\T)$.
\item
Define
\begin{equation}\label{eqn: define psii}
\psi_i = \nabla\faii^{[i]},\quad i=1,2,\dots,\nsharp.
\end{equation}
By \Cref{item:fai1<fain} of Lemma~\ref{thm: invertible indices},
it follows that $\psi_i \ge 0$ for $i=1,2,\dots,\nsharp$.
By \Cref{thm:existQ} of Lemma~\ref{thm: QD=F},
given $F_i \in \R^{\psi_i \times m}_{\psi_i}$ satisfying $\im(F_i^\T) \setminus \{0_m\} \subseteq \im(E_i^\T) \setminus \im(E_{i-1}^\T)$ (with $E_0 = \{0_m\}$),
there exists a matrix $R \in \R^{m \times m\nsharp}$
such that
\begin{equation}\label{eqn: RQi=G1~n}
R\Qi^{[\nsharp]}=\begin{bmatrix}
F_1&*&\cdots&*\\
&F_2&\cdots&*\\
&&\ddots&\vdots\\
&&&F_{\nsharp}
\end{bmatrix}=F \in \R^{m \times m\nsharp}.
\end{equation}
Note that $R$ and $F$ in \eqref{eqn: RQi=G1~n} are not unique.
\item From \Cref{thm:imF=imE} of Lemma~\ref{thm: QD=F},
the matrix $\begin{bmatrix} F_1^\T & F_2^\T & \cdots & F_{\nsharp}^\T \end{bmatrix}$
is nonsingular.
\item
Partition $R$ as
\begin{equation}\label{eqn: partition R}
R = \begin{bmatrix} R_{\nsharp} & \cdots & R_2 & R_1 \end{bmatrix},
\end{equation}
where $R_i \in \R^{m \times m}$ for $i=1,2,\dots,\nsharp$.
By \Cref{thm:Q1rank} of \Cref{thm: QD=F}, $R_1$ is nonsingular.
\end{enumerate}
For $i=2,3,\dots,n$, define the following sequence of matrices:
\begin{equation*}
\Pio^{[i]}(L)
= \begin{bmatrix}
I_m & & & & \\
-CL & I_m & & & \\
-CAL & -CL & I_m & & \\
\vdots & \ddots & \ddots & \ddots & \\
-CA^{i-2}L & \cdots & -CAL & -CL & I_m \\
\end{bmatrix}
\in \R^{mi \times mi}.
\end{equation*}
Furthermore, let
\begin{equation}\label{eqn: define S}
S(L) = R \Pio^{[\nsharp]} (I_{\nsharp} \otimes R_1^{-1})
\in \R^{m \times m\nsharp},
\end{equation}
which is partitioned as
\begin{equation}\label{eqn: partition S}
S(L) = \begin{bmatrix}
S_{\nsharp}(L) & \cdots & S_3(L) & S_2(L) & I_m
\end{bmatrix},
\end{equation}
where $S_i(L) \in \R^{m \times m}$ for $i=2,3,\dots,\nsharp$.
For brevity, we may denote $S_i(L)$ as $S_i$.
Using the notation $\psi_{i \sim j} = \Sigma_{k=i}^j \psi_k$ (for $i \le j$),
we define:
\begin{align*}
\PHi_{i \sim j}
&= \begin{bmatrix}
0_{\psi_{i \sim j} \times \psi_{1 \sim (i-1)}}
& I_{\psi_{i \sim j}}
& 0_{\psi_{i \sim j} \times \psi_{(j+1) \sim \nsharp}}
\end{bmatrix} \in \R^{\psi_{i \sim j} \times m},\\
\PSi_{i}
&= \begin{bmatrix}
0_{\psi_{1 \sim (i-1)} \times \psi_i} \\
I_{\psi_i} \\
0_{\psi_{(i+1) \sim \nsharp} \times \psi_i}
\end{bmatrix} \in \R^{m \times \psi_{i}}.
\end{align*}

The main result of this paper---the determination method of
the Luenberger gain $L$ for $\nsharp \ge 3$---is stated below.
Its proof is comprehensive and is deferred to the next section.

\begin{theorem}\label{thm: theorem}
Consider the invertible system \eqref{eqn: xdot=Ax+Bu, y=Cx}
where $\nsharp \ge 3$.
Then there exists an output transformation matrix $T_y \in \R^{m \times m}_m$
such that the
new system $(A+LC, B, T_yC)$
has a regular relative degree if and only if
the Luenberger gain $L \in \R^{n \times m}$ satisfies
the following linear matrix equations:
\begin{equation}\label{eqn: main result}
\PHi_{(i+j)\sim \nsharp} S_i(L) \PSi_{j} = 0,\quad
j=1,2,\dots,\nsharp-i,\
i=2,3,\dots,\nsharp-1.
\end{equation}
\end{theorem}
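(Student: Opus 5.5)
The approach is to translate the nonlinear requirement on $L$ into a statement about Markov parameters, using the fact that output injection acts on the transfer function by a left factor. Write $M_k(L)=C(A+LC)^kB$. From $(sI-A-LC)^{-1}$ and the push-through identity one gets $G(s)=(I_m-C(sI-A)^{-1}L)\,G_L(s)$, where $G_L$ is the transfer function of $(A+LC,B,C)$. Comparing coefficients of $s^{-k-1}$ gives
\[
CA^kB=M_k(L)-\Sigma_{j=0}^{k-1}CA^jL\,M_{k-1-j}(L).
\]
In block form, this is a lower block-triangular Toeplitz relation whose factor is exactly $\Pio^{[\nsharp]}(L)$. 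Combining it with \eqref{eqn: RQi=G1~n}, I would first establish the key identity: the row combination $R$ that produces the staircase $F$ from $\Qi^{[\nsharp]}(A,B,C)$ becomes, after injection, the combination $R\Pio^{[\nsharp]}(L)$ acting on the Markov parameters $M_k(L)$. Hence $S(L)$ in \eqref{eqn: define S} records how the rows of $F$, normalized by $R_1^{-1}$, are expressed through $M_0(L),\dots,M_{\nsharp-1}(L)$. A crucial observation is that $S(L)$ is \emph{affine} in $L$, since $\Pio^{[\nsharp]}$ is affine in $L$. This is why \eqref{eqn: main result} is linear.

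Next I would reformulate the target property. By Lemma~\ref{thm: Zhou2023(8)(9)} and Remark~\ref{rmk: n* and nsharp} (Items~1 and~2), $(A+LC,B,T_yC)$ has a regular relative degree for some $T_y$ exactly when there are $\psi_k$ output rows with relative degree $k$ for each $k\le\nsharp$ and the coupling matrix is nonsingular. Here $\psi_k=\nabla\faii^{[k]}$ counts outputs of relative degree exactly $k$, and the invertibility indices are invariant under $L$ by \eqref{eqn: rank Qi A+LC}. Equivalently, $\rank(\Qoc^{[k]}(A+LC,B,C))$ must equal $\faii^{[k]}$ for every $k\le\nsharp$. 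The natural candidate is $T_y=\PHi$-blocks of $R_1^{-1}$-type rows. Concretely, I would take the $\psi_k$ rows of $T_y$ to be $\PHi_{k\sim k}$ applied to the row combination encoded by $S$. Because $\begin{bmatrix}F_1^\T&\cdots&F_{\nsharp}^\T\end{bmatrix}$ is nonsingular (Lemma~\ref{thm: QD=F}, Item~\ref{thm:imF=imE}), this $T_y$ is invertible.

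For sufficiency, I would check that \eqref{eqn: main result} is precisely the vanishing of the off-staircase blocks. These are the coefficients by which the rows assigned to relative degree $i+j,\dots,\nsharp$ would see $M_{\cdot}(L)B$-terms through columns belonging to degree $j$. Once they vanish, those rows satisfy $t\,C(A+LC)^{r}B=0$ for $r$ below their degree, and the leading terms reproduce the $F_k$, which gives a nonsingular $\GAmma$. For necessity, I would take any admissible $T_y$ and use the uniqueness of the staircase structure to reduce it, by block lower-triangular row operations that preserve regularity, to the canonical form above. The tool here is Lemma~\ref{thm: ei=d'i}, especially the nested row spaces $\im(E_k^\T)$ and $e_k=\faii^{[k]}$. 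Regularity then forces the same blocks of $S_i(L)$ to vanish.

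The main obstacle is this necessity direction. An arbitrary $T_y$ and an arbitrary choice of $R,F$ must be shown to yield the same conditions. In particular, the conditions must not depend on the nonunique $R$ and $F_k$. I would handle this by showing that two choices differ by a block upper-triangular invertible factor compatible with the $\PHi$/$\PSi$ partition. Such a factor maps the zero pattern in \eqref{eqn: main result} to itself.
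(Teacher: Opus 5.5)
\textbf{Main gap: the necessity direction.} You propose to take an admissible $T_y$ and reduce it ``to the canonical form above'', i.e.\ to $R_1$. That presupposes $R_1$ is admissible whenever some $T_y$ is, which is exactly what has to be shown. Uniqueness of the echelon form (Lemma~\ref{thm:rrdrefresult}) only says that admissible $T_y$ have the form $PUT_{\mathrm{rref}}$. There $T_{\mathrm{rref}}$ is built from $\Qoc^{[n]}(A+LC,B,C)$, which depends on $L$. Relating it to the $L$-independent $R_1$ is the substance of the theorem.

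Lemma~\ref{thm: ei=d'i} cannot do this work either. The nesting $\im(E_1^\T)\subseteq\cdots\subseteq\im(E_{\nsharp}^\T)$ concerns $\Qi^{[\nsharp]}$, which does not see $L$. What detects the conditions is the opposite property: under regularity, the leading-block row spaces of $\Qoc^{[\nsharp]}(A+LC,B,C)$ intersect trivially.

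The missing mechanism is an induction over block columns. Set $\Ao=A+LC$, $H_1=F_1$, and $H_k=F_k-\Sigma_{j=2}^{k}S_j|^{k+1-j}_kH_{k+1-j}$. Assume $R_1|_{j\sim\nsharp}C\Ao^{j-1}B=\begin{bmatrix}H_j\\ 0\end{bmatrix}$ for $j\le i$, together with the conditions already obtained. Then the identity $S\,\Qi^{[\nsharp]}(\Ao,B,R_1C)=F$ gives
\[
R_1|_{i+1\sim\nsharp}C\Ao^{i}B=\begin{bmatrix}H_{i+1}\\ X\end{bmatrix},\qquad
X=-\Sigma_{j=2}^{i+1}S_j|^{i+2-j}_{i+2\sim\nsharp}H_{i+2-j}.
\]
Here the rows of $X$ lie in $\Sigma_{k\le i}\im(H_k^\T)$. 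Regularity forces $X=0$. The paper gets this from $\det(G)\ne0$ via Lemma~\ref{thm: image cap=0}, but your own reformulation also suffices:
\begin{itemize}
\item the inductive staircase structure and the joint independence of the rows of $H_1,\dots,H_{i+1}$ give $\rank(\Qoc^{[i+1]}(\Ao,B,R_1C))=\faii^{[i+1]}+\rank(X)$;
\item existence of $T_y$ forces this rank to equal $\faii^{[i+1]}$.
\end{itemize}
Independence of the rows of $H_1,\dots,H_i$ then yields $S_j|^{i+2-j}_{i+2\sim\nsharp}=0$ for $j=2,\dots,i+1$. This is one new batch of the equations \eqref{eqn: main result} per step. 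Without this step, ``regularity then forces the same blocks of $S_i(L)$ to vanish'' is an assertion, not an argument.

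\textbf{Secondary issues in sufficiency.} Your sufficiency sketch is on the paper's track. You use the relation $\Qi^{[\nsharp]}(A,B,C)=\Pio^{[\nsharp]}(L)\Qi^{[\nsharp]}(A+LC,B,C)$, which your transfer-function factorization derives neatly, and the resulting identity $S\,\Qi^{[\nsharp]}(\Ao,B,R_1C)=F$. It needs the same induction, and a few details need fixing:
\begin{itemize}
\item The candidate is simply $T_y=R_1$, not something built from $R_1^{-1}$. Its invertibility comes from Item~\ref{thm:Q1rank} of Lemma~\ref{thm: QD=F}, not from nonsingularity of the $F$-stack.
\item The leading blocks are the $H_k$, not the $F_k$. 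The blocks $S_j|^{k+1-j}_k$ are not constrained by \eqref{eqn: main result}. The coupling matrix $\begin{bmatrix}H_1^\T&\cdots&H_{\nsharp}^\T\end{bmatrix}^\T$ is nonsingular because it equals a block unit lower-triangular matrix times $\begin{bmatrix}F_1^\T&\cdots&F_{\nsharp}^\T\end{bmatrix}^\T$.
\item Your worry about dependence on $(R,F)$ is moot. Proving the equivalence for an arbitrary fixed admissible choice already shows the conditions are choice-independent.
\item The remedy you propose for that worry is doubtful anyway. $R$ is determined only modulo the left null space of $\Qi^{[\nsharp]}$, which is nontrivial since $\faii^{[1]}<m$. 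This is an additive ambiguity that also changes $R_1$, not a block-triangular factor.
\end{itemize}
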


The determination method of the output transformation matrix $T_y$
is introduced below, following the results in \cite{Cao2025rrdref[J]}.
The corresponding proof is also provided in the next section.

\begin{theorem}\label{thm: theorem solve Ty}
For an invertible system \eqref{eqn: xdot=Ax+Bu, y=Cx} where $\nsharp \ge 3$,
if the Luenberger gain $L$ satisfies \eqref{eqn: main result},
then the new system $(A+LC, B, T_yC)$
has a regular relative degree if and only if
the output transformation matrix $T_y \in \R^{m \times m}_m$ satisfies
\begin{equation}\label{eqn: Ty=PUR1}
T_y = PUR_1,
\end{equation}
where $P \in \R^{m \times m}$ is an arbitrary permutation matrix
and $U \in \R^{m \times m}$ takes the form
\begin{equation*}
U = \begin{bmatrix}
U_1 & * & \cdots & * \\
& U_2 & \cdots & * \\
& & \ddots & \vdots \\
& & & U_{\nsharp}
\end{bmatrix},
\text{ with $U_i \in \R^{\psi_i \times \psi_i}_{\psi_i}$ for $i=1,2,\dots,\nsharp$.}
\end{equation*}
\end{theorem}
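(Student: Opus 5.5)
We need to propose a proof of Theorem (solve Ty): given L satisfying the main equations, $(A+LC,B,T_yC)$ has a regular relative degree iff $T_y = PUR_1$.

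The plan is to reduce the statement to the output-transformation characterization of \cite{Cao2025rrdref[J]} applied to the injected system $(A+LC,B,C)$. The key object is the first block row of the output controllability structure of that system. I will first compute $\Qi^{[\nsharp]}(A+LC,B,C)$ in terms of $\Qi^{[\nsharp]}(A,B,C)$. Since $C(A+LC)^{k}B$ expands as $CA^kB$ plus terms $CA^{j}L\,C(A+LC)^{k-j-1}B$, one checks by induction on $k$ that
\[
\Pio^{[\nsharp]}(L)\,\Qi^{[\nsharp]}(A+LC,B,C)=\Qi^{[\nsharp]}(A,B,C).
\]
This identity is the same mechanism behind \eqref{eqn: rank Qi A+LC}. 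Multiplying by $R$ then gives $S(L)(I_{\nsharp}\otimes R_1)\Qi^{[\nsharp]}(A+LC,B,C)=F$, which can be read from left to right as follows.

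The last block row of $\Qi^{[\nsharp]}(A+LC,B,C)$ is $\Qoc^{[\nsharp]}(A+LC,B,C)$. The row combination $S(L)(I\otimes R_1)$ has trailing block $R_1$ and earlier blocks $S_i R_1$.

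\textbf{Step 1.} Use the linear equations \eqref{eqn: main result} to show that the rows $\PHi_{k\sim k}R_1$ of $R_1$ annihilate $C(A+LC)^{j-1}B$ for $j<k$. I also need $\PHi_{k\sim k}R_1C(A+LC)^{k-1}B$ to equal $F_k$ plus contributions from lower groups. The idea is to compare block columns of $S(L)(I\otimes R_1)\Qi^{[\nsharp]}(A+LC,B,C)=F$ one at a time, inducting on the column index. The vanishing pattern $\PHi_{(i+j)\sim\nsharp}S_i\PSi_j=0$ is exactly what kills the cross terms that would otherwise contaminate rows of group $k$ in columns $j<k$.

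\textbf{Step 2.} Conclude that $(A+LC,B,R_1C)$ has relative degree $\{1,\dots,1,2,\dots,\nsharp\}$, with $\psi_k$ entries equal to $k$. Its coupling matrix is block upper triangular with diagonal blocks built from the $F_k$ restricted appropriately. By Item~\ref{thm:imF=imE} of Lemma~\ref{thm: QD=F}, $\begin{bmatrix}F_1^\T&\cdots&F_{\nsharp}^\T\end{bmatrix}$ is nonsingular, so the coupling matrix is nonsingular. This settles sufficiency for $T_y=R_1$.

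\textbf{Step 3.} Extend sufficiency from $R_1$ to $PUR_1$. A permutation $P$ only reorders outputs, so it is harmless. Multiplying by a block upper triangular $U$ with invertible diagonal blocks mixes each group-$k$ output row only with rows of higher groups. Those rows have larger relative degree, so relative degrees are preserved and the coupling matrix is multiplied by a block upper triangular invertible matrix. Hence it stays nonsingular.

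\textbf{Step 4 (necessity).} Suppose $T_y$ gives a regular relative degree. By Remark~\ref{rmk: n* and nsharp}, the relative degree multiset of $(A+LC,B,T_yC)$ must have exactly $\psi_k$ entries equal to $k$, because these counts are determined by the rank increments of $\Qoc^{[i]}$, which equal $\faii^{[i]}$ in the regular case. After applying a permutation $P^{-1}$, order the rows by relative degree. For each $k$, the group-$k$ rows of $T_yR_1^{-1}$ must annihilate the outputs of $(A+LC,B,R_1C)$ of degree $<k$, restricted to the relevant Markov parameters. By Step 1 those outputs have independent leading terms, which forces the block lower-left entries of $P^{-1}T_yR_1^{-1}$ to vanish. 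Nonsingularity of $T_y$ then forces the diagonal blocks $U_k$ to be invertible.

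I expect the bookkeeping in Step 1 to be the main obstacle: inducting over block columns and showing that \eqref{eqn: main result} exactly cancels the off-pattern terms. I would do that part first, since Steps 2 through 4 follow from it.
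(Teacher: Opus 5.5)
Your proposal is correct. Its core, Step~1, is the paper's sufficiency induction for Theorem~\ref{thm: theorem}, which shows $R_1|_{k\sim\nsharp}C\Ao^{k-1}B=\begin{bmatrix}H_k\\ 0\end{bmatrix}$ with $H_k$ defined by \eqref{eqn: define H_i}. You then handle $T_y$ by a genuinely different route. The paper never verifies anything about $PUR_1$ directly. Once $R_1\Qoc^{[\nsharp]}(\Ao,B,C)$ is a staircase with nonsingular stacked diagonal $\begin{bmatrix}H_1^\T&\cdots&H_{\nsharp}^\T\end{bmatrix}^\T$, it invokes Lemma~\ref{thm: solve Ty from an okay T}. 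That lemma uses uniqueness of the reduced row-echelon form to transfer the external characterization $T_y=PUT_\mathrm{rref}$ (Item~\ref{thm: solve Ty} of Lemma~\ref{thm:rrdrefresult}) to $T_y=PUR_1$, giving both directions at once. You instead argue directly from the staircase. For sufficiency, the group-$k$ rows of $UR_1C$ vanish against $C\Ao^{j}B$ for $j<k-1$ and give $U_kH_k$ against $C\Ao^{k-1}B$. Hence the coupling matrix is $\mathrm{diag}(U_1,\dots,U_{\nsharp})$ times that of $(\Ao,B,R_1C)$; strictly it is this block-diagonal factor rather than $U$ itself, since the higher-group rows of $R_1C\Ao^{k-1}B$ are zero. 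For necessity, block elimination against the full-row-rank $H_l$ forces $V_{kl}=0$ for $l<k$ in $V=P^{-1}T_yR_1^{-1}$. Your route is self-contained, effectively reproves the needed special case of \cite{Cao2025rrdref[J]}, and yields Proposition~\ref{thm: relative degree of (A+LC,B,TyC)} along the way. The paper's route is shorter but rests on that external result. Two points should be tightened. First, in Step~2 the coupling matrix $\begin{bmatrix}H_1^\T&\cdots&H_{\nsharp}^\T\end{bmatrix}^\T$ is not itself block triangular; it is nonsingular because it equals a block unit lower-triangular matrix times $\begin{bmatrix}F_1^\T&\cdots&F_{\nsharp}^\T\end{bmatrix}^\T$. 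Second, in Step~4 you need the count of exactly $\psi_k$ rows of relative degree $k$, so that the diagonal blocks of $V$ are square, and this should be justified explicitly. One option is to use invariance of $\faii^{[i]}$ under output injection and output transformation, together with $\faii^{[i]}=\rank(\Qoc^{[i]})$ for $(\Ao,B,T_yC)$ under \eqref{eqn: Zhou2023(8)(9)} and $\rank(\Qoc^{[i]})=\#\{k:r_k\le i\}$ under a regular relative degree. Alternatively, get it from the elimination itself: rows of degree $k$ contribute $V_{kk}H_k$ to the nonsingular coupling matrix, so there are at most $\psi_k$ of them. Moreover, a row of degree exceeding $\nsharp$ would have every block $V_{kl}$ forced to zero.
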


From Theorems~\ref{thm: theorem} and~\ref{thm: theorem solve Ty},
noting that $R_1$ is independent of $L$,
we obtain the following result, which demonstrates that
the choices of $L$ and $T_y$ are decoupled.

\begin{prop}\label{thm: nsharp=3 Ty L no}
Consider the invertible system \eqref{eqn: xdot=Ax+Bu, y=Cx}
where $\nsharp \ge 3$.
Let $T_y \in \R^{m \times m}_m$ be an output transformation matrix.
Then for any Luenberger gains $L_1, L_2 \in \R^{n \times m}$
satisfying \eqref{eqn: main result},
the system $(A+L_1C, B, T_yC)$ has a regular relative degree
if and only if
the system $(A+L_2C, B, T_yC)$ has a regular relative degree.
\end{prop}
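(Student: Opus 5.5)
Proposition~\ref{thm: nsharp=3 Ty L no} follows directly from Theorem~\ref{thm: theorem solve Ty}. The one extra observation needed is that the factor $R_1$ in \eqref{eqn: Ty=PUR1}, and the block sizes $\psi_i$ that shape $U$, depend only on $(A,B,C)$ and not on $L$. So the plan is to show that, for every admissible $L$, the set of good output transformations is a single fixed set $\mathcal T$, and then compare.

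\textbf{Step 1: the set $\mathcal T$ does not depend on $L$.}
\begin{itemize}
\item[(a)] The matrix $R$ is built in \eqref{eqn: RQi=G1~n} from $\Qi^{[\nsharp]}(A,B,C)$. Its construction uses only the reduced row-echelon form of $\Qi^{[\nsharp]}$ and the chosen $F_i$. Hence $R$, and therefore its last block $R_1$ in \eqref{eqn: partition R}, are determined by the original system alone.
\item[(b)] The indices $\psi_i=\nabla\faii^{[i]}(A,B,C)$ and $\nsharp(A,B,C)$ are likewise intrinsic to $(A,B,C)$. So the block-upper-triangular structure allowed for $U$ does not involve $L$.
\item[(c)] There is a subtlety: $R$ is not unique. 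To avoid it, I fix one choice of $R$ (hence of $R_1$) once and for all. Theorem~\ref{thm: theorem solve Ty} is stated for any such choice, so the characterization \eqref{eqn: Ty=PUR1} holds with this fixed $R_1$ for every admissible $L$.
\end{itemize}
With $R_1$ fixed, define
\[
\mathcal T=\{PUR_1:\ P \text{ a permutation matrix},\ U \text{ of the stated block form}\},
\]
which is independent of $L$.

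\textbf{Step 2: apply Theorem~\ref{thm: theorem solve Ty} to each gain.} Let $L_1,L_2$ both satisfy \eqref{eqn: main result}. By Theorem~\ref{thm: theorem solve Ty} applied to $L_1$, the system $(A+L_1C,B,T_yC)$ has a regular relative degree if and only if $T_y\in\mathcal T$. Applying the same theorem to $L_2$ gives the same equivalence with the same set $\mathcal T$. Chaining the two equivalences yields the claim.

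\textbf{Main obstacle.} The only delicate point is the possible dependence of $R_1$ on hidden data, namely the non-uniqueness of $R$ and $F$. If one insists that $R$ may vary, I would instead argue that the set $\{UR_1\}$ is the same for every valid $R$. For this I would use that $R_1$ maps the rows of $F$, which are ordered by the flag $\im(E_1^\T)\subseteq\cdots\subseteq\im(E_{\nsharp}^\T)$, compatibly. Then two valid choices $R_1,R_1'$ should differ by left multiplication by a block upper-triangular invertible matrix, and such a matrix is absorbed into $U$.

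Fixing $R$ at the outset, however, sidesteps this issue. That choice is legitimate because the statement concerns only existence of regularity for a given $T_y$, not any particular representation.
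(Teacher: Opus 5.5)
Your proposal is correct and follows essentially the same route as the paper, which obtains the proposition directly from Theorems~\ref{thm: theorem} and~\ref{thm: theorem solve Ty}. The key observation in both is that $R_1$ and the block sizes $\psi_i$ shaping $U$ depend only on $(A,B,C)$, so the set $\{PUR_1\}$ of admissible $T_y$ is the same for every gain satisfying \eqref{eqn: main result}. Fixing one $R$ at the outset is a legitimate way to handle non-uniqueness, and your flag argument is unnecessary: for any fixed admissible $L$, Theorem~\ref{thm: theorem solve Ty} already forces $\{PUR_1\}$ to coincide for all valid choices of $R$.
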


\begin{remark}
In contrast to Proposition~\ref{thm: nsharp=2 Ty L no},
the Luenberger gains $L_1, L_2$
in Proposition~\ref{thm: nsharp=3 Ty L no} must satisfy \eqref{eqn: main result}.
\end{remark}

The following result reveals that if the system $(A+LC, B, T_yC)$
possesses a regular relative degree, its relative degree is
uniquely determined by $\psi_i$ for $i=1,2,\dots,\nsharp$,
and thus by the invertibility indices $\faii^{[i]}$.
The proof is presented in the next section.

\begin{prop}\label{thm: relative degree of (A+LC,B,TyC)}
For an invertible system \eqref{eqn: xdot=Ax+Bu, y=Cx} with $\nsharp\ge 3$,
if $L \in \R^{n \times m}$ and $T_y \in \R^{m \times m}_m$
satisfy \eqref{eqn: main result} and \eqref{eqn: Ty=PUR1}, respectively,
then the relative degree of the system $(A+LC, B, T_yC)$ is unique
up to the rearrangement of the following set:
\begin{equation}\label{eqn: relative degree psi1...psinsharp}
\{\underbrace{1,\dots,1}_{\psi_1\,\mathrm{times}},
\underbrace{2,\dots,2}_{\psi_2\,\mathrm{times}},\dots,
\underbrace{\nsharp,\dots,\nsharp}_{\psi_{\nsharp}\,\mathrm{times}}\},
\end{equation}
where $i \in \{1,2,\dots,\nsharp\}$ is absent if $\psi_i=0$.
\end{prop}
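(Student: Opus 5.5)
Since $T_y=PUR_1$ and a permutation $P$ only reorders the outputs (and hence the relative degrees), we may take $P=I_m$ and show that the system $(A+LC,B,UR_1C)$ has relative degree $r_k=i$ for every row $k$ of the $i$-th block row of $U$. Such rows exist exactly when $\psi_i>0$, which yields the multiset \eqref{eqn: relative degree psi1...psinsharp}.

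The plan is to compute the Markov parameters $UR_1C(A+LC)^{j}B$ and match them against the block structure in \eqref{eqn: RQi=G1~n}. First, using $C(A+LC)^{j}B$ and the standard identity $\Qoc^{[i]}(A+LC,B,C)=\Pio^{[i]}(L)$-type relations, write
\[
R_1\begin{bmatrix}C(A+LC)^{\nsharp-1}B&\cdots&CB\end{bmatrix}
\]
in terms of $S(L)$ and $F$. Concretely, the row $\big[\,*\ \cdots\ *\ R_1\big]$ applied to the invertibility matrix of $(A+LC,B,C)$ is exactly $S(L)(I_{\nsharp}\otimes R_1)$ times $\Qi^{[\nsharp]}(A+LC,B,C)$, reorganised via $\Pio^{[\nsharp]}(L)$. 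Equating this with $F$ and using the linear equations \eqref{eqn: main result} to kill the cross terms, I expect to obtain: rows of $R_1C(A+LC)^{j-1}B$ lying in the block $\PHi_{i\sim i}$ vanish for $j<i$, and at $j=i$ they equal $\PHi_{i\sim i}$ applied to the $i$-th block column of $F$ plus lower-order corrections. In other words, $\PHi_{i\sim\nsharp}R_1C(A+LC)^{j-1}B=0$ for $j\le i-1$. I would take this fact directly from the proof of Theorem~\ref{thm: theorem}, where it is presumably the key intermediate statement.

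Next, multiply by the block upper triangular $U$. Row $k$ of block $i$ of $UR_1C$ is a combination of the rows of blocks $i,i+1,\dots,\nsharp$ of $R_1C$, with a nonzero contribution from block $i$ coming from $U_i$. Hence its Markov parameters vanish for $j\le i-1$, so $r_k\ge i$. For the $i$-th parameter, stack the rows: the coupling-type matrix has block rows $U_i\PHi_{i\sim i}(\cdots)$ plus contributions from higher blocks. By the previous step, those higher-block contributions vanish at order $i$, since blocks $i'>i$ are zero up to order $i'-1\ge i$. So block $i$ of the candidate coupling matrix equals $U_iF_i$ up to addition of rows of $F_{i'}$ with $i'<i$. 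These added rows come from the correction terms and lie in $\im(E_{i-1}^\T)$.

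Finally, the stacked matrix $\begin{bmatrix}(U_1F_1)^\T&\cdots&(U_{\nsharp}F_{\nsharp})^\T\end{bmatrix}^\T$, modified by these block lower triangular row operations, is nonsingular by Item~\ref{thm:imF=imE} of Lemma~\ref{thm: QD=F}. Each row of it is therefore nonzero, which gives $r_k=i$ exactly. Its nonsingularity is the coupling matrix, confirming regularity, consistent with Theorem~\ref{thm: theorem solve Ty}.

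The main obstacle is the first step: precisely identifying $R_1C(A+LC)^{j}B$ through $S(L)$ and showing that \eqref{eqn: main result} forces the block-triangular vanishing pattern. The equations $\PHi_{(i+j)\sim\nsharp}S_i\PSi_j=0$ say exactly that the $S_i$ are block "upper triangular shifted by $i$." I would prove the vanishing by induction on $j$, expanding $(A+LC)^{j}$ and substituting the relations \eqref{eqn: RQi=G1~n} row-block by row-block.
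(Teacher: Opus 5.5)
Your proposal is correct, and its core is the paper's argument. The fact you plan to borrow from the proof of Theorem~\ref{thm: theorem} is exactly \eqref{eqn:R1=Hi0}, namely $R_1|_{i\sim\nsharp}C\Ao^{i-1}B=\begin{bmatrix}H_i\\ 0|_{i+1\sim\nsharp}\end{bmatrix}$. The paper obtains it by the same induction on \eqref{eqn: important} that you sketch. Your ``$F_i$ plus rows from $\im(E_{i-1}^\T)$'' is precisely $H_i$ from \eqref{eqn: define H_i}.

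You differ from the paper only in the last step. The paper reads off the relative degree of $(A+LC,B,R_1C)$ alone. It then invokes Item~3 of Lemma~\ref{thm:rrdrefresult}, the uniqueness result from Cao et al., to transfer that relative degree to every admissible $T_y$.

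You instead push the block structure through $T_y=PUR_1$ directly. Block-$i$ rows of $UR_1C$ involve only blocks $i'\ge i$ of $R_1C$, and $R_1|_{i'}C\Ao^{k}B=0$ for $k\le i'-2$. So the Markov parameters vanish for $k<i-1$, and at $k=i-1$ only $U_iH_i$ survives. Hence the coupling matrix is $P(U_1\oplus\cdots\oplus U_{\nsharp})\begin{bmatrix}H_1^\T&\cdots&H_{\nsharp}^\T\end{bmatrix}^\T$.

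What each route buys:
\begin{itemize}
\item Yours is self-contained. It also re-proves, without Lemma~\ref{thm: solve Ty from an okay T}, that every $PUR_1$ yields a regular relative degree, which is the ``if'' half of Theorem~\ref{thm: theorem solve Ty}.
\item The paper's is shorter given the cited lemma. It also yields the relative-degree set for any $T_y$ achieving regularity without using that $T_y$'s explicit form.
\end{itemize}

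A minor point: the identity involving $\Pio^{[i]}(L)$ is stated for $\Qi^{[i]}$, not $\Qoc^{[i]}$. Your next sentence does use it correctly.
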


It is worth noting that Exercise 4.32 of \cite{Zhoubook}
can be viewed as a corollary of Proposition~\ref{thm: relative degree of (A+LC,B,TyC)}.
The determination method for the Luenberger gain $L$ and
the output transformation matrix $T_y$ for $\nsharp \ge 3$
is summarized in Algorithm~1.

\noindent\rule{\linewidth}{1pt}
\noindent\textbf{Algorithm 1}: Determine all possible $L$ and $T_y$

\begin{algorithmic}

\State \textbf{Step 1:} Determine $\nsharp$:
\For{$i=1$ to $n$}
  \If{$\faii^{[i]}=m$}
    \State $\nsharp \gets i$
    \State \textbf{break}
  \EndIf
\EndFor

\If{$\nsharp=1$ or $2$}
  \State \Return \Comment{Terminate: Trivial case encountered}
\Else
  \State \textbf{Step 2} \Comment{Naturally proceeds to the next step}
\EndIf

\State \textbf{Step 2:} Calculate $\rref(\Qi^{[\nsharp]})$ and determine
$F_i$, $i=1,2,\dots,\nsharp$

\State \textbf{Step 3:} Select the rows of $\rref(\Qi^{[\nsharp]})$ to obtain $F$

\State \textbf{Step 4:} Solve the linear matrix equation
$R\Qi^{[\nsharp]}=F$ to obtain $R$ and $R_1$

\State \textbf{Step 5:} Determine the general form of all possible $T_y$
according to~\eqref{eqn: Ty=PUR1}

\State \textbf{Step 6:} Compute
$\PHi_{(i+j)\sim \nsharp} S_i \PSi_j$
in~\eqref{eqn: main result} by
$S = R\Pio^{[\nsharp]}(I_{\nsharp}\otimes R_1^{-1})$

\State \textbf{Step 7:} Solve the linear matrix equation
\eqref{eqn: main result} to obtain $L$

\end{algorithmic}

\vspace{-0.5em}\noindent\rule{\linewidth}{1pt}\vspace{-1em}

\section{Numerical Example}

Consider the system \eqref{eqn: xdot=Ax+Bu, y=Cx} from \cite{Fomichev2017DE[J]} with $n=9$ and $m=3$.
The coefficient matrices are given by
\begin{align*}
&\left[\begin{array}{c|c}
	A&B\\ \hline
	C&
\end{array}\right]
=\left[\begin{array}{ccccccccc|ccc}
0&1&0&0&0&0&0&0&0 & 0&0&1\\
0&0&1&0&0&0&0&0&0 & 0&0&0\\
1&0&1&0&0&0&0&1&0 & 1&0&0\\
0&0&0&0&1&0&0&0&0 & 0&0&0\\
0&0&0&0&0&1&0&0&0 & 0&0&1\\
0&0&0&0&1&0&1&1&1 & 0&1&0\\
0&0&0&0&0&0&0&1&0 & 0&0&0\\
0&0&0&0&0&0&0&0&1 & 0&0&0\\
0&1&0&0&0&1&0&0&0 & 0&0&1\\
\hline
1&0&0&0&0&0&0&0&0 & &&\\
0&0&0&1&0&0&0&0&0 & &&\\
0&0&0&0&0&0&1&0&0 & &&\\
\end{array}\right].
\end{align*}
The reduced row-echelon form of the output controllability matrix $\Qoc^{[2]}$ is
\[
\rref(\Qoc^{[2]})
=\rref\begin{bmatrix}CB&CAB\end{bmatrix}
=\left[\begin{array}{ccc|ccc}
0&0&1 & 0&0&0\\
0&0&0 & 0&0&1\\
0&0&0 & 0&0&0
\end{array}\right].
\]
According to \Cref{thm: rrd iff detG not 0} of \Cref{thm:rrdrefresult},
there is no output transformation matrix $T_y$ that enables
the transformed system $(A,B,T_yC)$ to possess a regular relative degree.
\\
\textbf{Step 1: Determine $\nsharp$}
\\
The ranks of the invertibility matrices are
\[\rank(\Qi^{[1]})=1,
\rank(\Qi^{[2]})=2,
\rank(\Qi^{[3]})=4,
\rank(\Qi^{[4]})=6,
\rank(\Qi^{[5]})=9.\]
Consequently, the invertibility indices are
\[\faii^{[1]}=1,\
\faii^{[2]}=1,\
\faii^{[3]}=2,\
\faii^{[4]}=2,\
\faii^{[5]}=3=m.\]
Based on Item~\ref{item:inv<=>fain=m} of Lemma~\ref{thm: invertible indices},
the system is invertible with $\nsharp=5$.
\\
\textbf{Step 2: Obtain Reduced Row-echelon Form}
\\
The reduced row-echelon form of the invertibility matrix
$\Qi^{[\nsharp]}=\Qi^{[5]}$ is
\begin{equation}\label{eqn:ex rrefQisharp}
\rref(\Qi^{[5]})=
\begin{bNiceArray}{>{\strut}ccccccccccccccc}%
[create-extra-nodes,extra-margin=2pt,code-after = {
	\tikz{
		\draw[name suffix = -large]
			(1-1.south west) -| (2-4.south west) -| (4-7.south west)
			-| (6-10.south west) -| (9-13.south west) -- (9-15.south east);
		\foreach \row in {1,3,7} {
			\node[anchor=west, xshift=5pt] at (\row-15.east) {$\checkmark$};
		}
	}
}]
0&0&1&0&0&0&1&0&0&1&0&0&0&0&0\\
0&0&0&0&0&1&0&0&0&1&0&0&0&0&0\\
0&0&0&0&0&0&0&1&0&-1&0&0&0&0&0\\
0&0&0&0&0&0&0&0&1&0&0&0&0&0&0\\
0&0&0&0&0&0&0&0&0&0&1&0&0&0&0\\
0&0&0&0&0&0&0&0&0&0&0&1&0&0&0\\
0&0&0&0&0&0&0&0&0&0&0&0&1&0&0\\
0&0&0&0&0&0&0&0&0&0&0&0&0&1&0\\
0&0&0&0&0&0&0&0&0&0&0&0&0&0&1\\
\multicolumn{15}{c}{0_{6\times 15}}
\end{bNiceArray}\ \ \ .
\end{equation}
We determine:
$E_1=E_2=\begin{bmatrix}0&0&1\end{bmatrix}$,
$E_3=E_4=\begin{bmatrix}0&1&0\\ 0&0&1\end{bmatrix}$, and
$E_5=I_3$.
\\
\textbf{Step 3: Calculation of $F$}
\\
Note that
$\psi_1=1$,
$\psi_2=0$,
$\psi_3=1$,
$\psi_4=0$,
$\psi_5=1$.
By selecting the 1st, 3rd, and 7th rows of $\rref(\Qi^{[5]})$ (marked with ``$\checkmark$'' in \eqref{eqn:ex rrefQisharp}),
there exists a matrix $R$ such that $R\Qi^{[5]}=F$, where
\[
F=
\begin{bNiceArray}{>{\strut}ccccccccccccccc}%
[create-extra-nodes,extra-margin=2pt,
code-after = {\tikz\draw[name suffix = -large]
(1-1.south west) -| (2-7.south west) -| (3-13.south west)
|- (3-15.south east);}]
0&0&1&0&0&0&1&0&0&1&0&0&0&0&0\\
0&0&0&0&0&0&0&1&0&-1&0&0&0&0&0\\
0&0&0&0&0&0&0&0&0&0&0&0&1&0&0
\end{bNiceArray}.
\]
Note that $F_1=\begin{bmatrix}0&0&1\end{bmatrix}$,
$F_3=\begin{bmatrix}0&1&0\end{bmatrix}$,
$F_5=\begin{bmatrix}1&0&0\end{bmatrix}$,
and $F_2$, $F_4$ are empty.
\\
\textbf{Step 4: Solving for Matrix $R$}
\\
Solving $R\Qi^{[5]}=F$ for $R$ yields
\begin{align*}
\begin{array}{l}
R=\left[\begin{array}{ccc|ccc|ccc|ccc|ccc}
-\frac23&0&0 & -1&-\frac23&0 & 0&0&-\frac23 & 0&1&0 & 1&0&-1\\
0&0&0 & 0&0&0 & 0&-1&0 & -1&-1&-1 & 0&1&1\\
0&0&0 & 0&0&0 & 0&0&0 & 0&-1&0 & 0&0&1
\end{array}\right]\\[15pt]
\hspace{18mm}R_5\hspace{16mm}R_4\hspace{15mm}R_3
\hspace{20mm}R_2\hspace{16mm}R_1
\end{array}\!\!\!.
\end{align*}
Note that $R_1$ is nonsingular, consistent with \Cref{thm:Q1rank} of \Cref{thm: QD=F}.
\\
\textbf{Step 5: Determining the Output Transformation Matrix $T_y$}
\\
From Theorem~\ref{thm: theorem solve Ty}, the solutions for $T_y$ are
\begin{equation*}
\begin{aligned}
T_y&=PUR_1
=P\begin{bmatrix}u_1&v_1&v_2\\ &u_3&v_3\\ &&u_5\end{bmatrix}
\begin{bmatrix}1&0&-1\\ 0&1&1\\ 0&0&1\end{bmatrix}
=P\begin{bmatrix}
u_1&v_1&-u_1+v_1+v_2\\
0&u_3&u_3+v_3\\
0&0&u_5
\end{bmatrix},
\end{aligned}
\end{equation*}
where $u_1,u_3,u_5\in\R$, $u_1,u_3,u_5\ne0$
and $v_1,v_2,v_3\in\R$ are arbitrary numbers.
This can be compactly written as
$T_y=P\begin{bmatrix}t_1&*&*\\ &t_2&*\\ &&t_3\end{bmatrix}$ with $t_i\in\R$, $t_i \neq 0$ for $i=1,2,3$.
Consistent with Proposition~\ref{thm: nsharp=3 Ty L no}, $T_y$ is independent of $L$.
\\
\textbf{Step 6: Construction of the Luenberger Gain Equations}
\\
Recalling the definition of $S(L)$ in \eqref{eqn: define S} and its partition in \eqref{eqn: partition S}, we have
\[
S(L) = R\Pio^{[5]}[I_5\otimes(R_1)^{-1}] = \begin{bmatrix}S_5&S_4&S_3&S_2&I_m\end{bmatrix},
\]
which expands to
\begin{align*}
&\begin{bmatrix}S_5&S_4&S_3&S_2&I_m\end{bmatrix}
=\begin{bmatrix}R_5&R_4&R_3&R_2&R_1\end{bmatrix}\\
&\times
\begin{bmatrix}
I_3&&&&\\
-CL&I_3&&&\\
-CAL&-CL&I_3&&\\
-CA^2L&-CAL&-CL&I_3&\\
-CA^3L&-CA^2L&-CAL&-CL&I_3\\
\end{bmatrix}
\begin{bmatrix}
R_1^{-1}&&&&\\
&R_1^{-1}&&&\\
&&R_1^{-1}&&\\
&&&R_1^{-1}&\\
&&&&R_1^{-1}
\end{bmatrix}.
\end{align*}
Specifically,
\begin{align*}
S_2
&
= (R_2 - R_1CL)R_1^{-1},
\\
S_3
&
= (R_3 - R_2CL - R_1CAL)R_1^{-1},
\\
S_4
&
= (R_4 - R_3CL - R_2CAL - R_1CA^2L)R_1^{-1}.
\end{align*}
Now we are ready to calculate the Luenberger gain $L=(l_{ij})_{9\times3}$.
It follows from \Cref{thm: theorem} that $L$ satisfies
\begin{equation}\label{eqn: example n=9 m=3 solve L}
\begin{gathered}
\PHi_{3\sim5}S_2\PSi_1=0,\
\PHi_{4\sim5}S_2\PSi_2=0,\
\PHi_5S_2\PSi_3=0,\\
\PHi_{4\sim5}S_3\PSi_1=0,\
\PHi_5S_3\PSi_2=0,\
\PHi_5S_4\PSi_1=0.
\end{gathered}
\end{equation}
By noting that $\psi_2=0$,
it follows that $\PSi_2$ is empty,
thus $\PHi_{4\sim5}S_2\PSi_2$ and $\PHi_5S_3\PSi_2$ are empty.
Since
\begin{gather*}
\PHi_{3\sim5}S_2\PSi_1=S_2[2\!:\!3,1],\
\PHi_5S_2\PSi_3=S_2[3,2],
\\
\PHi_{4\sim5}S_3\PSi_1=S_3[3,1],\
\PHi_5S_4\PSi_1=S_4[3,1],
\end{gather*}
then \eqref{eqn: example n=9 m=3 solve L} can be expressed as
\begin{equation*}
S_2[2\!:\!3,1]=0,\ S_2[3,2]=0,\ S_3[3,1]=0,\ S_4[3,1]=0,
\end{equation*}
where
\begin{align*}
S_2[2\!:\!3,1]
&=(
R_2[2\!:\!3,:]-R_1[2\!:\!3,:]CL
)R_1^{-1}[:,1]
=\begin{bmatrix}-1-l_{41}-l_{71}\\ -l_{71}\end{bmatrix},\\
S_2[3,2]
&=(
R_2[3,:]-R_1[3,:]CL
)R_1^{-1}[:,2]
=-1-l_{72},\\
S_3[3,1]
&=(
R_3[3,:]-R_2[3,:]CL-R_1[3,:]CAL
)R_1^{-1}[:,1]
=l_{41}-l_{81},\\
S_4[3,1]
&=(
R_4[3,:]-R_3[3,:]CL-R_2[3,:]CAL-R_1[3,:]CA^2L
)R_1^{-1}[:,1]\\
&=l_{51}-l_{91}.
\end{align*}
\textbf{Step 7: Final Solution for $L$}
\\
The constraints for $L$ are $l_{41}=-1$, $l_{72}=-1$, $l_{81}=-1$, $l_{71}=0$, and $l_{51}-l_{91}=0$.
The general form of $L$ is
\[
L=\begin{bmatrix}
*&*&*&-1&a&*&0&-1&-a\\
*&*&*&*&*&*&-1&*&*\\
*&*&*&*&*&*&*&*&*
\end{bmatrix}^\T,
\]
where $a \in \R$ and ``$*$'' denotes arbitrary values.

\section{Proofs of Main Theorems}

\subsection{Preparation}

\subsubsection{Criterion for the existence of a regular relative degree}

Here, we review the foundational results established in \cite{Cao2025rrdref[J]}.
Consider the partition of the reduced row-echelon form of the output controllability matrix $\Qoc^{[n]}(A,B,C)$ as follows,
\begin{equation}\label{eqn:Qoc rref}
T_\mathrm{rref}\Qoc^{[n]}
=\rref(\Qoc^{[n]})
=\ \ \begin{NiceArray}{ccccWc{17pt}}
G_1&*&\cdots&*&g_1\\
&G_2&\cdots&*&g_2\\
&&\ddots&\vdots&\vdots\\
&&&G_{n}&g_n\\
0&0&\cdots&0&\\
m&m&\cdots&m&
\CodeAfter\SubMatrix[{1-1}{5-4}][right-xshift=5pt]
\tikz \draw (2-|1) -- (2-|5);
\tikz \draw (3-|1) -- (3-|5);
\tikz \draw (4-|1) -- (4-|5);
\tikz \draw (5-|1) -- (5-|5);
\tikz \draw (1-|2) -- (6-|2);
\tikz \draw (1-|3) -- (6-|3);
\tikz \draw (1-|4) -- (6-|4);
\end{NiceArray},
\end{equation}
where $T_\mathrm{rref} \in \R_m^{m \times m}$ is the corresponding transformation matrix
and $G_i \in \R_{g_i}^{g_i \times m}$ for $i=1,2,\dots,n$.
Note that $g_i=0$ is possible for certain indices.
Let $G = \begin{bmatrix} G_1^\T & G_2^\T & \cdots & G_n^\T \end{bmatrix}^\T$.
The primary findings of \cite{Cao2025rrdref[J]} are summarized as follows.

\begin{lemma}\label{thm:rrdrefresult}
\emph{\cite{Cao2025rrdref[J]}}
For the system \eqref{eqn: xdot=Ax+Bu, y=Cx}, the following properties hold:
\begin{enumerate}

\item\label{thm: rrd iff detG not 0}
There exists an output transformation matrix $T_y \in \R^{m \times m}_m$
such that the transformed system $(A,B,T_yC)$
has a regular relative degree if and only if
\begin{equation}\label{eqn: sum gi=m and G invertible}
\Sigma_{i=1}^n g_i = m, \quad \det(G) \neq 0.
\end{equation}

\item\label{thm: solve Ty}
Suppose that \eqref{eqn: sum gi=m and G invertible} is satisfied.
Then, for any output transformation matrix $T_y \in \R_m^{m \times m}$,
the new system $(A,B,T_yC)$ has a regular relative degree if and only
if $T_y$ takes the form
\begin{equation}\label{eqn:Ty=PUTrref}
T_y = PUT_\mathrm{rref},
\end{equation}
where $P \in \R^{m \times m}_m$ is an arbitrary permutation matrix and
\begin{equation}\label{eqn: U=?}
U = \begin{bmatrix}
	U_1 & * & \cdots & * \\
	& U_2 & \cdots & * \\
	& & \ddots & \vdots \\
	& & & U_n
\end{bmatrix},
\text{ with $U_i \in \R^{g_i \times g_i}_{g_i}, \ i=1,2,\dots,n$.}
\end{equation}

\item\label{thm: relative degree=?}
Suppose that \eqref{eqn: sum gi=m and G invertible} is satisfied
and that $T_y \in \R^{m \times m}_m$ satisfies \eqref{eqn:Ty=PUTrref}.
Then, the relative degree of the system $(A,B,T_yC)$ is unique
up to the rearrangement of the set
$\{\underbrace{1,\dots,1}_{g_1\,\mathrm{times}},
\underbrace{2,\dots,2}_{g_2\,\mathrm{times}},\dots,
\underbrace{n,\dots,n}_{g_n\,\mathrm{times}}\}$,
where the element $i$ is absent if $g_i=0$.

\end{enumerate}
\end{lemma}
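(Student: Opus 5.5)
The plan is to read everything off the filtration of row vectors by the first Markov parameter they do not annihilate. For $i=1,\dots,n+1$ let
\[
W_i=\{w\in\R^{1\times m}: wCA^{j}B=0,\ j=0,1,\dots,i-2\},
\]
so $W_1=\R^{1\times m}$, $W_{i+1}\subseteq W_i$, and $\dim W_i=m-\rank(\Qoc^{[i-1]})$. From the block structure of \eqref{eqn:Qoc rref}, a row of $T_\mathrm{rref}$ lying in block $i$ satisfies $T_\mathrm{rref}[k,:]\,CA^{j}B=0$ for $j<i-1$, and $T_\mathrm{rref}[k,:]\,CA^{i-1}B$ is the corresponding row of $G_i$. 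Each $G_i$ has full row rank, and the rref pivot structure gives $g_i=\rank(\Qoc^{[i]})-\rank(\Qoc^{[i-1]})$. From these two facts I expect to show that the block-$i$ rows of $T_\mathrm{rref}$ have relative degree exactly $i$, and that the rows in blocks $\ge i$ together with the rows of $T_\mathrm{rref}$ in the zero part span $W_i$. The first step is to record this cleanly.

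\textbf{Item~\ref{thm: rrd iff detG not 0}.} For sufficiency, assume \eqref{eqn: sum gi=m and G invertible}. Then $\Sigma_i g_i=m$ means $T_\mathrm{rref}$ has no zero rows in $\rref(\Qoc^{[n]})$. Taking $T_y=T_\mathrm{rref}$, every row has a well-defined relative degree, namely $i$ for block $i$, and the coupling matrix is exactly $G$, which is nonsingular.

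For necessity, let $T_y$ give a regular relative degree $\{r_1,\dots,r_m\}$ with rows $t_k$. The key claim is that for each $i$ the rows $t_k$ with $r_k\ge i$ form a basis of $W_i$.
\begin{itemize}
\item These rows lie in $W_i$ by definition of relative degree, and they are linearly independent because $T_y$ is nonsingular.
\item For spanning, write $w\in W_i$ as $w=\Sigma_k\alpha_k t_k$. Let $r$ be the smallest $r_k$ with $\alpha_k\ne0$, and suppose $r<i$. Multiplying by $CA^{r-1}B$ kills every row with $r_k>r$, so we get $0=\Sigma_{r_k=r}\alpha_k\, t_kCA^{r-1}B$. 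This contradicts the linear independence of the rows of the nonsingular coupling matrix.
\end{itemize}
It follows that the number of $k$ with $r_k=i$ is $\dim W_i-\dim W_{i+1}=g_i$, and hence $\Sigma_i g_i=m$.

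Next, the rows with $r_k=i$ and the block-$i$ rows of $T_\mathrm{rref}$ both span complements of $W_{i+1}$ in $W_i$. Hence there exist invertible $U_i$ and a block upper-triangular $U$ with $PT_y=UT_\mathrm{rref}$. Multiplying by the appropriate Markov parameters, the higher blocks vanish, so the coupling matrix is, up to a row permutation, $(U_1\oplus\cdots\oplus U_n)G$. Therefore $\det(G)\ne0$.

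\textbf{Items~\ref{thm: solve Ty} and~\ref{thm: relative degree=?}.} The same argument gives necessity of \eqref{eqn:Ty=PUTrref}. For sufficiency, take $T_y=PUT_\mathrm{rref}$. Each row of $UT_\mathrm{rref}$ in block $i$ is a combination of $T_\mathrm{rref}$-rows from blocks $\ge i$, with invertible weight $U_i$ on block $i$. Such a row annihilates $CA^jB$ for $j<i-1$, and its image under $CA^{i-1}B$ is the corresponding row of $U_iG_i$, since the rows of blocks $>i$ vanish there. So the coupling matrix is $P(U_1\oplus\cdots\oplus U_n)G$ up to reordering, which is nonsingular. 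Item~\ref{thm: relative degree=?} then follows because exactly $g_i$ rows have relative degree $i$, and $P$ only reorders them.

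The main obstacle I expect is the basis claim for $W_i$, together with the careful bookkeeping that identifies the triangular factor $U$. In particular, one must verify that the rref structure really makes $T_\mathrm{rref}$'s block rows of index $\ge i$ a basis of $W_i$; this needs the pivot columns of $G_i$ to lie inside block $i$.
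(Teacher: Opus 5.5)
The paper does not prove this lemma. It is quoted from \cite{Cao2025rrdref[J]}, and the only related arguments in the text are Lemmas~\ref{thm: solve Ty from an okay T} and~\ref{thm: image cap=0}, which the paper derives from it using uniqueness and other properties of the reduced row-echelon form. So there is no in-paper proof to compare against. Judged on its own, your proof is correct and complete.

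The filtration $W_i$ (the left annihilator of $\Qoc^{[i-1]}$) is the right invariant object. Your spanning argument for the rows $t_k$ with $r_k\ge i$ is sound, including the case $i=n+1$, where it gives $\rank(\Qoc^{[n]})=m$. It yields $\#\{k:r_k=i\}=\dim W_i-\dim W_{i+1}=g_i$ without any reference to $T_{\mathrm{rref}}$. Once $\Sigma_i g_i=m$, the rows of $T_y$ with $r_k=i$ and the block-$i$ rows of $T_{\mathrm{rref}}$ are two complements of $W_{i+1}$ in $W_i$. This gives $PT_y=UT_{\mathrm{rref}}$ with $U$ block upper triangular, and its diagonal blocks are nonsingular simply because $U=PT_yT_{\mathrm{rref}}^{-1}$ is.

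Two small points. First, the step you flag as the main obstacle is only a dimension count. The rows of $T_{\mathrm{rref}}$ in blocks $\ge i$, together with any zero-part rows, annihilate $\Qoc^{[i-1]}$, are linearly independent, and number $m-\Sigma_{j<i}g_j=m-\rank(\Qoc^{[i-1]})=\dim W_i$. Second, in the sufficiency part of Item~\ref{thm: solve Ty}, say explicitly that each row of $U_iG_i$ is nonzero, since $U_i$ is nonsingular and $G_i$ has full row rank. That is what certifies relative degree exactly $i$ for the block-$i$ rows of $PUT_{\mathrm{rref}}$. Only then can you identify the coupling matrix with $P(U_1\oplus\cdots\oplus U_n)G$.

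What your route buys over rref-based reasoning is coordinate independence. The map $w\mapsto wCA^{i-1}B$ sends $W_i$ onto $\im(G_i^\T)$ and kills $W_{i+1}$. Hence any nonsingular $\tilde T$ whose block-$i$ rows lie in $W_i$, and whose stacked images $\tilde G_i$ form a nonsingular matrix, can play the role of $T_{\mathrm{rref}}$ in your argument verbatim. That is Lemma~\ref{thm: solve Ty from an okay T}, obtained without invoking uniqueness of the rref. The same surjection, combined with $\det(G)\ne0$, also gives Lemma~\ref{thm: image cap=0} directly.
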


Based on the results in \cite{Cao2025rrdref[J]},
we develop several lemmas to facilitate our proof.
From Item~\ref{thm: solve Ty} of \Cref{thm:rrdrefresult},
the following result can be established.

\begin{lemma}\label{thm: solve Ty from an okay T}
Consider the system \eqref{eqn: xdot=Ax+Bu, y=Cx}.
If there exists a matrix $\tilde T \in \R_m^{m \times m}$ such that
\begin{equation*}
\tilde T \Qoc^{[n]} = \begin{bmatrix}
\tilde G_1 & * & \cdots & * \\
& \tilde G_2 & \cdots & * \\
& & \ddots & \vdots \\
& & & \tilde G_{n}
\end{bmatrix},
\end{equation*}
where each $\tilde G_i$ has $m$ columns and
$\begin{bmatrix} \tilde G_1^\T & \tilde G_2^\T & \cdots & \tilde G_n^\T \end{bmatrix}^\T$
is nonsingular,
then an output transformation matrix $T_y \in \R^{m \times m}_m$ exists
ensuring that the system $(A,B,T_yC)$ has a regular relative degree.
Moreover, the general form of $T_y$ is $T_y = PU\tilde T$,
where $P \in \R^{m \times m}_m$ is an arbitrary permutation matrix and
$U$ takes the form defined in \eqref{eqn: U=?}.
\end{lemma}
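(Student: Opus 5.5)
The plan is to reduce the claim to Item~\ref{thm: solve Ty} of Lemma~\ref{thm:rrdrefresult} by comparing the given matrix $\tilde T$ with the row-reduction matrix $T_\mathrm{rref}$.

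First I establish existence. Write $\tilde G=\begin{bmatrix}\tilde G_1^\T&\cdots&\tilde G_n^\T\end{bmatrix}^\T$ and let $\tilde g_i$ be the number of rows of $\tilde G_i$, so that $\Sigma_{i=1}^n\tilde g_i=m$. Since $\tilde G$ is nonsingular, each $\tilde G_i$ has full row rank and $\im(\tilde G_i^\T)\cap\Sigma_{j\ne i}\im(\tilde G_j^\T)=\{0\}$.

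The block upper-triangular shape of $\tilde T\Qoc^{[n]}$ then gives, for each $k$, a count of row ranks. The first $k$ block columns, $\Qoc^{[k]}$, satisfy
\[
\rank(\Qoc^{[k]})=\Sigma_{i\le k}\tilde g_i ,
\]
because the rows in groups $i>k$ vanish on those columns, and the rows in groups $i\le k$ are independent there: their restriction is block upper triangular with full-row-rank diagonal blocks, so one can argue inductively from the last diagonal block upward.

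The same rank counts hold for $\rref(\Qoc^{[n]})$ with $g_i$ in place of $\tilde g_i$. Hence $g_i=\tilde g_i$ for every $i$. A block-by-block argument shows $\im(G_k^\T)+\im(\Qoc\text{-rows pivoting earlier})$ matches as well, which yields $\det(G)\ne0$. Condition \eqref{eqn: sum gi=m and G invertible} therefore holds, and Item~\ref{thm: rrd iff detG not 0} gives existence.

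For the general form, I show that $\tilde T=VT_\mathrm{rref}$ for some $V$ of the form \eqref{eqn: U=?}. Both $\tilde T\Qoc^{[n]}$ and $T_\mathrm{rref}\Qoc^{[n]}$ are row-equivalent, with $\tilde T T_\mathrm{rref}^{-1}=:V$ nonsingular. The rows of group $i$ of $\tilde T\Qoc^{[n]}$ vanish on the first $i-1$ block columns. Such a row lies in the row space of $\Qoc^{[n]}$, so it is a combination of rows of $T_\mathrm{rref}\Qoc^{[n]}$. Any row from a group $j<i$ must then carry a zero coefficient, since the rows of groups $j<i$, restricted to the first $i-1$ block columns, are independent by the rank count above. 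This shows $V$ is block upper triangular with diagonal blocks of size $g_i$, and these blocks are nonsingular because $V$ is.

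Consequently the set $\{PUT_\mathrm{rref}\}$ equals $\{PU V T_\mathrm{rref}\}=\{PU\tilde T\}$, since matrices of the form \eqref{eqn: U=?} are closed under multiplication and inversion. Item~\ref{thm: solve Ty} then gives the claimed form.

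The main obstacle I expect is justifying $\det(G)\ne0$ cleanly from the nonsingularity of $\tilde G$. I would handle it through $V$: once $V$ is known to be block upper triangular, the diagonal block $\tilde G_i$ equals $V_{ii}G_i+\Sigma_{j>i}V_{ij}(\text{row block } j\text{ on column }i)$. Those extra terms vanish because group $j>i$ rows are zero on block column $i$. Hence $\tilde G=\mathrm{diag}(V_{ii})G$, which gives $\det G\ne0$. So the clean order of steps is: the rank count, then the structure of $V$, then $\det G\ne0$, and finally Lemma~\ref{thm:rrdrefresult}.
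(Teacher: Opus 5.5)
Your proof is correct, and its skeleton matches the paper's. Both arguments show that $\tilde T$ and $T_\mathrm{rref}$ differ by a left factor of the form \eqref{eqn: U=?}, then absorb that factor into $U$ via Item~\ref{thm: solve Ty} of Lemma~\ref{thm:rrdrefresult}.

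The difference is in how that factor is obtained. The paper goes forward. It asserts, without giving details, that because each $\tilde G_i$ has full row rank, a block upper-triangular $\tilde T_\mathrm{rref}$ reduces $\tilde T\Qoc^{[n]}$ to reduced row-echelon form. It then uses uniqueness of the reduced row-echelon form, together with full row rank of $\Qoc^{[n]}$, to get $\tilde T_\mathrm{rref}\tilde T=T_\mathrm{rref}$. Your $V=\tilde T T_\mathrm{rref}^{-1}$ is exactly $\tilde T_\mathrm{rref}^{-1}$.

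You go backward instead. You take $V$ as given and prove it is block upper triangular directly, from the staircase zero pattern and the independence of the truncated rows of groups $j<i$. So you never need uniqueness of the reduced row-echelon form, nor the claim that block-triangular row operations suffice to reach it. You use only the staircase shape of $\rref(\Qoc^{[n]})$, not its reduced properties.

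Your route also makes explicit something the paper's proof leaves implicit: that \eqref{eqn: sum gi=m and G invertible} actually holds. Your rank count gives $g_i=\tilde g_i$, hence $\Sigma_{i=1}^n g_i=m$, no zero rows, and square diagonal blocks of $V$. Then $\tilde G=\mathrm{diag}(V_{11},\dots,V_{nn})G$ gives $\det G\neq0$. This is needed both for the existence claim, via Item~\ref{thm: rrd iff detG not 0}, and to be allowed to invoke Item~\ref{thm: solve Ty} at all. The cost is a somewhat longer argument; the gain is that it is self-contained.

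One editorial point: drop the vague sentence about a ``block-by-block argument'' in your existence paragraph. Present the steps in the order you settle on at the end: rank count, then the structure of $V$, then $\det G\neq0$, then Lemma~\ref{thm:rrdrefresult}. The $V$-based identity is what actually establishes $\det G\neq0$.
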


\begin{proof}
Since $\begin{bmatrix} \tilde G_1^\T & \tilde G_2^\T & \cdots & \tilde G_n^\T \end{bmatrix}^\T$ is nonsingular,
each $\tilde G_i$ possesses full row rank.
Thus, there exists a matrix $\tilde T_\mathrm{rref} \in \R^{m \times m}$ of the form
\begin{equation}\label{eqn: form of Ttilderref}
\tilde T_\mathrm{rref} = \begin{bmatrix}
\tilde U_1 & * & \cdots & * \\
& \tilde U_2 & \cdots & * \\
& & \ddots & \vdots \\
& & & \tilde U_{n}
\end{bmatrix}
\end{equation}
that converts $\tilde T \Qoc^{[n]}$ into its reduced row-echelon form defined in \eqref{eqn:Qoc rref}, specifically,
\begin{equation*}
\tilde T_\mathrm{rref} \tilde T \Qoc^{[n]} = \begin{bmatrix}
G_1 & * & \cdots & * \\
& G_2 & \cdots & * \\
& & \ddots & \vdots \\
& & & G_n
\end{bmatrix}.
\end{equation*}
By the uniqueness of the reduced row-echelon form,
we have $\tilde T_\mathrm{rref} \tilde T \Qoc^{[n]} = T_\mathrm{rref} \Qoc^{[n]}$.
Given that $\Qoc^{[n]}$ has full row rank,
it follows that $\tilde T_\mathrm{rref} \tilde T = T_\mathrm{rref}$.
By Item~\ref{thm: solve Ty} of \Cref{thm:rrdrefresult}, the general form of $T_y$ is
$T_y = PUT_\mathrm{rref} = PU\tilde T_\mathrm{rref}\tilde T$.
Considering the structure of $\tilde T_\mathrm{rref}$ in \eqref{eqn: form of Ttilderref},
the product $U\tilde T_\mathrm{rref}$ can be consolidated into a single matrix $U$
maintaining the form \eqref{eqn: U=?}.
This completes the proof.
\end{proof}

Utilizing Item~\ref{thm: rrd iff detG not 0} of \Cref{thm:rrdrefresult},
we establish the following lemma.

\begin{lemma}\label{thm: image cap=0}
Consider $i \in \{2, 3, \dots, n\}$ and a system \eqref{eqn: xdot=Ax+Bu, y=Cx}
satisfying \eqref{eqn: sum gi=m and G invertible}.
If the output controllability matrix satisfies
\[
\Qoc^{[i]} = \begin{bmatrix}
\hat G_1 & * & \cdots & * \\
& \hat G_2 & \cdots & * \\
& & \ddots & \vdots \\
& & & \hat G_i \\
\multicolumn{4}{c}{0}
\end{bmatrix},
\]
where each $\hat G_j$ has $m$ columns, then it follows that
\[
\im(\hat G_i^\T) \cap \left[ \im(\hat G_1^\T) + \im(\hat G_2^\T) + \dots + \im(\hat G_{i-1}^\T) \right] = \{0_m\}.
\]
\end{lemma}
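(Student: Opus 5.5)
The plan is to exploit the staircase structure of $\rref(\Qoc^{[n]})$ in \eqref{eqn:Qoc rref}, together with the assumption \eqref{eqn: sum gi=m and G invertible}, to show that every block $\hat G_k$ has its row space inside $\im(G_k^\T)$. Once that is established, the conclusion follows from the fact that the spaces $\im(G_1^\T),\dots,\im(G_n^\T)$ form a direct sum.

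\textbf{Step 1 (leading blocks of row combinations).} I would first prove an auxiliary claim. Let $t\in\R^{1\times m}$ be such that $t\Qoc^{[n]}\neq0$, and let its first nonzero $m$-column block be the $k$-th. Then that block lies in $\im(G_k^\T)\setminus\{0_m\}$.

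To see this, write $t\Qoc^{[n]}=t T_\mathrm{rref}^{-1}\rref(\Qoc^{[n]})$. Since $\Sigma g_j=m$, the zero row-block in \eqref{eqn:Qoc rref} is empty. Put $s=tT_\mathrm{rref}^{-1}$ and partition it as $s=[s_1,\dots,s_n]$ conformally with the row blocks $G_1,\dots,G_n$.

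Block $1$ of $t\Qoc^{[n]}$ equals $s_1G_1$. If $k>1$, this block vanishes, and $G_1$ has full row rank, so $s_1=0$. Block $2$ then equals $s_2G_2$, because the $*$-entry is multiplied by $s_1=0$; hence $s_2=0$. Continuing inductively gives $s_1=\dots=s_{k-1}=0$. The $k$-th block therefore equals $s_kG_k$, which lies in $\im(G_k^\T)$.

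\textbf{Step 2 (embedding the $\hat G_k$).} Next I would note that $\Qoc^{[i]}$ consists of the first $mi$ columns of $\Qoc^{[n]}$, with the same rows. Fix $k\le i$ and a nonzero $v\in\im(\hat G_k^\T)$. Write $v^\T=x\hat G_k$, and let $t$ select the combination $x$ of the rows of $\Qoc^{[n]}$ that belong to the $k$-th row block of the displayed form of $\Qoc^{[i]}$.

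By the displayed form, blocks $1,\dots,k-1$ of $t\Qoc^{[n]}$ vanish, and its $k$-th block equals $v^\T\neq0$. Step 1 then yields $v\in\im(G_k^\T)$. Hence $\im(\hat G_k^\T)\subseteq\im(G_k^\T)$ for $k=1,\dots,i$.

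\textbf{Step 3 (direct sum).} Because $G=[G_1^\T\ \cdots\ G_n^\T]^\T$ is nonsingular, its rows are linearly independent. It follows that $\im(G_i^\T)\cap[\im(G_1^\T)+\dots+\im(G_{i-1}^\T)]=\{0_m\}$. Combining this with the inclusions from Step 2 gives the claimed trivial intersection.

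The main obstacle is Step 1. The point to get right is that no cancellation can occur through the $*$-blocks, and this is exactly where full row rank of each $G_j$, together with the absence of zero rows (from $\Sigma g_j=m$), is needed.
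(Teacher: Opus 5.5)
Your proof is correct and follows the same route as the paper's: you compare the row spaces of the $\hat G_j$ with those of the blocks $G_j$ of $\rref(\Qoc^{[n]})$, then invoke the linear independence of the rows of the nonsingular $G$. The paper simply asserts $\im(\hat G_j^\T)=\im(G_j^\T)$ ``by the properties of the reduced row-echelon form,'' whereas your Steps~1--2 actually prove the inclusion $\im(\hat G_j^\T)\subseteq\im(G_j^\T)$, which is all that is needed and, unlike equality, holds for any staircase partition of $\Qoc^{[i]}$.
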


\begin{proof}
Note that $\Qoc^{[n]} = \begin{bmatrix} \Qoc^{[i]} & * \end{bmatrix}$.
Referring to \eqref{eqn:Qoc rref} and the properties of the reduced row-echelon form,
we have $\im(\hat G_j^\T) = \im(G_j^\T)$ for $j=1,2,\dots,i$.
Item~\ref{thm: rrd iff detG not 0} of \Cref{thm:rrdrefresult} implies $\det(G) \neq 0$,
which further indicates that the row spaces of $G_j$ are linearly independent.
Thus, the intersection of these subspaces is trivial.
This completes the proof.
\end{proof}

\subsubsection{Relationship between $\Qi^{[i]}(A,B,C)$ and $\Qi^{[i]}(A+LC,B,C)$}

In this subsection, we establish the relationship between the invertibility matrices
$\Qi^{[i]}(A,B,C)$ and $\Qi^{[i]}(A+LC,B,C)$.
We begin by introducing a fundamental identity.

\begin{lemma}\label{thm: CA^iB=?}
For $i=1,2,\dots,n$, the following identity holds:
\[
CA^iB = C(A+LC)^iB + \Sigma_{j=0}^{i-1}(-CA^{i-1-j}L)C(A+LC)^jB.
\]
\end{lemma}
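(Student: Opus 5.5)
We prove the identity by a telescoping argument built on the algebraic relation $A = (A+LC) - LC$. Write $\bar A = A+LC$. The plan is to show, for each $i\ge 1$,
\[
A^i = \bar A^{\,i} - \Sigma_{j=0}^{i-1} A^{i-1-j} LC\, \bar A^{\,j}.
\]
Multiplying on the left by $C$ and on the right by $B$ then gives exactly the stated formula, because $-CA^{i-1-j}LC\bar A^{\,j}B = (-CA^{i-1-j}L)\,C(A+LC)^jB$.

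The matrix identity follows from the elementary telescoping sum
\[
\bar A^{\,i} - A^i = \Sigma_{j=0}^{i-1} \left( A^{i-1-j}\bar A^{\,j+1} - A^{i-j}\bar A^{\,j} \right) = \Sigma_{j=0}^{i-1} A^{i-1-j}(\bar A - A)\bar A^{\,j}.
\]
The first equality holds because consecutive terms cancel, leaving only $\bar A^{\,i}$ (from $j=i-1$) and $-A^i$ (from $j=0$). Substituting $\bar A - A = LC$ yields $\bar A^{\,i} - A^i = \Sigma_{j=0}^{i-1} A^{i-1-j}LC\bar A^{\,j}$, which is the claimed identity after rearranging.

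As a sanity check one can also argue by induction on $i$. The case $i=1$ reads $CAB = C(A+LC)B - CLCB$, which is immediate. For the step, write $A^{i+1} = A\cdot A^i$, substitute the inductive expression for $A^i$, and replace the leading $A\bar A^{\,i}$ by $\bar A^{\,i+1} - LC\bar A^{\,i}$. This generates the new $j=i$ term $-LC\bar A^{\,i}$ and shifts the exponents of the remaining terms by one.

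No step is a genuine obstacle. The only care needed is keeping the noncommutative order straight: $A$-powers stay on the left of $LC$ and $\bar A$-powers on the right. This ordering is exactly what makes the coefficient $-CA^{i-1-j}L$ multiply $C(A+LC)^jB$, matching the lower-triangular block structure of $\Pio^{[i]}(L)$ used later.
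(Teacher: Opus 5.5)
Your proof is correct and follows the paper's route. Both arguments establish the matrix identity $(A+LC)^i - A^i = \Sigma_{j=0}^{i-1}A^{i-1-j}LC(A+LC)^j$ and then sandwich it between $C$ and $B$; the only difference is that you get the identity from a direct telescoping sum, while the paper proves it by induction on $i$ via $(A+LC)^{i+1}=A(A+LC)^i+LC(A+LC)^i$, which amounts to your telescoping written out one step at a time.
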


\begin{proof}
First, we prove by induction that
\begin{equation}\label{eqn: (A+LC)^i=?}
(A+LC)^i = A^i + \Sigma_{j=0}^{i-1}A^{i-1-j}LC(A+LC)^j,
\quad i=1,2,\dots,n.
\end{equation}
For $i=1$, \eqref{eqn: (A+LC)^i=?} obviously holds.
Assume that \eqref{eqn: (A+LC)^i=?} holds for some $i \ge 1$.
Then, for the case $i+1$, we have
\begin{align*}
(A+LC)^{i+1} &= (A+LC)(A+LC)^i\\
&= A(A+LC)^i + LC(A+LC)^i\\
&= A\left[A^i + \Sigma_{j=0}^{i-1}A^{i-1-j}LC(A+LC)^j\right] + LC(A+LC)^i\\
&= A^{i+1} + \Sigma_{j=0}^{i-1}A^{i-j}LC(A+LC)^j + LC(A+LC)^i\\
&= A^{i+1} + \Sigma_{j=0}^i A^{i-j}LC(A+LC)^j.
\end{align*}
By the principle of mathematical induction, \eqref{eqn: (A+LC)^i=?} holds for all $i=1,\dots,n$.
Substituting \eqref{eqn: (A+LC)^i=?} yields
\[
C(A+LC)^iB = CA^iB + \Sigma_{j=0}^{i-1}CA^{i-1-j}LC(A+LC)^jB,
\quad i=1,2,\dots,n.
\]
Rearranging the terms, we obtain:
$CA^iB = C(A+LC)^iB + \Sigma_{j=0}^{i-1}(-CA^{i-1-j}L)C(A+LC)^jB$.
This completes the proof.
\end{proof}

Using Lemma~\ref{thm: CA^iB=?}, we can characterize the relationship
between $\Qi^{[i]}(A,B,C)$ and $\Qi^{[i]}(A+LC,B,C)$.

\begin{lemma}
\label{thm: Qi(A,B,C)=Pio(A,C,L)*Qi(A+LC,B,C)}
For $i=2,3,\dots,n$, the following holds
\[
\Qi^{[i]}(A,B,C) = \Pio^{[i]}(L)\Qi^{[i]}(A+LC,B,C).
\]
\end{lemma}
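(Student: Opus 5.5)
The plan is to compare the two matrices block by block, using Lemma~\ref{thm: CA^iB=?} for each block. Write $M_k = CA^{k-1}B$ and $N_k = C(A+LC)^{k-1}B$ for $k=1,2,\dots,i$. Then $\Qi^{[i]}(A,B,C)$ is the block skew lower-triangular Hankel matrix built from $M_1,\dots,M_i$, and $\Qi^{[i]}(A+LC,B,C)$ is the one built from $N_1,\dots,N_i$. Block row $r$ (for $r=1,\dots,i$) has zeros in its first $i-r$ block columns, followed by $M_1,\dots,M_r$ (respectively $N_1,\dots,N_r$) in the last $r$ block columns.

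First, rewrite Lemma~\ref{thm: CA^iB=?} in this notation. With index $k=i+1$ it reads
\[
M_{k} = N_{k} - \Sigma_{j=1}^{k-1} CA^{k-1-j}L\, N_{j}, \qquad k\ge 2,
\]
and trivially $M_1=N_1$. Now compute block row $r$ of $\Pio^{[i]}(L)\Qi^{[i]}(A+LC,B,C)$. Row $r$ of $\Pio^{[i]}$ is
\[
\begin{bmatrix}-CA^{r-2}L&\cdots&-CL&I_m&0&\cdots\end{bmatrix},
\]
that is, its entry in block column $s<r$ is $-CA^{r-1-s}L$. Hence block row $r$ of the product equals
\[
(\text{row } r \text{ of } \Qi^{[i]}(A+LC,B,C)) - \Sigma_{s=1}^{r-1} CA^{r-1-s}L\,(\text{row } s \text{ of } \Qi^{[i]}(A+LC,B,C)).
\]

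Next, evaluate block column $c=i-r+t$, which is where $N_t$ sits in row $r$, for $t\ge1$. Row $s$ has $N_{t-(r-s)}$ in that column, and this block is zero when $t-(r-s)\le 0$. So the entry equals
\[
N_t - \Sigma_{s=r-t+1}^{r-1} CA^{r-1-s}L\,N_{t-r+s}.
\]
Reindex with $j=t-r+s$, which runs from $1$ to $t-1$, so that $r-1-s = t-1-j$. The entry becomes $N_t - \Sigma_{j=1}^{t-1} CA^{t-1-j}L N_j = M_t$. In the leading zero columns ($t\le0$), every row $s\le r$ is also zero there. Therefore the product coincides with $\Qi^{[i]}(A,B,C)$.

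The only delicate step is this index bookkeeping: checking that the truncation of the sum caused by the skew-triangular zeros matches exactly the range $j=1,\dots,t-1$ in Lemma~\ref{thm: CA^iB=?}. Everything else is mechanical.
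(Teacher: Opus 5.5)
Your proposal is correct and takes essentially the same route as the paper: both compare $\Pio^{[i]}(L)\Qi^{[i]}(A+LC,B,C)$ with $\Qi^{[i]}(A,B,C)$ block by block and reduce each nonzero block to the identity of Lemma~\ref{thm: CA^iB=?}. The only difference is that you write each block row of the product as a combination of earlier rows, which covers in one computation the cases $j=1$, $j+k\le i+1$ and $j+k\ge i+2$ that the paper treats separately.
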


\begin{proof}
Let $\mathrm{LHS}_{jk} \in \R^{m\times m}$ denote the block entries of the matrix $\Qi^{[i]}(A,B,C)$.
Specifically, for $j,k=1,2,\dots,i$, we have
\begin{equation}\label{eqn: LHS=what}
\mathrm{LHS}_{jk}=
\begin{cases}
0_{m\times m}, & \text{if } j+k \le i,\\
CA^{j+k-i-1}B, & \text{if } j+k \ge i+1.
\end{cases}
\end{equation}
Similarly, let $\mathrm{RHS}_{jk} \in \R^{m\times m}$ denote the block entries of the product
$\Pio^{[i]}(L)\Qi^{[i]}(A+LC,B,C)$.
We now compute $\mathrm{RHS}_{jk}$.
For $j=1$, it is easily seen that
\begin{equation}\label{eqn: RHSjk=? 1}
\mathrm{RHS}_{1k} = \begin{cases}
0_{m\times m}, & \text{if } k \le i-1,\\
CB, & \text{if } k=i.
\end{cases}
\end{equation}
For $j \ge 2$, the entry is given by
\begin{equation}\label{eqn: RHSjk=what*what}
\mathrm{RHS}_{jk} =
\begin{bmatrix}
-CA^{j-2}L & \cdots & -CL & I_m & 0_{m\times m(i-j)}
\end{bmatrix}
\begin{bmatrix}
0_{m(i-k)\times m} \\ CB \\ \vdots \\ C(A+LC)^{k-1}B
\end{bmatrix}.
\end{equation}
Consequently, for $j \ge 2$ and $j+k \le i+1$, we have
\begin{equation}\label{eqn: RHSjk=? 2}
\mathrm{RHS}_{jk} =
\begin{cases}
0_{m\times m}, & \text{if } j \ge 2, \ j+k \le i,\\
CB, & \text{if } j \ge 2, \ j+k = i+1.
\end{cases}
\end{equation}
From \eqref{eqn: RHSjk=what*what}, it follows that for $j \ge 2$ and $j+k \ge i+2$:
\begin{align*}
\mathrm{RHS}_{jk}
&= \begin{bmatrix}
-CA^{j+k-i-2}L & \cdots & -CL & I_m
\end{bmatrix}
\begin{bmatrix}
CB \\ C(A+LC)B \\ \vdots \\ C(A+LC)^{j+k-i-1}B
\end{bmatrix}\\
&= \Sigma_{l=0}^{j+k-i-2}(-CA^{j+k-i-2-l}L)C(A+LC)^lB
+ C(A+LC)^{j+k-i-1}B.
\end{align*}
Applying Lemma~\ref{thm: CA^iB=?}, we obtain
\begin{equation}\label{eqn: RHSjk=? 3}
\mathrm{RHS}_{jk} = CA^{j+k-i-1}B, \quad j \ge 2, \ j+k \ge i+2.
\end{equation}
Combining \eqref{eqn: RHSjk=? 1}, \eqref{eqn: RHSjk=? 2},
and \eqref{eqn: RHSjk=? 3} yields, for all $j,k=1,2,\dots,i$:
\begin{equation}\label{eqn: RHS=what}
\mathrm{RHS}_{jk}
= \begin{cases}
0_{m\times m}, & \text{if } j+k \le i,\\
CA^{j+k-i-1}B, & \text{if } j+k \ge i+1.
\end{cases}
\end{equation}
Comparing \eqref{eqn: LHS=what} and \eqref{eqn: RHS=what},
we conclude $\mathrm{LHS}_{jk} = \mathrm{RHS}_{jk}$.
This completes the proof.
\end{proof}

The significance of
Lemma~\ref{thm: Qi(A,B,C)=Pio(A,C,L)*Qi(A+LC,B,C)} lies in
the fact that the entries of $\Pio^{[i]}(L)$ are linear with
respect to the Luenberger gain $L$.

\begin{remark}
In Section 4.6.2 of \cite{Zhoubook}, a similar relationship
is presented between $\Qi^{[i]}(A+LC,B,C)$ and $\Qi^{[i]}(A,B,C)$:
\[
\Qi^{[i]}(A+LC,B,C) = \mathit\Pi_\mathrm o^{[i]}(L)\Qi^{[i]}(A,B,C),
\]
where
\[
\mathit\Pi_\mathrm o^{[i]}(L) = \begin{bmatrix}
I_m & & & & \\
CL & I_m & & & \\
C(A+LC)L & CL & I_m & & \\
\vdots & \ddots & \ddots & \ddots & \\
C(A+LC)^{i-2}L & \cdots & C(A+LC)L & CL & I_m
\end{bmatrix}.
\]
However, the entries of $\mathit\Pi_\mathrm o^{[i]}(L)$
are not linear in $L$.
The relationship between $\Pio^{[i]}(L)$
and $\mathit\Pi_\mathrm o^{[i]}(L)$
is given by
$\Pio^{[i]}(L)\mathit\Pi_\mathrm o^{[i]}(L) = I_{mi}$.
\end{remark}

\subsubsection{Notational conventions}

In this subsection, we establish the notational conventions that
will be employed throughout the subsequent proofs.
Given the sequence $\psi_i$, $i=1,2,\dots,\nsharp$,
a matrix $X \in \R^{m \times m}$ is partitioned as
\[
X=\begin{bmatrix}
X_{11}&X_{12}&\cdots&X_{1\nsharp}\\
X_{21}&X_{22}&\cdots&X_{2\nsharp}\\
\vdots&\vdots&\ddots&\vdots\\
X_{\nsharp1}&X_{\nsharp2}&\cdots&X_{\nsharp\nsharp}
\end{bmatrix},
\]
where $X_{ij} \in \R^{\psi_i \times \psi_j}$.
It should be noted that some submatrices $X_{ij}$ may be empty.
We define the notation $X|_{p_1\sim p_2}^{q_1\sim q_2}$,
where $p_1, p_2, q_1, q_2 \in \mathbf Z$, as follows
\begin{itemize}
\item If $1 \le p_1 \le p_2 \le \nsharp$ and $1 \le q_1 \le q_2 \le \nsharp$,
then we denote
\[
X|_{p_1\sim p_2}^{q_1\sim q_2}
=\begin{bmatrix}
X_{p_1q_1}&X_{p_1,q_1+1}&\cdots&X_{p_1q_2}\\
X_{p_1+1,q_1}&X_{p_1+1,q_1+1}&\cdots&X_{p_1+1,q_2}\\
\vdots&\vdots&\ddots&\vdots\\
X_{p_2q_1}&X_{p_2,q_1+1}&\cdots&X_{p_2q_2}
\end{bmatrix};
\]
\item If the conditions $1 \le p_1 \le p_2 \le \nsharp$ and $1 \le q_1 \le q_2 \le \nsharp$ are not satisfied,
then $X|_{p_1\sim p_2}^{q_1\sim q_2}$ is regarded as an empty matrix.
\end{itemize}
Furthermore, we define $X|_p^{q_1\sim q_2} = X|_{p\sim p}^{q_1\sim q_2}$,
$X|_{p_1\sim p_2}^q = X|_{p_1\sim p_2}^{q\sim q}$,
and $X|_p^q = X|_{p\sim p}^{q\sim q}$.
In particular, the following specific notations are used
\begin{itemize}
\item $0|_{p_1\sim p_2}^{q_1\sim q_2}$:
the zero matrix of the same dimensions as
$X|_{p_1\sim p_2}^{q_1\sim q_2}$;
\item $0|_{p_1\sim p_2}^q$:
shorthand for $0|_{p_1\sim p_2}^{q\sim q}$;
\item $0|_{p_1\sim p_2}$:
shorthand for $0|_{p_1\sim p_2}^{1\sim\nsharp}$;
\item $I|_p$:
the identity matrix of the same dimensions as $X|_p^p$;
\item $*|_{p_1\sim p_2}^{q_1\sim q_2}$:
a matrix whose specific entries are of no concern,
possessing the same dimensions as $X|_{p_1\sim p_2}^{q_1\sim q_2}$;
\item $*|_{p_1\sim p_2}$:
shorthand for $*|_{p_1\sim p_2}^{1\sim\nsharp}$.
\end{itemize}
Note that for any matrix $X \in \R^{m \times m}$,
it holds that $\PHi_{p_1\sim p_2} X \PSi_{q} = X|_{p_1\sim p_2}^{q}$.
Consequently, the main result \eqref{eqn: main result} can be rewritten as
\begin{equation}\label{eqn: rewrite main result}
S_i|_{i+j\sim \nsharp}^j=0,\quad
j=1,2,\dots,\nsharp-i,\
i=2,3,\dots,\nsharp-1.
\end{equation}
The notation $X|_{p_1\sim p_2}^{q}$ will be utilized frequently in the following proofs,
and the form \eqref{eqn: rewrite main result} will be used for the proof of Theorem~\ref{thm: theorem}.

\subsubsection{Definition of $H_i$}

We now introduce the definition of $H_i$ for $i=1,2,\dots,\nsharp$, which
will facilitate the subsequent proofs.
Define
\begin{equation}\label{eqn: define H_i}
H_i = F_i - \Sigma_{j=2}^i S_j|_i^{i+1-j} H_{i+1-j}
\in \R^{\psi_i\times m},
\quad i=1,2,\dots,\nsharp,
\end{equation}
which is equivalent to
\begin{equation}\label{eqn: define H_i version 2}
H_i = F_i - \Sigma_{j=1}^{i-1} S_{i+1-j}|_i^j H_j,
\quad i=1,2,\dots,\nsharp.
\end{equation}
It should be noted that for $i=1$, the summation terms
in \eqref{eqn: define H_i} and \eqref{eqn: define H_i version 2} do not exist.
Thus, $H_1$ is defined as $H_1 = F_1$,
while the remaining $H_i$ are defined recursively.

Next, we establish that the matrix
$\begin{bmatrix}H_1^\T & H_2^\T & \cdots & H_{\nsharp}^\T\end{bmatrix}^\T$
is nonsingular.
Given that $H_1 = F_1$, we proceed by induction. Assume that
for some $i \in \{1, 2, \dots, \nsharp-1\}$, there holds
\begin{equation}\label{eqn: H_i inductive hypothesis}
\begin{bmatrix}H_1\\ H_2\\ \vdots\\ H_i\end{bmatrix}
=\begin{bmatrix}
I|_1&&&\\
*&I|_2&\\
\vdots&\vdots&\ddots&\\
*&*&\cdots&I|_i\end{bmatrix}
\begin{bmatrix}F_1\\ F_2\\ \vdots\\ F_i\end{bmatrix}.
\end{equation}
Considering the recursive definition \eqref{eqn: define H_i version 2} for $i+1$,
we have
\begin{align}
H_{i+1}
\notag&
=F_{i+1} - \Sigma_{j=1}^i S_{i+2-j}|_{i+1}^j H_j
=F_{i+1} + \begin{bmatrix}* & * & \cdots & *\end{bmatrix}
\begin{bmatrix}H_1\\ H_2\\ \vdots\\ H_i\end{bmatrix}
\\
\notag&
=F_{i+1} + \begin{bmatrix}* & * & \cdots & *\end{bmatrix}
\begin{bmatrix}
*&&&\\
*&*&\\
\vdots&\vdots&\ddots&\\
*&*&\cdots&*\end{bmatrix}
\begin{bmatrix}F_1\\ F_2\\ \vdots\\ F_i\end{bmatrix}
\\
&\label{eqn: Hi+1=?}
=F_{i+1} + \begin{bmatrix}* & * & \cdots & *\end{bmatrix}
\begin{bmatrix}F_1\\ F_2\\ \vdots\\ F_i\end{bmatrix}
=\begin{bmatrix}* & * & \cdots & * & I|_{i+1}\end{bmatrix}
\begin{bmatrix}F_1\\ F_2\\ \vdots\\ F_i\\ F_{i+1}\end{bmatrix}.
\end{align}
Combining \eqref{eqn: Hi+1=?} with the inductive hypothesis
\eqref{eqn: H_i inductive hypothesis} yields
\[
\begin{bmatrix}H_1\\ H_2\\ \vdots\\ H_i\\ H_{i+1}\end{bmatrix}
=\begin{bmatrix}
I|_1&&&&\\
*&I|_2&&\\
\vdots&\vdots&\ddots&&\\
*&*&\cdots&I|_i&\\
*&*&\cdots&*&I|_{i+1}\end{bmatrix}
\begin{bmatrix}F_1\\ F_2\\ \vdots\\ F_i\\ F_{i+1}\end{bmatrix}.
\]
By the principle of mathematical induction, we obtain
\[
\begin{bmatrix}H_1\\ H_2\\ \vdots\\ H_{\nsharp}\end{bmatrix}
=\begin{bmatrix}
I|_1&&&\\
*&I|_2&\\
\vdots&\vdots&\ddots&\\
*&*&\cdots&I|_{\nsharp}
\end{bmatrix}
\begin{bmatrix}F_1\\ F_2\\ \vdots\\ F_{\nsharp}\end{bmatrix}.
\]
Consequently, the nonsingularity of
$\begin{bmatrix}F_1^\T & F_2^\T & \cdots & F_{\nsharp}^\T\end{bmatrix}^\T$
guarantees the nonsingularity of
$\begin{bmatrix}H_1^\T & H_2^\T & \cdots & H_{\nsharp}^\T\end{bmatrix}^\T$
as well.

\subsubsection{Expansion of $S \Qi^{[\nsharp]}(A+LC, B, R_1C)$}

For brevity, we denote $\Ao = A + LC$.
By applying Lemma~\ref{thm: Qi(A,B,C)=Pio(A,C,L)*Qi(A+LC,B,C)}
and the definition of $S$ in \eqref{eqn: define S}, we obtain
\begin{equation}\label{eqn: R*Qisharp=S*Qi(Ao,B,R1C)}
\begin{aligned}
R\Qi^{[\nsharp]}(A,B,C)
&
= R\Pio^{[\nsharp]} \Qi^{[\nsharp]}(\Ao,B,C)
\\
&
= \left[ R\Pio^{[\nsharp]} (I_{\nsharp} \otimes R_1^{-1}) \right]
\left[ (I_{\nsharp} \otimes R_1) \Qi^{[\nsharp]}(\Ao,B,C) \right]
\\
&
= S \Qi^{[\nsharp]}(\Ao, B, R_1C).
\end{aligned}
\end{equation}
Combining \eqref{eqn: R*Qisharp=S*Qi(Ao,B,R1C)} with \eqref{eqn: RQi=G1~n} yields
\[
S \Qi^{[\nsharp]}(\Ao, B, R_1C)
= \begin{bmatrix}
F_1 & * & \cdots & * \\
& F_2 & \cdots & * \\
& & \ddots & \vdots \\
& & & F_{\nsharp}
\end{bmatrix},
\]
which can be expanded as
\begin{align*}
&\begin{bmatrix} S_{\nsharp} & \cdots & S_2 & I_m \end{bmatrix}
\begin{bmatrix}
&& R_1CB \\
& \adots & \vdots \\
R_1CB & \cdots & R_1C\Ao^{\nsharp-1}B
\end{bmatrix}
= \begin{bmatrix}
F_1 & \cdots & * \\
& \ddots & \vdots \\
&& F_{\nsharp}
\end{bmatrix}.
\end{align*}
Consequently, we have
\[
\begin{bmatrix} S_{\nsharp} & \cdots & S_2 & I_m \end{bmatrix}
\begin{bmatrix}
0_{m(\nsharp-i) \times m} \\ R_1CB \\ R_1C\Ao B \\ \vdots \\ R_1C\Ao^{i-1}B
\end{bmatrix}
= \begin{bmatrix}
*|_{1\sim i-1} \\ F_i \\ 0|_{i+1\sim\nsharp}
\end{bmatrix},
\quad
i=1,2,\dots,\nsharp.
\]
This leads to the following relation
\[
R_1 C \Ao^{i-1}B +
\Sigma_{j=2}^i S_j R_1 C \Ao^{i-j}B
= \begin{bmatrix}
*|_{1\sim i-1} \\ F_i \\ 0|_{i+1\sim\nsharp}
\end{bmatrix},
\quad i=1,2,\dots,\nsharp.
\]
By extracting the block rows from index $i$ to $\nsharp$, we arrive at
\begin{equation}\label{eqn: important}
R_1|_{i\sim\nsharp} C \Ao^{i-1}B + \Sigma_{j=2}^i S_j|_{i\sim\nsharp} R_1 C \Ao^{i-j}B
= \begin{bmatrix} F_i \\ 0|_{i+1\sim\nsharp} \end{bmatrix},
\quad i=1,2,\dots,\nsharp.
\end{equation}
The expression \eqref{eqn: important} serves as the fundamental starting point
for the derivations in the subsequent two subsections.
Note that for $i=1$, the summation term in \eqref{eqn: important} is null.

\subsubsection{Preparation for the induction}

In the subsequent two subsections,
we employ mathematical induction to prove
both the sufficiency and necessity parts of Theorem~\ref{thm: theorem}.
To this end,
we establish two preliminary results that are essential to both parts.
First, we specify the initial condition.
Recalling the definition $H_1=F_1$,
and setting $i=1$ in \eqref{eqn: important}, we obtain
\begin{equation}\label{eqn: case i=1}
R_1|_{1\sim\nsharp}CB=\begin{bmatrix}H_1\\ 0|_{2\sim\nsharp}\end{bmatrix}.
\end{equation}
Second, we derive some equations for the case $i+1$,
which will be referenced frequently in the forthcoming proofs.
In \eqref{eqn: important}, the case $i+1$ corresponds to
\[
R_1|_{i+1\sim\nsharp}C\Ao^iB+\Sigma_{j=2}^{i+1}S_j|_{i+1\sim\nsharp}R_1C\Ao^{i+1-j}B
=\begin{bmatrix}F_{i+1}\\ 0|_{i+2\sim\nsharp}\end{bmatrix}.
\]
For $j=2,3,\dots,i+1$, we partition $S_j|_{i+1\sim\nsharp}$ and $R_1C\Ao^{i+1-j}B$ as follows
\begin{gather}\label{eqn: S=S big}
S_j|_{i+1\sim\nsharp}
=\begin{bmatrix}
\multirow{2}{*}{$S_j|_{i+1\sim\nsharp}^{1\sim i+1-j}$}
&S_j|^{i+2-j}_{i+1}
&\multirow{2}{*}{$S_j|_{i+1\sim\nsharp}^{i+3-j\sim\nsharp}$}\\
&S_j|_{i+2\sim\nsharp}^{i+2-j}&
\end{bmatrix},
\\ \label{eqn:partition R1CAowhatB}
R_1C\Ao^{i+1-j}B=
\begin{bmatrix}
R_1|_{1\sim j-1}C\Ao^{i+1-j}B\\
R_1|_{j\sim\nsharp}C\Ao^{i+1-j}B
\end{bmatrix}.
\end{gather}
It then follows that
\begin{gather}
\notag
R_1|_{i+1\sim\nsharp}C\Ao^iB
+\Sigma_{j=2}^{i+1}
\begin{bmatrix}
\multirow{2}{*}{$S_j|_{i+1\sim\nsharp}^{1\sim i+1-j}$}
&S_j|^{i+2-j}_{i+1}
&\multirow{2}{*}{$S_j|_{i+1\sim\nsharp}^{i+3-j\sim\nsharp}$}\\
&S_j|_{i+2\sim\nsharp}^{i+2-j}&
\end{bmatrix}
\\ \label{eqn: TCAB+[S big]TCAB=F 0}
{}\times
\begin{bmatrix}
R_1|_{1\sim j-1}C\Ao^{i+1-j}B\\
R_1|_{j\sim\nsharp}C\Ao^{i+1-j}B
\end{bmatrix}
=\begin{bmatrix}F_{i+1}\\ 0|_{i+2\sim\nsharp}\end{bmatrix}.
\end{gather}
Note that in
\eqref{eqn: S=S big} and \eqref{eqn: TCAB+[S big]TCAB=F 0},
the submatrix $S_j|_{i+1\sim\nsharp}^{1\sim i+1-j}$ is empty when $j=i+1$.

\subsection{Proof for Theorem~\ref{thm: theorem} (Sufficiency),
Theorem~\ref{thm: theorem solve Ty}, and
Proposition~\ref{thm: relative degree of (A+LC,B,TyC)}}

To prove the sufficiency part of Theorem~\ref{thm: theorem},
we assume that \eqref{eqn: rewrite main result} holds.
Based on the initial condition \eqref{eqn: case i=1} for $i=1$,
we establish the inductive hypothesis that,
for some $i \in \{1, 2, \dots, \nsharp-1\}$, there holds
\begin{equation}\label{eqn: sufficiency inductive hypothesis}
R_1|_{j\sim\nsharp} C \Ao^{j-1} B
= \begin{bmatrix} H_j \\ 0|_{j+1\sim\nsharp} \end{bmatrix},
\quad j = 1, 2, \dots, i.
\end{equation}

First, starting from the prerequisite \eqref{eqn: rewrite main result},
we derive the conditions \eqref{eqn:S=0tocondition1} and \eqref{eqn:S=0tocondition2} below,
which imply that the terms $S_j|_{i+1\sim\nsharp}^{1\sim i+1-j}$
and $S_j|_{i+2\sim\nsharp}^{i+2-j}$ in \eqref{eqn: S=S big} vanish.
The condition \eqref{eqn: rewrite main result} can be expressed as
\begin{equation}\label{eqn: sufficiency use prerequist}
S_j|_{j+k\sim\nsharp}^k = 0, \quad k = 1, 2, \dots, \nsharp-j, \ j = 2, 3, \dots, \nsharp-1.
\end{equation}
On one hand, if $i \ge 2$, then for $j+k \le i+1$,
the matrix $S_j|_{j+k\sim\nsharp}^k$ in \eqref{eqn: sufficiency use prerequist}
can be partitioned as
\begin{equation}\label{eqn: sufficiency partition S}
S_j|_{j+k\sim\nsharp}^k =
\begin{bmatrix}
S_j|_{j+k\sim i}^k \\
S_j|_{i+1\sim\nsharp}^k
\end{bmatrix},
\quad
k = 1, 2, \dots, i+1-j, \
j = 2, 3, \dots, i.
\end{equation}
Note that in \eqref{eqn: sufficiency partition S},
the submatrix $S_j|_{j+k\sim i}^k$ is empty when $j+k = i+1$.
Consequently, if $i \ge 2$, we have
\[
S_j|_{i+1\sim\nsharp}^k = 0,
\quad
k = 1, 2, \dots, i+1-j, \
j = 2, 3, \dots, i.
\]
Arranging $S_j|_{i+1\sim\nsharp}^k$ in a row along the index $k$ yields
\begin{equation}\label{eqn:S=0tocondition1}
S_j|_{i+1\sim\nsharp}^{1\sim i+1-j}
= \begin{bmatrix}
S_j|_{i+1\sim\nsharp}^1 & S_j|_{i+1\sim\nsharp}^2 & \cdots & S_j|_{i+1\sim\nsharp}^{i+1-j}
\end{bmatrix}
= 0,
\quad j = 2, 3, \dots, i.
\end{equation}
On the other hand, by setting $k = i+2-j$ in \eqref{eqn: sufficiency use prerequist},
since $1 \le k \le \nsharp-j$ implies $1 \le i+2-j \le \nsharp-j$
(and thus $i \le \nsharp-2$ and $j \le i+1$), we obtain
\begin{equation}\label{eqn:S=0tocondition2}
S_j|_{i+2\sim\nsharp}^{i+2-j} = 0,
\quad j = 2, 3, \dots, i+1.
\end{equation}

Next, we employ \eqref{eqn:S=0tocondition1} and \eqref{eqn:S=0tocondition2}
to advance the derivation.
By substituting these conditions into \eqref{eqn: S=S big},
the matrix $S_j|_{i+1\sim\nsharp}$ becomes
\begin{equation}\label{eqn:Sj...1}
S_j|_{i+1\sim\nsharp}
= \begin{bmatrix}
\multirow{2}{*}{$0|_{i+1\sim\nsharp}^{1\sim i+1-j}$}
& S_j|^{i+2-j}_{i+1}
& \multirow{2}{*}{$S_j|_{i+1\sim\nsharp}^{i+3-j\sim\nsharp}$} \\
& 0|_{i+2\sim\nsharp}^{i+2-j} &
\end{bmatrix},
\quad j = 2, 3, \dots, i+1.
\end{equation}
The inductive hypothesis \eqref{eqn: sufficiency inductive hypothesis} can be rewritten as
\begin{equation}\label{eqn:R1...1}
R_1|_{j\sim\nsharp} C \Ao^{i+1-j} B =
\begin{bmatrix}
H_{i+2-j} \\ 0|_{i+3-j\sim\nsharp}
\end{bmatrix},
\quad j = 2, 3, \dots, i+1.
\end{equation}
By incorporating \eqref{eqn:Sj...1} and \eqref{eqn:R1...1} into \eqref{eqn: TCAB+[S big]TCAB=F 0}
and treating terms $R_1|_{1\sim j-1} C \Ao^{i+1-j} B$ and $S_j|_{i+1\sim\nsharp}^{i+3-j\sim\nsharp}$ as ``$*$'',
the equation \eqref{eqn: TCAB+[S big]TCAB=F 0} for the case $i+1$ becomes
\begin{gather*}
R_1|_{i+1\sim\nsharp} C \Ao^i B + \Sigma_{j=2}^{i+1}
\begin{bmatrix}
\multirow{2}{*}{$0|_{i+1\sim\nsharp}^{1\sim i+1-j}$}
& S_j|^{i+2-j}_{i+1}
& \multirow{2}{*}{$*|_{i+1\sim\nsharp}^{i+3-j\sim\nsharp}$} \\
& 0|_{i+2\sim\nsharp}^{i+2-j} &
\end{bmatrix} \\
{}\times
\begin{bmatrix}
*|_{1\sim i+1-j} \\ H_{i+2-j} \\ 0|_{i+3-j\sim\nsharp}
\end{bmatrix}
= \begin{bmatrix} F_{i+1} \\ 0|_{i+2\sim\nsharp} \end{bmatrix}.
\end{gather*}
This simplifies to
\[
R_1|_{i+1\sim\nsharp} C \Ao^i B + \Sigma_{j=2}^{i+1}
\begin{bmatrix}
S_j|^{i+2-j}_{i+1} H_{i+2-j} \\ 0|_{i+2\sim\nsharp}
\end{bmatrix}
= \begin{bmatrix} F_{i+1} \\ 0|_{i+2\sim\nsharp} \end{bmatrix},
\]
leading to
\begin{equation}\label{eqn: Ti+1~nCAB=?}
R_1|_{i+1\sim\nsharp} C \Ao^i B
= \begin{bmatrix}
F_{i+1} - \Sigma_{j=2}^{i+1} S_j|^{i+2-j}_{i+1} H_{i+2-j} \\ 0|_{i+2\sim\nsharp}
\end{bmatrix}.
\end{equation}
By recalling the definition of $H_i$ in \eqref{eqn: define H_i} and considering the case $i+1$, it yields
\begin{equation}\label{eqn: Hi+1 definition}
H_{i+1} = F_{i+1} - \Sigma_{j=2}^{i+1} S_j|^{i+2-j}_{i+1} H_{i+2-j}.
\end{equation}
Substituting \eqref{eqn: Hi+1 definition} into \eqref{eqn: Ti+1~nCAB=?} yields
\begin{equation}\label{eqn:R1CAiB=Hi+1}
R_1|_{i+1\sim\nsharp} C \Ao^i B
= \begin{bmatrix}
H_{i+1} \\ 0|_{i+2\sim\nsharp}
\end{bmatrix}.
\end{equation}
Thus, the inductive hypothesis \eqref{eqn: sufficiency inductive hypothesis}
holds for $j = i+1$.
By the principle of mathematical induction, \eqref{eqn: sufficiency inductive hypothesis}
holds for all $j = 1, 2, \dots, \nsharp$, i.e.,
\begin{equation}\label{eqn:R1=Hi0}
R_1|_{i\sim\nsharp} C \Ao^{i-1} B
= \begin{bmatrix} H_i \\ 0|_{i+1\sim\nsharp} \end{bmatrix},
\quad i = 1, 2, \dots, \nsharp,
\end{equation}
where the matrix $0|_{i+1\sim\nsharp}$ is empty when $i = \nsharp$.

The result \eqref{eqn:R1=Hi0} implies
\begin{align*}
& \Qoc^{[\nsharp]}(A+LC, B, R_1C)
= \begin{bmatrix} R_1CB & R_1C\Ao B & \cdots & R_1C\Ao^{\nsharp-1}B \end{bmatrix} \\
&\quad =
\def\entryIi{\Block[draw]{4-1}{R_1|_{1\sim\nsharp}CB}}
\def\entryIIi{\Block[draw]{3-1}{R_1|_{2\sim\nsharp}C\Ao B}}
\def\entryIViv{\Block[draw]{}{R_1|_{\nsharp}C\Ao^{\nsharp-1}B}}
\setlength{\extrarowheight}{1mm}
\left[
\begin{NiceArray}{cccc}[margin,rules/color=black,no-cell-nodes]
\entryIi & * & \cdots & * \\
         & \entryIIi & \cdots & \vdots     \\
         &           & \ddots & * \\
         &           & \cdots & \entryIViv \\
\end{NiceArray}
\right]
\\
&\quad
= \begin{bmatrix}
H_1 & * & \cdots & * \\
& H_2 & \cdots & * \\
& & \ddots & \vdots \\
& & & H_{\nsharp}
\end{bmatrix}.
\end{align*}
Given the nonsingularity of
$\begin{bmatrix} H_1^\T & H_2^\T & \cdots & H_{\nsharp}^\T \end{bmatrix}^\T$,
the decoupling matrix
$\Gamma(A+LC, B, R_1C) = \begin{bmatrix} H_1^\T & H_2^\T & \cdots & H_{\nsharp}^\T \end{bmatrix}^\T$
is nonsingular.
This establishes that the system $(A+LC, B, R_1C)$ has a regular relative degree,
completing the proof for the sufficiency of Theorem~\ref{thm: theorem}.

Furthermore, we have
\[
R_1 \Qoc^{[\nsharp]}(A+LC, B, C)
= \Qoc^{[\nsharp]}(A+LC, B, R_1C)
= \begin{bmatrix}
H_1 & * & \cdots & * \\
& H_2 & \cdots & * \\
& & \ddots & \vdots \\
& & & H_{\nsharp}
\end{bmatrix}.
\]
According to Lemma~\ref{thm: solve Ty from an okay T},
the general form of $T_y$ is indeed \eqref{eqn: Ty=PUR1}.
This completes the proof of Theorem~\ref{thm: theorem solve Ty}.

Finally, the relative degree of the system $(A+LC, B, R_1C)$ is
given by \eqref{eqn: relative degree psi1...psinsharp}.
From Item~\ref{thm: relative degree=?} of Lemma~\ref{thm:rrdrefresult},
it follows that for any $T_y$ ensuring a regular relative degree for
the system $(A+LC, B, T_yC)$, the relative degree is given by
\eqref{eqn: relative degree psi1...psinsharp}.
This completes the proof of Proposition~\ref{thm: relative degree of (A+LC,B,TyC)}.

\subsection{Proof for the Necessity of Theorem~\ref{thm: theorem}}

To prove the necessity of Theorem~\ref{thm: theorem},
we assume that there exists a matrix $T_y \in \R^{m \times m}_m$
such that the system $(A+LC, B, T_yC)$ possesses a regular relative degree.
According to Item~\ref{thm: rrd iff detG not 0} of Lemma~\ref{thm:rrdrefresult},
the system $(A+LC, B, C)$ must satisfy \eqref{eqn: sum gi=m and G invertible}.
Based on the initial condition \eqref{eqn: case i=1} for $i=1$,
we establish the inductive hypothesis that,
for some $i \in \{1, 2, \dots, \nsharp-2\}$, there holds
\begin{equation}\label{eqn: necessity inductive hypothesis}
\begin{cases}
R_1|_{j\sim\nsharp} C \Ao^{j-1} B
= \begin{bmatrix} H_j \\ 0|_{j+1\sim\nsharp} \end{bmatrix},
\quad j = 1, 2, \dots, i,
\\
S_j|_{j+k\sim\nsharp}^k = 0,
\quad k = 1, 2, \dots, i+1-j, \
j = 2, 3, \dots, i.
\end{cases}
\end{equation}
Note that for $i=1$, the matrix $S_j|_{j+k\sim\nsharp}^k$ in \eqref{eqn: necessity inductive hypothesis} is empty.
For $i \ge 2$, since $j+k \le i+1$,
we obtain the following matrix partition:
\begin{equation}\label{eqn: necessity partition S}
S_j|_{j+k\sim\nsharp}^k
= \begin{bmatrix}
S_j|_{j+k\sim i}^k \\ S_j|_{i+1\sim\nsharp}^k
\end{bmatrix},
\quad k = 1, 2, \dots, i+1-j, \ j = 2, 3, \dots, i.
\end{equation}
It should be noted that
the submatrix $S_j|_{j+k\sim i}^k$ in \eqref{eqn: necessity partition S} is empty when $j+k=i+1$.
From \eqref{eqn: necessity partition S}, it follows that
\[
S_j|_{i+1\sim\nsharp}^k = 0,
\quad k = 1, 2, \dots, i+1-j, \ j = 2, 3, \dots, i.
\]
Arranging $S_j|_{i+1\sim\nsharp}^k$ in a row along the index $k$ yields
\[
S_j|_{i+1\sim\nsharp}^{1\sim i+1-j}
= \begin{bmatrix}
S_j|_{i+1\sim\nsharp}^1 & S_j|_{i+1\sim\nsharp}^2 & \cdots & S_j|_{i+1\sim\nsharp}^{i+1-j}
\end{bmatrix}
= 0, \quad j = 2, 3, \dots, i.
\]
By virtue of \eqref{eqn: S=S big}, the matrix $S_j|_{i+1\sim\nsharp}$ can be expressed as
\begin{equation}\label{eqn:Sj...2}
S_j|_{i+1\sim\nsharp}
= \begin{bmatrix}
\multirow{2}{*}{$0|_{i+1\sim\nsharp}^{1\sim i+1-j}$}
& S_j|^{i+2-j}_{i+1}
& \multirow{2}{*}{$S_j|_{i+1\sim\nsharp}^{i+3-j\sim\nsharp}$} \\
& S_j|_{i+2\sim\nsharp}^{i+2-j} &
\end{bmatrix},
\quad j = 2, 3, \dots, i+1.
\end{equation}
Applying the inductive hypothesis \eqref{eqn: necessity inductive hypothesis}, we have
\begin{equation}\label{eqn:R1...2}
R_1|_{j\sim\nsharp} C \Ao^{i+1-j} B =
\begin{bmatrix}
H_{i+2-j} \\ 0|_{i+3-j\sim\nsharp}
\end{bmatrix},
\quad j = 2, 3, \dots, i+1.
\end{equation}
By substituting \eqref{eqn:Sj...2} and \eqref{eqn:R1...2} into \eqref{eqn: TCAB+[S big]TCAB=F 0}
and treating terms $R_1|_{1\sim j-1} C \Ao^{i+1-j} B$ and $S_j|_{i+1\sim\nsharp}^{i+3-j\sim\nsharp}$ as ``$*$'',
the equation \eqref{eqn: TCAB+[S big]TCAB=F 0} for the case $i+1$ becomes
\begin{gather*}
R_1|_{i+1\sim\nsharp} C \Ao^i B
+ \Sigma_{j=2}^{i+1}
\begin{bmatrix}
\multirow{2}{*}{$0|_{i+1\sim\nsharp}^{1\sim i+1-j}$}
& S_j|^{i+2-j}_{i+1}
& \multirow{2}{*}{$*|_{i+1\sim\nsharp}^{i+3-j\sim\nsharp}$} \\
& S_j|_{i+2\sim\nsharp}^{i+2-j} &
\end{bmatrix} \\
{}\times
\begin{bmatrix}
*|_{1\sim i+1-j} \\
H_{i+2-j} \\
0|_{i+3-j\sim\nsharp}
\end{bmatrix}
= \begin{bmatrix} F_{i+1} \\ 0|_{i+2\sim\nsharp} \end{bmatrix}.
\end{gather*}
This leads to
\[
R_1|_{i+1\sim\nsharp} C \Ao^i B + \Sigma_{j=2}^{i+1}
\begin{bmatrix}
S_j|^{i+2-j}_{i+1} H_{i+2-j} \\
S_j|_{i+2\sim\nsharp}^{i+2-j} H_{i+2-j}
\end{bmatrix}
= \begin{bmatrix} F_{i+1} \\ 0|_{i+2\sim\nsharp} \end{bmatrix},
\]
and therefore
\[
R_1|_{i+1\sim\nsharp} C \Ao^i B
= \begin{bmatrix}
F_{i+1} - \Sigma_{j=2}^{i+1} S_j|^{i+2-j}_{i+1} H_{i+2-j} \\
- \Sigma_{j=2}^{i+1} S_j|_{i+2\sim\nsharp}^{i+2-j} H_{i+2-j}
\end{bmatrix}.
\]
Recalling the recursive definition of $H_i$ in \eqref{eqn: define H_i} for $i+1$:
\[
H_{i+1} = F_{i+1} - \Sigma_{j=2}^{i+1} S_j|^{i+2-j}_{i+1} H_{i+2-j},
\]
it follows that
\[
R_1|_{i+1\sim\nsharp} C \Ao^i B
= \begin{bmatrix}
H_{i+1} \\
- \Sigma_{j=2}^{i+1} S_j|_{i+2\sim\nsharp}^{i+2-j} H_{i+2-j}
\end{bmatrix}.
\]
Incorporating the inductive hypothesis \eqref{eqn: necessity inductive hypothesis}, we obtain
\begin{align*}
&\Qoc^{[i+1]}(\Ao, B, R_1C)
= \begin{bmatrix} R_1CB & R_1C\Ao B & \cdots & R_1C\Ao^{i-1}B & R_1C\Ao^iB \end{bmatrix} \\
&=
\def\entryIi{\Block[draw]{5-1}{R_1|_{1\sim\nsharp}CB}}
\def\entryIIi{\Block[draw]{4-1}{R_1|_{2\sim\nsharp}C\Ao B}}
\def\entryIViv{\Block[draw]{2-1}{R_1|_{i\sim\nsharp}C\Ao^{i-1}B}}
\def\entryVv{\Block[draw]{}{R_1|_{i+1\sim\nsharp}C\Ao^iB}}
\setlength{\extrarowheight}{1mm}
\left[\begin{NiceArray}{ccccc}[margin,rules/color=black,no-cell-nodes]
\entryIi & * & \cdots & * & * \\
         & \entryIIi & \ddots & \vdots     & \vdots   \\
         &           & \ddots & * & * \\
         &           & \cdots & \entryIViv & * \\
         &           & \cdots &            & \entryVv
\end{NiceArray}\right]\\
&=
\begin{bmatrix}
H_1 & * & * & \cdots & * \\
& H_2 & * & \cdots & * \\
& & \ddots & \ddots & \vdots \\
& & & H_i & * \\
& & & & H_{i+1} \\
& & & & - \Sigma_{j=2}^{i+1} S_j|_{i+2\sim\nsharp}^{i+2-j} H_{i+2-j}
\end{bmatrix}.
\end{align*}
By Lemma~\ref{thm: image cap=0}, it follows that
$- \Sigma_{j=2}^{i+1} S_j|_{i+2\sim\nsharp}^{i+2-j} H_{i+2-j} = 0$,
which implies
\begin{equation}\label{eqn: necessity Ti+1~nCAB new}
R_1|_{i+1\sim\nsharp} C \Ao^{i} B
= \begin{bmatrix} H_{i+1} \\ 0|_{i+2\sim\nsharp} \end{bmatrix}.
\end{equation}
Combining \eqref{eqn: necessity Ti+1~nCAB new} with the inductive hypothesis
\eqref{eqn: necessity inductive hypothesis} yields
\begin{equation}\label{eqn: necessity case i+1 TCAB}
R_1|_{j\sim\nsharp} C \Ao^{j-1} B
= \begin{bmatrix} H_j \\ 0|_{j+1\sim\nsharp} \end{bmatrix},
\quad j = 1, 2, \dots, i+1.
\end{equation}
Since $H_j$ for $j = 1, 2, \dots, i$ all possess full row rank,
we deduce that
\begin{equation}\label{eqn: necessary S new=0}
S_j|^{i+2-j}_{i+2\sim\nsharp} = 0, \quad j = 2, 3, \dots, i+1.
\end{equation}
Integrating \eqref{eqn: necessary S new=0} with the inductive hypothesis
\eqref{eqn: necessity inductive hypothesis} results in
\begin{equation}\label{eqn: necessity case i+1 S}
S_j|_{j+k\sim\nsharp}^k = 0, \quad k = 1, 2, \dots, i+2-j, \ j = 2, 3, \dots, i+1.
\end{equation}
By \eqref{eqn: necessary S new=0}, \eqref{eqn: necessity case i+1 S} and
the principle of mathematical induction, the hypothesis
\eqref{eqn: necessity inductive hypothesis} holds for all $i = 1, 2, \dots, \nsharp-1$.
Finally, substituting $i = \nsharp-1$ into the second equation of
\eqref{eqn: necessity inductive hypothesis} gives
\[
S_j|_{j+k\sim\nsharp}^k = 0, \quad
k = 1, 2, \dots, \nsharp-j, \
j = 2, 3, \dots, \nsharp-1,
\]
which is precisely the condition \eqref{eqn: rewrite main result}.
This completes the proof of the necessity of Theorem~\ref{thm: theorem}.

\section{Conclusion and Future Works}

This paper characterizes the set of all possible Luenberger gains $L$ and output transformation matrices $T_y$ that enable an invertible linear system to possess a regular relative degree.
The proposed determination method is straightforward and computationally efficient, as the constraints on the Luenberger gain $L$ are derived by solving a sequence of linear matrix equations.
Furthermore, it is established that the set of all admissible Luenberger gains $L$ constitutes a linear manifold.
In conjunction with \cite{Cao2025rrdref[J]}, these results demonstrate that the row-echelon form is a suitable mathematical tool for investigating the regular relative degree problem in linear time-invariant systems.
Future research will focus on analyzing the degrees of freedom of the solution space associated with the Luenberger gain $L$ and exploring its structural properties.

\bibliography{mybibfile}

@article{Chen21,
	author = {Y. Chen and W. Respondek},
	title = {From {M}orse triangular form of {ODE} control systems
				to feedback canonical form of {DAE} control systems},
	volume = {358},
	year = {2021},
	pages = {8556--8592},
	journal = {Journal of the Franklin Institute},}

@article{Kucera20,
	author = {V. Ku{\v c}era},
	title = {Stability-preserving {M}orse normal form},
	volume = {65},
	number = {12},
	year = {2020},
	pages = {5099--5113},
	journal = {IEEE Transactions on Automatic Control},}

@article{Sepitka25,
	author = {P. {\v S}epitka and R. {\v S}. Hilscher},
	title = {New existence results for conjoined bases of
				singular linear {H}amiltonian systems with given
				{S}turmian properties},
	volume = {707},
	year = {2025},
	pages = {187--224},
	journal = {Linear Algebra and Its Applications},}

@article{Nicolau25,
	author = {F. Nicolau and H. Mounier and {S.-I.} Niculescu},
	title = {Interplay between discretization and controllability
				of linear delay systems: An algebraic viewpoint},
	volume = {713},
	year = {2025},
	pages = {18--73},
	journal = {Linear Algebra and Its Applications},}

@article{Kouri23,
	author = {D. P. Kouri and Z. Hua and M. Udell},
	title = {A greedy {G}alerkin method to efficiently select
				sensors for linear dynamical systems},
	volume = {679},
	year = {2023},
	pages = {275--304},
	journal = {Linear Algebra and Its Applications},}

@article{Guo,
	author = {P. Guo and B. Zhou and Z. Li and L. Zhao},
	title = {On the existence of a regular relative degree of linear time-varying systems by output transformations},
	journal = {Linear Algebra and Its Applications},
	note = {\ (submitted for publication)}
}

@article{Cao2025rrdref[J],
	author = {S. Cao and B. Zhou and G. Duan},
	title = {Echelon form criterion for the existence of a regular relative degree of linear systems},
	journal = {IEEE Control Systems Letters},
	note = {\ (in press)},
	year = {2026},
	doi = {10.1109/LCSYS.2026.3667764}
}

@article{Chen2021,
	author = {Y. Chen and W. Respondek},
	title = {From {M}orse triangular form of {ODE} control systems to feedback canonical form of {DAE} control systems},
	volume = {16},
	year = {2021},
	pages = {8556--8592},
	journal = {Journal of the Franklin Institute},}

@article{Dragan1987,
	author = {V. Dragan and A. Halanay},
	title = {High-gain feedback stabilization of linear systems.},
	volume = {45},
	number = {2},
	year = {1987},
	pages = {549--577},
	journal = {International Journal of Control},}

@article{Fomichev2017DE[J],
	author = {V. V. Fomichev and A. V. Kraev and A. I. Rogovskiy},
	title = {Reduction of systems to a form with relative degree using dynamic output transformation},
	volume = {53},
	number = {5},
	year = {2017},
	pages = {686--700},
	journal = {Differential Equations},
}

@article{Jordan1977,
	author = {D. Jordan and L. Godbout},
	title = {On computation of the canonical pencil of a linear system},
	volume = {22},
	number = {1},
	year = {1977},
	pages = {126--128},
	journal = {IEEE Transactions on Automatic Control},
}

@article{Kucera2020TAC[J],
	author = {V. Ku{\v c}era},
	title = {Stability-preserving {M}orse normal form},
	volume = {65},
	number = {12},
	year = {2020},
	pages = {5099--5113},
	journal = {IEEE Transactions on Automatic Control},}

@conference{Kucera2019ECC[C],
	author = {V. Ku{\v c}era},
	title = {An alternative proof of the {K}ronecker/{M}orse normal form},
	address = {Napoli, Italy},
	year = {2019},
	pages = {3809--3816},
	booktitle = {European Control Conference},}

@conference{Kitapci1984CDC[C],
	author = {A. Kitapci and L. M. Silverman},
	title = {Determination of {M}orses canonical form using the structure algorithm},
	address = {Las Vegas, NV, USA},
	year = {1984},
	pages = {1752--1757},
	booktitle = {IEEE Conference on Decision and Control},}

@article{Morse1973SIAM[J],
	author = {A. S. Morse},
	title = {Structural invariants of linear multivariable systems},
	volume = {11},
	number = {3},
	year = {1973},
	pages = {446--465},
	journal = {SIAM Journal on Control},
}

@article{Mueller2009[J],
	author = {M. Mueller},
	title = {Normal form for linear systems with respect to its vector relative degree},
	volume = {430},
	number = {4},
	year = {2009},
	pages = {1292--1312},
	journal = {Linear Algebra and Its Applications},
}

@article{Patil2021,
	author = {M. Patil and B. Bandyopadhyay and D. Khimani and M. Toe},
	title = {Robust output tracking for the non-minimum phase over-actuated systems},
	volume = {131},
	year = {2021},
	pages = {109726},
	journal = {Automatica},}

@article{Rospondek2017,
	author = {W. Rospondek},
	title = {Right and left invertibility of nonlinear control systems},
	year = {2017},
	pages = {133--176},
	journal = {Nonlinear controllability and optimal
control},
}

@article{Sannuti1987,
	author = {P. Sannuti and A. Saberi},
	title = {Special coordinate basis for multivariable linear systems---finite and infinite zero structure, squaring down
and decoupling},
	volume = {45},
	number = {5},
	year = {1987},
	pages = {1655--1704},
	journal = {International Journal of Control},
}

@article{Suda1994,
	author = {N. Suda},
	title = {An elementary derivation of {K}ronecker canonical form for linear time-invariant systems},
	volume = {27},
	number = {9},
	year = {1994},
	pages = {73-76},
	journal = {IFAC Proceedings Volumes},
}

@article{Sain1969TAC[J],
	author = {Sain, M. and Massey, J.},
	title = {Invertibility of linear time-invariant dynamical systems},
	volume = {14},
	number = {2},
	year = {1969},
	pages = {141--149},
	journal = {IEEE Transactions on Automatic Control},
}

@article{Thorp1973,
	author = {J. S. Thorp},
	title = {The singular pencil of a linear dynamical system},
	volume = {18},
	number = {3},
	year = {1973},
	pages = {577--596},
	journal = {International Journal of Control},}

@article{vanDooren1979LAA[J],
	author = {P. {Van Dooren}},
	title = {The computation of {K}ronecker's canonical form of a singular pencil},
	volume = {27},
	year = {1979},
	pages = {103--140},
	journal = {Linear Algebra and Its Applications},
}

@article{Zhou2023Auto[J],
	author = {B. Zhou},
	title = {On the relative degree and normal forms of linear systems by
	output transformation with applications to tracking},
	volume = {148},
	number = {110800},
	year = {2023},
	journal = {Automatica},
}

@book{Zhoubook,
	author = {B. Zhou},
	title = {Linear Systems Theory},
	note = {(in Chinese)},
	publisher = {Science Press},
	location = {Beijing},
	year = {2024},
}

@article{Zhou2024IJC[J],
	author = {B. Zhou},
	title = {On the invertibility indices and the {M}orse normal form for linear multivariable systems},
	volume = {97},
	number = {6},
	year = {2024},
	journal = {International Journal of Control},
}

\section*{Appendix}

{\renewcommand\proofname{Proof for
  \Cref{thm: invertible and Morse form}}
\begin{proof}
It follows from \eqref{eqn: rank Qi A+LC} and \eqref{eqn: rank Qi TxTuTy} that
the output injection and the algebraic equivalence transformation don't change the rank of the invertibility matrix.
With the fact that a system with a regular relative degree is invertible,
the necessity of this lemma can be easily obtained.

Then we prove the sufficiency.
From Lemma~\ref{thm: Morse normal form}, we know that there exist
$\tilde T_x\in\R^{n\times n}_n$,
$\tilde T_u\in\R^{m\times m}_m$,
$\tilde T_y\in\R^{m\times m}_m$,
$\tilde K\in\R^{m\times n}$ and
$L\in\R^{n\times m}$ such that
$\tilde A=\tilde T_x(A+B\tilde K+LC)\tilde T_x^{-1}$,
$\tilde B=\tilde T_xB\tilde T_u^{-1}$,
$\tilde C=\tilde T_yC\tilde T_x^{-1}$
has the form \eqref{eqn: Morse normal form's ABC}.
Thus, $A+LC=\tilde T_x^{-1}\tilde A\tilde T_x
-\tilde T_x^{-1}\tilde B\tilde T_u\tilde K$,
$B=\tilde T_x^{-1}\tilde B\tilde T_u$,
$C=\tilde T_y^{-1}\tilde C\tilde T_x$.
After the state feedback $\tilde u=\tilde Kx+u$, the system
$(A+LC,B,C)$
becomes $(\tilde T_x^{-1}\tilde A\tilde T_x,
\tilde T_x^{-1}\tilde B\tilde T_u,
\tilde T_y^{-1}\tilde C\tilde T_x)$.
After the algebraic equivalent transformation $\tilde x=\tilde T_x^{-1}x$,
$\tilde u=\tilde T_u^{-1}u$ and $\tilde y=\tilde T_y^{-1}y$,
the system $(\tilde T_x^{-1}\tilde A\tilde T_x,
\tilde T_x^{-1}\tilde B\tilde T_u,
\tilde T_y^{-1}\tilde C\tilde T_x)$ becomes $(\tilde A,\tilde B,\tilde C)$.
These imply that the system $(A+LC,B,C)$ can be converted into the system
$(\tilde A,\tilde B,\tilde C)$ through a state feedback and an algebraic equivalent transformation.
From \eqref{eqn: rank Qi A+BK}, \eqref{eqn: rank Qoc A+BK},
\eqref{eqn: rank Qi TxTuTy} and \eqref{eqn: rank Qoc TxTuTy},
for $i=1,2,\dots,n$, there holds
\begin{equation}\label{eqn: A+LC,BC=tilde ABC}
\begin{gathered}
\Qi^{[i]}(A+LC,B,C)=\Qi^{[i]}(\tilde A,\tilde B,\tilde C),\\
\Qoc^{[i]}(A+LC,B,C)=\Qoc^{[i]}(\tilde A,\tilde B,\tilde C).
\end{gathered}
\end{equation}
It follows from \eqref{eqn: Morse normal form's ABC} that
$\tilde C\tilde A^i\tilde B=C_3A_3^iB_3$ for $i=0,1,\dots,n-1$.
Hence, for $i=1,2,\dots,n$, we have
\begin{equation}\label{eqn: ABC=A_3B_3C_3}
\Qi^{[i]}(\tilde A,\tilde B,\tilde C)=\Qi^{[i]}(A_3,B_3,C_3),\
\Qoc^{[i]}(\tilde A,\tilde B,\tilde C)=\Qoc^{[i]}(A_3,B_3,C_3).
\end{equation}
One can easily verify that the system $(A_3,B_3,C_3)$ satisfies
\eqref{eqn: Zhou2023(8)(9)}.
Then, it follows from \eqref{eqn: A+LC,BC=tilde ABC} and \eqref{eqn: ABC=A_3B_3C_3} that
the system $(A+LC,B,C)$ satisfies \eqref{eqn: Zhou2023(8)(9)}.
From Lemma~\ref{thm: Zhou2023(8)(9)},
there exists a $T_y\in\R^{m\times m}_m$ such that the system
$(A+LC,B,T_yC)$ has a regular relative degree.
This finishes the proof.
\end{proof}}

{\renewcommand\proofname{Proof for \Cref{thm: ei=d'i}}
\begin{proof}
Proof for Item~1:
For $i=1,2,\dots,n-1$, on one hand, \eqref{eqn:rrefDniswhat} can be rewritten as
\[
\rref(D^{[n]})
=\left[\begin{array}{ccc|ccc}
E_1&\cdots&*&*&\cdots&*\\
&\ddots&\vdots&\vdots&\ddots&\vdots\\
&&E_i&*&\cdots&*\\ \hline
&&&E_{i+1}&\cdots&*\\
&&&&\ddots&\vdots\\
&&&&&E_{n}\\
\multicolumn{6}{c}{0}
\end{array}\right].
\]
On the other hand, the reduced row-echelon of $D^{[n]}$ can be written as
\[
\rref(D^{[n]})
=\rref\begin{bmatrix}&D^{[n-i]}\\ D^{[i]}&*\end{bmatrix}
=\rref\begin{bmatrix}D^{[i]}&*\\ &D^{[n-i]}\end{bmatrix}.
\]
With the property of the reduced row-echelon form,
we know that \eqref{eqn: Dirref=E1~i} holds.

Proof for Item~2:
From the \Cref{thm: Drref=E1~i} of \Cref{thm: ei=d'i}, we know that
$d_i=\Sigma_{j=1}^ie_j$ for $i=1,2,\dots,n$.
Thus, $e_i=d_i'$.

Proof for Item~3:
For $i=2,\dots,n$, suppose that $P_i\in\R^{mi\times mi}_{mi}$
is the matrix transforming $D^{[i]}$
to its reduced row-echelon form,
that is, $P_iD^{[i]}=\rref(D^{[i]})$.
Note that
$
D^{[i]}=\begin{bmatrix}&D^{[i-1]}\\ D_1&*\end{bmatrix}
$.
Thus
\begin{align}\label{eqn: E1<E2<...<En}
\begin{bmatrix}&I_m\\ P_{i-1}&\end{bmatrix}D^{[i]}
\notag&=\begin{bmatrix}&I_m\\ P_{i-1}&\end{bmatrix}
\begin{bmatrix}&D^{[i-1]}\\ D_1&*\end{bmatrix}
=\begin{bmatrix}D_1&*\\ &P_{i-1}D^{[i-1]}\end{bmatrix}\\
&=\begin{bmatrix}D_1&*\\ &\rref(D^{[i-1]})\end{bmatrix}
=\begin{bmatrix}
D_1&*&\cdots&*\\
&E_1&\cdots&*\\
&&\ddots&\vdots\\
&&&E_{i-1}\\
\multicolumn{4}{c}{0}
\end{bmatrix}.
\end{align}
By combining \eqref{eqn: Dirref=E1~i} with \eqref{eqn: E1<E2<...<En},
we have $\imr{E_{i-1}}\subseteq\imr{E_i}$.

Proof for Item~4:
It follows from the \Cref{item:E1<En} and $e_i=\im(E_i^\T),i=1,2,\dots,n$ that
$e_1\le e_2\le\cdots\le e_n$.
With the Item~2, we know that this item holds.
This finishes the proof.
\end{proof}}

{\renewcommand\proofname{Proof for \Cref{thm: QD=F}}
\begin{proof}
Proof for Item 1:
For $j=1,2,\dots,i$, since $\imr{F_j}\subseteq\imr{E_j}$,
there exists coefficient matrices
$K_j\in\R^{f_j\times d'_j}$ such that $F_j=K_jE_j$.
Denote $K=K_1\oplus K_2\oplus\cdots\oplus K_i$.
With the \Cref{thm: Drref=E1~i} of \Cref{thm: ei=d'i}, there
exists a $Q_{\rref}\in\R^{d_i\times mi}$ such that
\[
Q_{\rref}D^{[i]}
=\rref(D^{[i]})
=\begin{bmatrix}
E_1&*&\cdots&*\\
&E_2&\cdots&*\\
&&\ddots&\vdots\\
&&&E_i
\end{bmatrix}.
\]
Therefore,
\[
KQ_{\rref}D^{[i]}=
\begin{bmatrix}
K_1E_1&*&\cdots&*\\
&K_2E_2&\cdots&*\\
&&\ddots&\vdots\\
&&&K_iE_i
\end{bmatrix}
=\begin{bmatrix}
F_1&*&\cdots&*\\
&F_2&\cdots&*\\
&&\ddots&\vdots\\
&&&F_i
\end{bmatrix}.
\]
We can just choose $Q=KQ_{\rref}$.

Proof for Item 2:
We will use the following simple result of the linear algebra theory:
\begin{equation*}
\begin{minipage}{\linewidth}
Suppose that $\mathcal V_1,\mathcal V_2$ are two subspaces of $\R^m$
satisfying $\mathcal V_1\subseteq\mathcal V_2$.
For two matrices $X_1\in\R^{m\times\dim(\mathcal V_1)}_{\dim(\mathcal V_1)}$,
$X_2\in\R^{m\times[\dim(\mathcal V_2)-\dim(\mathcal V_1)]}_{\dim(\mathcal V_2)-\dim(\mathcal V_1)}$,
if $\im(X_1)\subseteq\mathcal V_1$ and $\im(X_2)\backslash\{0\}\subseteq\mathcal V_2\backslash\mathcal V_1$,
then $\im\begin{bmatrix}X_1&X_2\end{bmatrix}=\mathcal V_2$.
\end{minipage}
\end{equation*}
With the above fact,
using the induction method,
there holds the following fact:
\begin{equation*}
\begin{minipage}{\linewidth}
For $i\ge2$,
suppose that $\mathcal V_0=\{0\}$ and $\mathcal V_1,\mathcal V_2,\dots,\mathcal V_i$ are a sequence of subspaces of $\R^m$
satisfying $\mathcal V_1\subseteq\mathcal V_2\subseteq\cdots\subseteq\mathcal V_i$.
For a sequence of matrices $X_j\in
\R^{m\times\nabla\!\dim(\mathcal V_j)},j=1,2,\dots,i$,
if $\im(X_j)\backslash\{0\}\subseteq\mathcal V_j\backslash\mathcal V_{j-1}$
for $j=1,2,\dots,i$,
then $\im\begin{bmatrix}X_1&X_2&\cdots&X_i\end{bmatrix}=\mathcal V_i$.
\end{minipage}
\end{equation*}
With the above fact,
one can easily prove this item.

Proof for Item 3:
The proof is divided into three steps.

\emph{Step 1:}
To prove that $  Q_1$ is of full row rank,
we just need to prove that, for a
coefficient row vector $k\in\R^{1\times  f}$,
if $k  Q_1=0$, then $k=0$.
Suppose that $Q=\begin{bmatrix}\bar Q_1&Q_1\end{bmatrix}$, $k  Q_1=0$ and
let $k=\begin{bmatrix}k_1&k_2&\cdots&k_i\end{bmatrix}$
where $k_j\in\R^{1\times  f_j}$ for $j=1,2,\dots,i$.
On one hand, with $k  Q_1=0$, we get
\begin{equation}\label{eqn: (kQ)D=?}
\begin{aligned}
kQD^{[i]}
&=k\begin{bmatrix}\bar Q_1&Q_1\end{bmatrix}D^{[i]}
=\begin{bmatrix}k\bar Q_1&kQ_1\end{bmatrix}D^{[i]}\\
&=\begin{bmatrix}k\bar Q_1&0\end{bmatrix}
\begin{bmatrix}&D^{[n-1]}\\ D_1&*\end{bmatrix}
=\begin{bmatrix}0&k\bar Q_1  D^{[n-1]}\end{bmatrix}.
\end{aligned}
\end{equation}
On the other hand, with the \Cref{thm:existQ}, we have
\begin{equation}\label{eqn: k(QD)=?}
k(QD^{[i]})=
\begin{bmatrix}k_1&*\end{bmatrix}
\left[\begin{array}{c|ccc}
F_1&*&\cdots&*\\
\hline
&F_2&\cdots&*\\
&&\ddots&\vdots\\
&&&F_i
\end{array}\right]
=\begin{bmatrix}k_1F_1&*\end{bmatrix}.
\end{equation}
By comparing \eqref{eqn: (kQ)D=?} and \eqref{eqn: k(QD)=?},
we know that $k_1F_1=0$.

\emph{Step 2:}
If $i=1$, then this Item is proven.
In the sequel, we consider the cases $2\le i\le n$.
By noting that $F_1$ is of full row rank, we have $k_1=0$.
Since the \Cref{thm: Drref=E1~i} of \Cref{thm: ei=d'i} reveals
the reduced row-echelon form of $D^{[i-1]}$,
there exists a nonsingular matrix $Q_{\rref}^{[i-1]}\in\R^{m(i-1)\times m(i-1)}_{m(i-1)}$ such that
\begin{equation}\label{eqn: Qtilde D[i-1]=rref D[i-1]}
Q_{\rref}^{[i-1]}D^{[i-1]}
=\rref(D^{[i-1]})
=\begin{bmatrix}
E_1&\cdots&*\\
&\ddots&\vdots\\
&&E_{i-1}\\
&0&
\end{bmatrix}.
\end{equation}
Define $l\in\R^{1\times d_{i-1}}$ as $k\bar Q_1(Q_{\rref}^{[i-1]})^{-1}=
\begin{bmatrix}l&*\end{bmatrix}$.
Then from \eqref{eqn: Qtilde D[i-1]=rref D[i-1]}, we can obtain
\begin{align*}
k\bar Q_1D^{[i-1]}
&=[k\bar Q_1(Q_{\rref}^{[i-1]})^{-1}](Q_{\rref}^{[i-1]}D^{[i-1]})
\\
&
=\begin{bmatrix}l&*\end{bmatrix}
\begin{bNiceMatrix}[margin=3.5pt]
E_1 & \cdots & * \\
    & \ddots & \vdots \\
    &        & E_{i-1} \\
\Hline
    & 0      &
\end{bNiceMatrix}
=l\begin{bmatrix}
E_1&\cdots&*\\
&\ddots&\vdots\\
&&E_{i-1}
\end{bmatrix}.
\end{align*}
Let $l=\begin{bmatrix}l_1&l_2\cdots&l_{i-1}\end{bmatrix}$
where $l_j\in\R^{1\times d'_j}$ for $j=1,2,\dots,i-1$.
Then we have
\begin{equation}\label{eqn: kQbarD[n-1]=lE1~i-1}
k\bar Q_1D^{[n-1]}
=\begin{bmatrix}l_1&\cdots&l_{i-1}\end{bmatrix}
\begin{bmatrix}
E_1&\cdots&*\\
&\ddots&\vdots\\
&&E_{i-1}
\end{bmatrix}.
\end{equation}
By recalling \eqref{eqn: k(QD)=?}, with $k_1=0$, we have
\begin{equation}\label{eqn: kQD=[0 kF2~i]}
\begin{aligned}
k(QD^{[i]})
&=\left[\begin{array}{c|ccc}0&k_2&\cdots&k_i\end{array}\right]
\left[\begin{array}{c|ccc}
F_1&*&\cdots&*\\
\hline
&F_2&\cdots&*\\
&&\ddots&\vdots\\
&&&F_i
\end{array}\right]\\
&=\begin{bmatrix}0&
\begin{bmatrix}
k_2&\cdots&k_i
\end{bmatrix}
\begin{bmatrix}
F_2&\cdots&*\\
&\ddots&\vdots\\
&&F_i
\end{bmatrix}
\end{bmatrix}.
\end{aligned}
\end{equation}
It follows from \eqref{eqn: (kQ)D=?}, \eqref{eqn: kQbarD[n-1]=lE1~i-1}
and \eqref{eqn: kQD=[0 kF2~i]} that
\begin{equation}\label{eqn: kF2~i=lE1~i-1}
\begin{bmatrix}k_2&\cdots&k_i\end{bmatrix}
\begin{bmatrix}
F_2&\cdots&*\\
&\ddots&\vdots\\
&&F_i
\end{bmatrix}
=\begin{bmatrix}l_1&\cdots&l_{i-1}\end{bmatrix}
\begin{bmatrix}
E_1&\cdots&*\\
&\ddots&\vdots\\
&&E_{i-1}
\end{bmatrix}.
\end{equation}
By rewriting \eqref{eqn: kF2~i=lE1~i-1} as
$
\begin{bmatrix}k_2&*\end{bmatrix}
\begin{bmatrix}F_2&*\\ &*\end{bmatrix}
=\begin{bmatrix}l_1&*\end{bmatrix}
\begin{bmatrix}E_1&*\\ &*\end{bmatrix}
$,
there holds $\begin{bmatrix}k_2F_2&*\end{bmatrix}
=\begin{bmatrix}l_1E_1&*\end{bmatrix}$.
Thus, $k_2F_2=l_1E_1$.
Since $\imr{F_2}\cap\imr{E_1}=\{0\}$,
there holds $k_2F_2=l_1E_1=0$.
Since $F_2$ and $E_1$ are both of full row rank,
it follows that $k_2=0$ and $l_1=0$.
This finishes the proof for case $i=2$.

\emph{Step 3:}
For cases $i\ge3$,
we make the inductive hypothesis that, for $j=2,3,\dots,i-1$,
there holds
\begin{equation}\label{eqn: inductive hypothesis k2~j=0 l1~j-1=0}
k_2=0,\ k_3=0,\ \cdots,\ k_j=0
\text{ and }l_1=0,\ l_2=0,\ \cdots,\ l_{j-1}=0.
\end{equation}
Then from \eqref{eqn: kF2~i=lE1~i-1}, we can get
\[
\begin{bmatrix}k_{j+1}&\cdots&k_i\end{bmatrix}
\begin{bmatrix}
F_{j+1}&\cdots&*\\
&\ddots&\vdots\\
&&F_i
\end{bmatrix}
=\begin{bmatrix}l_j&\cdots&l_{i-1}\end{bmatrix}
\begin{bmatrix}
E_j&\cdots&*\\
&\ddots&\vdots\\
&&E_{i-1}
\end{bmatrix}.
\]
The above equation can be rewritten as
$
\begin{bmatrix}k_{j+1}&*\end{bmatrix}
\begin{bmatrix}F_{j+1}&*\\ &*\end{bmatrix}
=\begin{bmatrix}l_j&*\end{bmatrix}
\begin{bmatrix}E_j&*\\ &*\end{bmatrix}
$.
Thus $\begin{bmatrix}k_{j+1}F_{j+1}&*\end{bmatrix}
=\begin{bmatrix}l_jE_j&*\end{bmatrix}$ and then we have
$k_{j+1}F_{j+1}=l_jE_j$.
Since $\imr{F_{j+1}}\cap\imr{E_j}=\{0\}$,
there holds $k_{j+1}F_{j+1}=l_jE_j=0$.
Since $F_{j+1}$ and $E_j$ are both of full row rank,
it follows that $k_{j+1}=0$ and $l_j=0$.
Together with the inductive hypothesis
\eqref{eqn: inductive hypothesis k2~j=0 l1~j-1=0}, there hold
\[
k_2=0,\ k_3=0,\ \dots,\ k_{j+1}=0
\text{ and }l_1=0,\ l_2=0,\ \dots,\ l_j=0.
\]
By using the induction method,
it follows that $k_2=0$, $k_3=0$, $\dots$, $k_i=0$.
Therefore, we can conclude that $k=0$.
This finishes the proof.
\end{proof}}

{\renewcommand\proofname{Proof for Remark~\ref{rmk: n* and nsharp}}
\begin{proof}
Proof for Item~1:
It follows from \eqref{eqn: Zhou2023(8)(9)} that
$\rank(\Qi^{[i]})=\Sigma_{j=1}^i\rank(\Qoc^{[j]})$ for
$i=1,2,\dots,n$.
Then, $\faii^{[i]}=\rank(\Qoc^{[i]})$ for $i=1,2,\dots,n$.
With the definitions of $\nsharp$ and $\nocstar$, it follows that $\nsharp=\nocstar$.

Proof for Item~2:
It follows from \eqref{eqn: rank Qi A+LC} that
$\nsharp(A+LC,B,C)=\nsharp(A,B,C)$.
Since the system $(A+LC,B,C)$ satisfies \eqref{eqn: Zhou2023(8)(9)},
then from the Item 1,
we have $\nsharp(A+LC,B,C)=\nocstar(A+LC,B,C)$.
Thus $\nocstar(A+LC,B,C)=\nsharp(A,B,C)$.

Proof for Item~3:
To prove $\nsharp\ge\nocstar$, we just need to prove the fact:
\[
\begin{minipage}{\linewidth}
For $i=1,2,\dots,n$, if
$\faii^{[i]}=m$,
then $\rank(\Qoc^{[i]})=m$.
\end{minipage}
\]
Due to the definition of $\Qi^{[i]}$,
we get
$\rank(\Qi^{[i]})\le\rank(\Qi^{[i-1]})+\rank(\Qoc^{[i]})$.
Thus, $\rank(\Qoc^{[i]})\ge m$.
Since the row number of $\Qoc^{[i]}$ is $m$,
we know $\rank(\Qoc^{[i]})=m$.
This finishes the proof of the above fact.

Proof for Item~4 (\Cref{thm: if nsharp=2 then n*=2}):
From Item~3, if $\nsharp=2$, then necessarily $\nocstar\in\{1,2\}$.
Moreover, since $\Qoc^{[1]}=\Qi^{[1]}$, the case $\nocstar=1$ implies $\nsharp=1$.
Hence, $\nocstar=2$.
This finishes the proof.
\end{proof}}

\end{document}